\newcommand{\bZ}{{\mathbb Z}}
\newcommand{\bC}{{\mathbb C}}
\newcommand{\bQ}{{\mathbb Q}}
\newcommand{\bT}{{\mathbb T}}
\newcommand{\bG}{{\mathbb G}}
\newtheorem{thm}{Theorem}[section]
\newtheorem{lemma}[thm]{Lemma}
\newtheorem{cor}[thm]{Corollary}
\numberwithin{equation}{section}
\begin{document}

\title[K-theories  of  Rost motives]{ Kunneth formula
for graded rings associated to $K$-theories of Rost motives}
 
\author{Nobuaki Yagita}

\address{ faculty of Education, 
Ibaraki University,
Mito, Ibaraki, Japan}
 
\email{nobuaki.yagita.math@vc.ibaraki.ac.jp, }

\keywords{ Kunneth formula, Rost motive,
Chow ring, Morava $K$-theory, versal torsor, twisted flag variety}
\subjclass[2010]{ 57T15, 20G15, 14C15}

\maketitle

\begin{abstract}
In this paper, we study the graded ring $gr^*(X)$
defined by $K$-theory of a twist flag variety $X$.
In particular,  
the Kunneth map 
$gr^*(R')\otimes gr^*(R')\to gr^*(R)$
is studied explicitly for an original Rost motive $R'$ and
a generalized Rost motive $R$.
Using this, we give examples $T(X)^2\not =0$
for the ideal $T(X)$ of torsion elements in the Chow ring $CH^*(X)$.
\end{abstract}

\section{Introduction}
Let  $X$ be a smooth variety 
over a fixed field $k\subset \bC$.
We assume $X$ is geometric cellular, i.e.,
$\bar  X=X|_{\bar k}$ is cellular for the algebraic closure
$\bar k$ of $k$.
Let $p$ be a fixed prime and $CH^*(X)$ mean
the $p$-localized Chow ring
$CH^*(X)_{(p)}$ generated by algebraic cycles on $X$ modulo
rational equivalence.  In this paper, we study 
the ideal $T(X)$ in $CH^*(X)$ generated by torsion elements,
which is also the kernel
of the restriction map
\[ res_{CH}: CH^*(X)\to CH^*(\bar X)\cong H^*(X(\bC))_{(p)}\]
where $X(\bC)$ is the complex manifold of
$\bC$-rational points.

However known  cases $T(X)\not =0$ seem not so many. 
 Typical  examples are cases that $X$ contains the
Rost motive (\cite{Ro1},\cite{Vo2},\cite{Vo3}).
 (E.g.,  $X$ are some twisted flag varieties defined
from simple algebraic groups $G$ which have $p$-torsion in $H^*(G(\bC)) $  \cite{Pe-Se-Za}. Hence $p=2,3$ or $5$.)
  
Such cases have properties 
$T(X)^2=0$ (\cite{YaE}, \cite{YaC}).  We want to make examples with $T(X)^2\not =0$. 
If the Kunneth formula would hold, then $T(X\times X)^2\not =0$.
However, we know  $CH^*(X\times X)\cong CH^*(X)\otimes CH^*(\bar X)$, and this case $T(X\times X)^2=0$ also.

To get examples of torsion elements (e.g., $T(X)^2\not =0$)
we use 
the versal twisted flag variety (\cite{Ka1}, \cite{Me-Ne-Za}), in stead of $X\times X$,
and hence we need to consider arguments over some extension $k'$
of $k$.  In stead of $CH^*(X)$,
we use the integral algebraic Morava $\tilde K(m)$-theory \cite{YaA} (e.g., when $m=1$, it is essentially isomorphic to the usual algebraic $K^*$-theory).
 Let us write $gr(m)^*(X)$ the associated graded ring of the geometric filtration of the Morava $K^*$-theory.
Let $R_n$ be the (original) Rost motive for a nonzero pure symbol
in the $mod(p)$ $(n+1)$-dimensional Milnor $K$-theory $K_{n+1}^M(k)/p$. 

\begin{thm}  Let $p=2,3$ or $5$.  When $n=2$ and $m=1$, there is an
 irreducible motive $R$ (called generalized Rost motive) 
over a field $k'$  such that 
    \[ (gr(1)^*(R_2)\otimes gr(1)^*(R_2))/J_2\cong
gr(1)^*(R)\]
where $J_2$ is some ideal explicitly known
(see $\S 4$ below). 
\end{thm}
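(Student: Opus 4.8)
The plan is to compute both sides explicitly and to match them through the Kunneth map, using the known structure of $gr(1)^*(R_2)$ together with the realization of $R$ as an indecomposable summand of $R_2\times R_2$ over a suitable $k'$. First I would record the ring $A:=gr(1)^*(R_2)$ in full. Over $\bar k$ one has $\bar R_2\cong\bigoplus_{i=0}^{p-1}\bZ(ib)[2ib]$ with $b:=p+1$, so $gr(1)^*(\bar R_2)$ is free over $\Lambda:=gr(1)^*(\mathrm{Spec}\,k)=\bZ_{(p)}[v_1^{\pm1}]$ on classes $1=y_0,y_1,\dots,y_{p-1}$, with $y_i$ in codimension $ib$ and multiplication governed by the comultiplication of the Rost motive, $y_iy_j\equiv\binom{i+j}{i}y_{i+j}$ (up to units and powers of $v_1$) for $i+j\le p-1$ and $y_iy_j=0$ otherwise. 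Descending to $k$, the map $res_{CH}$ hits each $y_i$ ($i\ge1$) only up to a power of $p$, and the non-splitting of the geometric filtration this reflects produces the torsion part of $A$; I would record $A=\Lambda\{y_0,\dots,y_{p-1}\}/(\text{explicit integral relations})$, reading it off the known computations of algebraic cobordism of Rost motives and specializing along $BP^*\to\bZ_{(p)}[v_1^{\pm1}]$.

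Next I would identify $R$ and $J_2$. Over $\bar k$, $R_2\times R_2$ splits as $\bar R_2\otimes\bar R_2$, a sum of $p^2$ Tate motives with twists in $b\cdot\{0,1,\dots,2p-2\}$, the twist $\ell b$ occurring with multiplicity $\min(\ell,\,p-1,\,2p-2-\ell)+1$; the candidate for $R$ is the indecomposable summand over $k'$ with $\bar R\cong\bigoplus_{\ell=0}^{2p-2}\bZ(\ell b)[2\ell b]$ of length $2p-1$, the remaining $(p-1)^2$ Tate summands being redundant copies. Here $k'$ is taken to be the function field of a suitable versal $G$-torsor, chosen precisely so that $R$ stays indecomposable over $k'$; this is exactly where the hypothesis $p\in\{2,3,5\}$ enters, through the existence of simple groups $G$ with $p$-torsion in $H^*(G(\bC))$. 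One then takes $J_2\subset A\otimes_\Lambda A$ to be the ideal described in $\S4$: it is generated by the identification relations $y_i\otimes y_j-y_{i'}\otimes y_{j'}$ for $i+j=i'+j'$ together with the torsion corrections forced by the $p$-divisibilities, and a degreewise count shows $(A\otimes_\Lambda A)/J_2$ is free of rank $2p-1$ over $\Lambda$ modulo torsion, with a torsion subgroup of exactly the size predicted for $gr(1)^*(R)$.

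Then I would build the Kunneth map and identify it with the quotient. The projector onto $\bar R$ inside $\overline{R_2\times R_2}$ is defined over $k'$ by construction, so the diagonal $R_2\to R_2\times R_2$ composed with this projector induces $\kappa\colon gr(1)^*(R_2)|_{k'}\otimes_\Lambda gr(1)^*(R_2)|_{k'}\to gr(1)^*(R)$. It annihilates $J_2$ because the ring relations of the first step force precisely the identifications and torsion relations defining $J_2$, and it is surjective because $y_0,\dots,y_{p-1}$ and their products generate and span $gr(1)^*(R)$. For injectivity of the induced map on $(A\otimes_\Lambda A)/J_2$ one uses that Rost motives are geometrically cellular with geometrically torsion-free $gr(1)^*$, so the Kunneth short exact sequence computing $gr(1)^*(R_2\times R_2)$ over $\bar k$ carries no higher Tor terms; combining this with the rank-and-torsion count forces $\ker\kappa=J_2$.

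The main obstacle is the torsion bookkeeping in this last step. Since $\kappa$ is not injective, showing that $\ker\kappa$ is \emph{exactly} $J_2$ --- equivalently, that quotienting by $J_2$ yields precisely the torsion of $gr(1)^*(R)$, neither more nor less --- requires the full explicit cobordism presentation of $A$ and, crucially, the indecomposability of $R$ over the carefully chosen $k'$, since any further splitting would peel off spurious Tate summands and enlarge the torsion. A secondary difficulty is uniformity in $p$: for $p=3,5$ the constants $\binom{i+j}{i}\bmod p$ are no longer all $1$, so $A$ and $J_2$ genuinely depend on $p$, and one must verify case by case that the norm-variety geometry and these binomial constants line up so that the single clean statement holds for $p=2,3,5$.
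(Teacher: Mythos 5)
Your overall strategy (explicit computation of $gr(1)^*(R_2)$, a versal-torsor field $k'$, a Kunneth map $\kappa$, surjectivity plus a kernel identification) is in the same spirit as the paper, but there are two concrete problems.

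First, your identification of $R$ is wrong. The paper's $R$ is the generalized Rost motive $R(\bG)$ of a \emph{versal flag variety} $\bG/B_{k'}$ for the product group $G=G_1\times G_2$ with each $G_i$ of type $(I)$ (this is indeed where $p=2,3,5$ enters), and it satisfies $\overline{R}\cong \bar R_2\otimes\bar R_2$, i.e.\ $\bar R$ is a sum of $p^2$ Tate motives --- condition $(*)$ of \S 4 demands that $CH^*(\bar R')\otimes CH^*(\bar R'')\to CH^*(\bar R)$ be an isomorphism. Your candidate with $\bar R\cong\oplus_{\ell=0}^{2p-2}\bZ(\ell b)[2\ell b]$ of length $2p-1$ cannot satisfy the theorem: the free part of $(gr(1)^*(R_2)\otimes gr(1)^*(R_2))/J_2$ already has rank $p^2$ (spanned by $1$, $c_0(y_1^i)$, $c_0(y_2^j)$, $c_0(y_1^i)c_0(y_2^j)$; the ideal $J_2$ is purely $p$-torsion), so $gr(1)^*(R)\otimes\bQ$ must have rank $p^2$, forcing $p^2$ Tate summands. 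Moreover, realizing $R$ as a summand of the self-product $R_2\times R_2$ is exactly the construction the paper rules out: by Lemma 3.7, $h^*(R_2\times R_2)\cong h^*(R_2)\otimes h^*(\bar R_2)$, and the remark after Lemma 4.1 shows the Kunneth map is then \emph{not} injective (e.g.\ $py_1y_2, v_1y_1y_2\in Im(res_{k_1})$, killing $c_0(y_1)c_1(y_2)$). One must pass to the versal torsor for the product group so that the two symbols become generically independent.

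Second, the two steps you flag as "the main obstacle" are precisely the content of the proof, and your proposal does not supply them. Surjectivity of $\kappa$ is not a formal generation statement; in the paper it comes from Karpenko's theorem that $h^*(BB_{k'})\to h^*(\bG/B_{k'})$ is surjective for a versal flag variety, combined with the explicit Chern-class presentation of $CH^*(X)$ for type $(I)$ groups. The identification $\ker\kappa=J_2$ does not follow from a Tor-vanishing Kunneth sequence (which is unavailable here) nor from an unspecified rank-and-torsion count; the paper proves it via Corollary 4.2: surjectivity forces the image of $res_{k_1}$ to be generated by $p^2y_1^iy_2^j$, $pv_1y_1^iy_2^j$, $v_1^2y_1^iy_2^j$, which verifies condition $(**)$ of Lemma 4.1 and hence injectivity modulo $J_2$; the vanishing of $J_2$ itself in $K^*(R)$ uses Panin's torsion-freeness and the Chevalley isomorphism $K^*(\bG/B_{k'})\cong K^*(\bar\bG/B_{k'})$. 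Without these inputs the argument does not close.
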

{\bf Remark.}   We can take the above $R, k'$ such that
$R$ is a summand of  a  versal twisted flag variety
over $k'$ (for details of versal flag varieties, see $\S 6$, below).
\begin{thm}
 When $p=2$ and $1\le m\le n-1$,
there is an irreducible motive $R$  over $k'$ such that there is an injection
 \[ (gr(m)^*(R_n)\otimes gr(m)^*(R_n))/J_2
\subset gr(m)^*(R)/J_2\]
where $J_2$ is the ideal defined in Lemma 7.2.
\end{thm}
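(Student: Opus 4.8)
The plan is to prove the injection by restricting to the algebraic closure, where the Kunneth formula holds on the nose for the split motives, and then to use the versal construction of $R$ to guarantee that the torsion of $R$ is large enough for the source not to collapse. First I would recall, from the earlier sections and from the known computation of $\Omega^*$ (hence of $\tilde K(m)^*$) of Pfister quadrics, the explicit shape of $gr(m)^*(R_n)$ for $p=2$ and $m\le n-1$: it is generated over the coefficient ring by $1$ and a single class $y$ in the geometric codimension $b$, where $\bar R_n\cong \bZ\oplus\bZ(b)[2b]$; it splits as $\Img(res_{R_n})\oplus T(R_n)$ with $res_{R_n}\colon gr(m)^*(R_n)\to gr(m)^*(\bar R_n)$ the restriction and $T(R_n)=\Ker(res_{R_n})$; and $T(R_n)^2=0$. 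I would also note that $gr(m)^*(\bar R_n)$ is free of rank $2$ over the coefficient ring and that, $\bar R_n$ being split cellular, the external product is an isomorphism $gr(m)^*(\bar R_n)\otimes gr(m)^*(\bar R_n)\xrightarrow{\ \sim\ }gr(m)^*(\bar R_n\otimes\bar R_n)$ compatible with the geometric filtration.

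Next, following \S6, I would produce $k'$ and $R$: from a norm variety $X/k$ for the symbol with $R_n$ a direct summand of its motive, pass to a versal torsor of the relevant split group (in the sense of \cite{Ka1}, \cite{Me-Ne-Za}), obtaining a twisted flag variety $Y/k'$ whose motive has an irreducible summand $R$ with $\bar R\cong \bar R_n\otimes\bar R_n$. The indecomposability of $R$ over $k'$ and, more importantly, the control of $\Ker(res_R)$ are both consequences of versality; concretely I would show that $\Img(res_R)\subseteq gr(m)^*(\bar R)$ contains the image of the external product $\Img(res_{R_n})\otimes\Img(res_{R_n})$, which is exactly the point at which a versal model, rather than $X\times X$ itself, must be used. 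The Kunneth map in the statement is the $\tilde K(m)$-version of the map of \S4, constructed from this versal model in \S7: it refines the external product $gr(m)^*(R_n)\otimes gr(m)^*(R_n)\to gr(m)^*(R_n\otimes_{k'}R_n)$ through the comparison of $R_n\otimes_{k'}R_n$ with the summand $R$ of $Y$, and it is compatible with $res$ and with the split external product above.

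Then I would bring in the ideal $J_2$ of Lemma 7.2, which is built from $T(R_n)\otimes gr(m)^*(R_n)+gr(m)^*(R_n)\otimes T(R_n)$ together with the relations coming from $y\otimes y$ and the two copies of $y^2$ — precisely so as to be killed by the Kunneth map; I would verify this vanishing using the first two steps, so that the map descends to
\[
\bar\kappa\colon\bigl(gr(m)^*(R_n)\otimes gr(m)^*(R_n)\bigr)/J_2\longrightarrow gr(m)^*(R)/J_2 .
\]
For injectivity I would run the commutative square relating $\bar\kappa$, the restrictions $res_{R_n}\otimes res_{R_n}$ and $res_R$, and the split Kunneth isomorphism from the first step: an element of $\Ker\bar\kappa$ restricts to $0$ in $gr(m)^*(\bar R_n)\otimes gr(m)^*(\bar R_n)$, hence already lies in the submodule spanned by $T(R_n)\otimes(\cdot)$, $(\cdot)\otimes T(R_n)$ and the $y^2$-relations, hence is $0$ after quotienting by $J_2$. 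The remaining subtlety — that $res_R$ may miss classes of $gr(m)^*(R)$ created by jumps of the geometric filtration in intermediate codimensions — does not obstruct injectivity: such classes simply lie outside $\Img\bar\kappa$, and their presence for $m\ge 2$ is why one gets an injection here rather than the isomorphism of Theorem 1.1.

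\textbf{Main obstacle.} The hard part is the second and third steps together: computing $\tilde K(m)^*(R)$ and the behaviour of its geometric filtration for the versal generalized Rost motive, i.e.\ identifying $T(R)=\Ker(res_R)$ and showing that no torsion class of $R$ needed for the injection is annihilated under the versal specialization. This is where the hypotheses $p=2$ and $m\le n-1$ are genuinely used: they force the single-generator, square-zero shape of $T(R_n)$, without which $J_2$ would not absorb the relations produced by $y^2$ and the external product, and $\bar\kappa$ would not even be well defined.
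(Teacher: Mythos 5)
There is a genuine gap, and it sits exactly at the point your proof needs to do real work. First, you have misidentified $J_2$: in the paper (Lemma 7.2, following \S 4) it is only the rank-one submodule generated by the antisymmetrizations $c_m(y_1^i)c_0(y_2^j)-c_0(y_1^i)c_m(y_2^j)$, \emph{not} the whole of $T(R_n)\otimes gr(m)^*(R_n)+gr(m)^*(R_n)\otimes T(R_n)$ as you describe. This matters because the classes $c_m(y_1)c_m(y_2)$ and $c_m(y_1)c_0(y_2)$ are nonzero in the source $(gr(m)^*(R_n)\otimes gr(m)^*(R_n))/J_2$, and showing that they remain nonzero in $gr(m)^*(R)/J_2$ is the entire content of the theorem (it is what produces $T(X)^2\neq 0$ in Corollary 1.3). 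Your injectivity argument — chase an element of $\Ker\bar\kappa$ through restriction to $\bar k'$, conclude it lies in the torsion part, conclude it dies mod $J_2$ — is therefore circular: those torsion products already restrict to $0$ in $CH^*(\bar R_n)\otimes CH^*(\bar R_n)$, so the commutative square with $res\otimes res$ detects only the free part $C_0$ and tells you nothing about $C_1$ and $C_2$. With the correct $J_2$ the last step ("hence is $0$ after quotienting by $J_2$") is simply false.

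What the paper actually does, and what is missing from your proposal, is to verify the criterion $(**)$ of Lemma 4.1: one must show that $2\,y_1y_2$ and $v_m\,y_1y_2$ do \emph{not} lie in the image of the restriction on the connective theory $k_m^*$. Note this is an \emph{upper} bound on $\Img(res)$, the opposite direction from your claim that versality makes $\Img(res_R)$ contain the split external product. The paper proves $2\,y_1y_2\notin \Img(res_{CH})$ by a torsion-index computation ($t(G_1\times G_2)=t(G_1)t(G_2)=2^{2\ell}$ for $G_i\cong SO(2^{n+1}-1)$, whereas $2y_1y_2\in\Img(res)$ would force $t(G)\le 2^{2\ell-1}$), and then deduces $v_m y_1y_2\notin\Img(res_{\Omega})$ using the Quillen operation $r_{\Delta_m}$ with $r_{\Delta_m}(v_m)=2$. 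Your proposal contains no substitute for either step, and without them there is no mechanism to see that the torsion products survive into $gr(m)^*(R)$. (Your choice of $R$ as the generalized Rost summand of a versal flag variety for a product of two copies of the relevant group is consistent with the paper, but versality alone does not yield the injectivity; it is the torsion-index and Quillen-operation inputs that do.)
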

Since there is an ideal $I(m)\subset CH^*(X)$ with
$CH^*(X)/I(m)\cong gr(m)^*(X)$, we see $T(X)^2\not =0$
for $X$ containing $R$ in the preceding theorems.
For each $s\ge 3$, the similar arguments work
for the $s$-th tensor product $gr(m)^*(R')^{\otimes s}$.
Thus we can show ;

\begin{cor}
Let $p=2,3$ or $5$.  For each $s\ge 1$, there is a twisted flag variety $X$ with $ T(X)^s\not =0$ for the torsion subgroup $T(X)$ of $CH^*(X)$.
\end{cor}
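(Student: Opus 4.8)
The plan is to deduce the Corollary from Theorem 1.1 by running the construction of $\S\S4$–$7$ with $s$ tensor factors in place of two. Fix $p\in\{2,3,5\}$ and work throughout with $m=1$, $n=2$, so that Theorem 1.1 is available for all three primes; here $gr(1)^*$ is (up to the usual identification) the graded ring of algebraic $K$-theory and $R_2$ is the original Rost motive of a nonzero pure symbol in $K_3^M(k)/p$. Let $\rho\in gr(1)^*(R_2)$ be the distinguished nonzero torsion class, so that $\rho\ne 0$ but $\rho^2=0$. The class to keep track of is the external product $\rho^{\otimes s}=\rho\otimes\cdots\otimes\rho\in gr(1)^*(R_2)^{\otimes s}$, which is nonzero because $\rho$ is.

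\textbf{Step 1: the $s$-fold generalized Rost motive.} Exactly as in Theorem 1.1 and its Remark (the case $s=2$), and as indicated in the paragraph following the Corollary for the $s$-th tensor product $gr(m)^*(R')^{\otimes s}$, the arguments of $\S\S4,7$ produce a field $k_{(s)}\supset k$, an irreducible (generalized Rost) motive $R^{(s)}$ over $k_{(s)}$ that is a summand of a versal twisted flag variety $X=X_{(s)}$ over $k_{(s)}$, an explicit ideal $J_s$, and an injection
\[ \bigl(gr(1)^*(R_2)^{\otimes s}\bigr)/J_s\ \hookrightarrow\ gr(1)^*(R^{(s)})\subset gr(1)^*(X). \]
One then checks, by the same codimension/weight bookkeeping that shows $\rho\otimes\rho\notin J_2$ in the $s=2$ case (which is what forces $T\ne 0$ there), that the image of $\rho^{\otimes s}$ in $gr(1)^*(R^{(s)})$ is nonzero. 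Writing $\bar t_i\in gr(1)^*(X)$ for the image of $1\otimes\cdots\otimes\rho\otimes\cdots\otimes 1$ (with $\rho$ in slot $i$), the $\bar t_i$ are torsion and $\bar t_1\cdots\bar t_s$ is the image of $\rho^{\otimes s}$, hence nonzero.

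\textbf{Step 2: descent to $CH^*$.} Using the surjection $CH^*(X)\twoheadrightarrow CH^*(X)/I(1)\cong gr(1)^*(X)$ of the paragraph after Theorem 1.3, lift each $\bar t_i$ to some $t_i\in CH^*(X)$. Since $X$ is geometric cellular, $CH^*(\bar X)\cong H^*(X(\bC))_{(p)}$ is torsion free and injects into $gr(1)^*(\bar X)$; as $\bar t_i$ is torsion, its restriction to the torsion-free ring $gr(1)^*(\bar X)$ vanishes, and chasing the commutative square relating $res_{CH}$ with the restriction on $gr(1)^*$ gives $res_{CH}(t_i)=0$, i.e. $t_i\in T(X)$. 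The product $t_1\cdots t_s$ maps to $\bar t_1\cdots\bar t_s\ne 0$ in $CH^*(X)/I(1)$, so $t_1\cdots t_s\notin I(1)$; in particular $t_1\cdots t_s\ne 0$, and therefore $T(X)^s\ne 0$.

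\textbf{Main obstacle.} The routine part is Step 2; the real content is Step 1, namely pushing the explicit analysis of $J_2$ from $\S4$ and Lemma 7.2 to $s$ factors and verifying $\rho^{\otimes s}\notin J_s$. This requires (a) an inductive (or direct) description of $J_s$ together with a careful count of codimensions ensuring that $\rho^{\otimes s}$, which sits in the top-type degree, is not absorbed, and (b) the construction, at each stage, of a versal twisted flag variety over $k_{(s)}$ containing the relevant generalized Rost motive — the point where the hypothesis $p\in\{2,3,5\}$ (the primes for which the relevant simple $G$ has $p$-torsion in $H^*(G(\bC))$, cf. \cite{Pe-Se-Za}) is essential.
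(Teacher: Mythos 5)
Your proposal is correct and follows essentially the same route as the paper: the witness element is the product $c_1(y_1)\cdots c_1(y_s)$ of the torsion generators inside the versal flag variety of $\times_{i=1}^s G_i$ with $G_i$ of type $(I)$, it survives modulo $J_s$ because $J_s$ consists only of mixed $c_0$--$c_1$ terms, and it lifts via $CH^*(X)/I(m)\cong gr(m)^*(X)$ to a nonzero element of $T(X)^s$. The only (cosmetic) difference is that the paper also records the Pfister-quadric variant $X=\bG/P_k$ for general $m$ when $p=2$, whereas you work solely with the $m=1$, type $(I)$ case, which indeed suffices for all three primes.
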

   
This paper is organized as follows.
In $\S 2$, we recall the geometric filtrations of
algebraic Morava $K$-theories.
In $\S 3$ we recall  Chow rings and Morava  $K$-theories
of the Rost motives $R_n$,
and in $\S 4$, we consider the Kunneth formula of  
$K$-theories for  $R_n$.
In $\S 5$ we consider when $X$ are flag varieties
containing Rost motives.
 In $\S 6$, we recall versal torsors and  study complete
flag varieties containing $R_2$.
In the last section, we study the maximal neighbor of the $(n+1)$-Pfister quadric which contains $R_{n-1}$.

\section{geometric filtration}

Let us fix a prime number $p$.
Recall (toplogical) $BP^*(X)$ is the cohomology theory with
the coefficient ring 
$BP^*=BP^*(pt.)\cong \bZ_{(p)}[v_1,v_2,...]$ for $|v_i|=-2(p^i-1)$.
The  complex cobordism theory $MU^*(X)_{(p)}$ localized at $p$
contains $BP^*(X)$ as a natural direct summand.
Moreover it is known that
\[MU^*(X)_{(p)}\cong BP^*(X)\otimes _{BP^*}MU_{(p)}^*\]

Let $h^*(-)$ be a (topogical) $BP$-oriented cohomology,
with  the coefficient ring $h^*\cong BP^*/I$ for some ideal
$I\subset BP^*$
(e.g., $H^*(-)_{(p)}, BP^*(-), k(i)^*(-)$).  (Moreover
$CH^*(X)$ means the $p$-localized theory  $CH^*(X)_{(p)}$ in this paper).  
Let $Ah^{*,*'}(X)$
be the corresponding motivic theory with
$Ah^{2*,*}(pt.)\cong h^*=h^*(pt.)$ \cite{YaA}.  Then we can consider the Atiyah-Hirzebruch spectral sequence  (\cite{YaA}, \cite{YaB}) 
\[E_2^{*,*',*''}\cong H^{*,*'}(X;h^{*''})\Longrightarrow
          Ah^{*,*'}(X).\]

It is known $H^{*,*'}(X)=0$ if $*>2*'$ and 
$H^{2*,*}(X)\cong CH^*(X)$.  
Hence $d_r|H^{2*,*}(X)=0$ and $E_{\infty}^{2*,*,0}\cong
CH^*(X)/I$ for some ideal $I$. 
Thus we have (\cite{YaA})                      
\begin{lemma}  There is a (geometric) 
filtration $(F^i)$ of $Ah^{2*,*}(X)$
and an ideal $I(h)\subset CH^*(X)$ such that
\[ CH^*(X)/I(h)\cong E_{\infty}^{2*,*,0}=\oplus_{i} F^i/F^{i+1}.\]
\end{lemma}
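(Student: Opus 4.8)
The plan is to extract the lemma from the structure of the motivic Atiyah--Hirzebruch spectral sequence
\[ E_2^{*,*',*''}\cong H^{*,*'}(X;h^{*''})\Longrightarrow Ah^{*,*'}(X) \]
recalled above (see \cite{YaA}, \cite{YaB}). First I would note that since $\bar X$ is cellular and $X$ is finite dimensional, the motivic cohomology $H^{s,w}(X;h^{*''})$ is nonzero only in a bounded range of $(s,w)$, so the spectral sequence is bounded and converges; in particular the abutment $Ah^{2*,*}(X)$ carries a finite, exhaustive, separated filtration $(F^i)$ with $F^i/F^{i+1}$ isomorphic to the sum of the $E_\infty$-terms of filtration degree $i$, and this is the geometric filtration. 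On the ``Chow line'' we have $E_2^{2*,*,0}\cong H^{2*,*}(X;h^0)\cong CH^*(X)$, using $h^0\cong\bZ_{(p)}$.

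Next I would show the Chow line consists of permanent cycles. Each $d_r$ raises the first (cohomological) index while changing the weight by a controlled amount, so any differential issuing from $E_r^{2*,*,0}$ lands in a bidegree $(s,w)$ with $s>2w$, where $H^{s,w}(X;h^{*''})=0$ by the vanishing $H^{*,*'}(X)=0$ for $*>2*'$. Hence $d_r|E_r^{2*,*,0}=0$ for all $r$, and the edge map gives a surjection $CH^*(X)=E_2^{2*,*,0}\twoheadrightarrow E_\infty^{2*,*,0}$. Let $I(h)\subset CH^*(X)$ be its kernel, i.e.\ the subgroup generated by the images of all differentials hitting the Chow line. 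Because the spectral sequence is multiplicative and each $d_r$ is a derivation, for a cycle $y\in E_r^{2*,*,0}$ and any $z$ with $d_rz\in E_r^{2*,*,0}$ one has $(d_rz)\cdot y=\pm d_r(zy)$, so the image of $d_r$ on the Chow line is stable under multiplication by $E_r^{2*,*,0}$; passing to the limit (finitely many steps) shows $I(h)$ is an ideal. Therefore $E_\infty^{2*,*,0}\cong CH^*(X)/I(h)$.

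Finally I would identify $\bigoplus_i F^i/F^{i+1}$ with $E_\infty^{2*,*,0}$: the geometric filtration on $Ah^{2*,*}(X)$ is precisely the filtration coming from the cohomological degree in the spectral sequence, whose associated graded is assembled from the surviving terms on the Chow line (the contributions of $h^{*''}$ with $*''<0$ being carried in other weights, hence not entering the graded pieces of $Ah^{2*,*}(X)$ in the relevant normalization). Combining the two displays gives $CH^*(X)/I(h)\cong E_\infty^{2*,*,0}=\bigoplus_i F^i/F^{i+1}$. The main obstacle is the bookkeeping in this last step --- pinning down exactly which $E_\infty$-terms contribute to the associated graded of the geometric filtration and checking convergence carefully --- together with the verification that $I(h)$ is genuinely an ideal rather than merely a subgroup; the vanishing-region argument and the multiplicativity of the spectral sequence are what make both of these go through.
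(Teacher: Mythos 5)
Your argument is exactly the one the paper uses (and cites to [Ya1]): the motivic Atiyah--Hirzebruch spectral sequence, the vanishing $H^{s,w}(X)=0$ for $s>2w$ forcing the Chow line $E_2^{2*,*,0}\cong CH^*(X)$ to consist of permanent cycles, and the edge surjection whose kernel $I(h)$ is an ideal by multiplicativity. The extra details you supply (boundedness/convergence, the Leibniz-rule check that $I(h)$ is an ideal, the identification of the graded pieces of the geometric filtration) are correct elaborations of steps the paper leaves implicit.
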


Let us write the above graded ring by
\[ gr(h)_{geo}^*(X)=\oplus F_i/F_{i+1},\]
and say the associated ring of the geometric filtaration
of $h^*$-theory.  

In this paper, we  mainly consider the algebraic Morava $K$-theory.  
For the coefficient ring $BP^*=\bZ_{(p)}[v_1,v_2,...]$, 
the integral connected Morava $K$-theory 
 $\tilde k(m)^*(-)$ is the cohomology theory 
with the coefficient ring 
\[ \tilde k(m)^*=BP^*/(v_1,...,\hat v_m,...)\cong \bZ_{(p)}[v_m],\] and 
$\tilde K(m)^*(X)=\tilde k(m)^*(X)[v_m^{-1}].$ 
For ease of notation, we write
\[ k_m^{2*}(X)=A\tilde k(m)^{2*,*}(X)\quad with\  \ k_m^*
=\bZ_{(p)}[v_m], \]
and $K_m^*(X)=k_m^*(X)[v_m^{-1}]$.
Here note that $K_1^*(X)$ is (essentially) isomorphic to
the usual algebraic $K^*$-theory.

{\bf  Remark.}  We note that
$ k_m^{2*}(X)=A\tilde k(m)^{2*,*}(X)\cong
ABP^{2*,*}(X)\otimes _{BP^*}k_m^*.$
However for $(2*,*')\not =(2*,*)$, in general, we know
\[A\tilde k(m)^{2*,*'}(X)\not \cong   
ABP^{2*,*'}(X)\otimes _{BP^*}k_m^*.\]

We have the isomorphism
\[ CH^*(X)\cong ABP^{2*,*}(X)\otimes _{BP^*}\bZ_{(p)}
 \cong k_m^{2*}(X)\otimes _{k_m^*}\bZ_{(p)}.\]
Hence $I(k_m)=0$.  We have $K_m^*(X)\cong k_m^*(X)[v_m^{-1}]$.  However  $I( K_m)$ is non-zero, in general,
which is generated by (higher) $v_m$-torsion elements
in $E_{\infty}^{2*,*,0}$  of the spectral sequence converging to $k_m^*(X)$.
Let us write simply
\[ gr(m)^*(X)=gr(K_m)_{geo}^*(X)\]
the graded ring of the geometric filtration of the 
integral algebraic Morava $A\tilde K(m)^{2*,*}$-theory,
and try to compute this graded ring in the next sections.

\quad

\section{ Morava K-theory of the Rost motive}

Let $\Omega^*(X)$ be the $BP^*$-version of the algebraic cobordism (defined by Levine-Morel in \cite{Le-Mo1}, \cite{Le-Mo2}) that is
\[ \Omega^{2*}(X)=ABP^{2*,*}(X)\cong MGL^{2*,*}(X)\otimes_{MU^*}BP^*,\]
where $MGL^{*,*'}(X)$ is the motivic cobordism theory
defined by Voevodsky \cite{Vo1}.

We recall the (original) Rost motive $R_a$
(we write it by $R_n$)
defined from a nonzero pure symbol $a$ in $K_{n+1}^M(k)/p$
(\cite{Ro1}, \cite{Ro2}, \cite{Vo1}, \cite{Vo2}, \cite{Vo3}).
Let $\bar R_n=R_n|_{\bar k}$ for the algebraic
closure $\bar k$ of $k$.
We have  isomorphisms (with degree $|y|=2(p^n-1)/(p-1)$)
\[ CH^*(\bar R_n)\cong \bZ[y]/(y^p),\quad \Omega^*(\bar
R_n)\cong BP^*[y]/(y^p). \]
(Here $|y|$ is the two time of the usual  degree for $y\in CH^*(X).$)
For a ring $A$, let us denote by $A\{a,...,b\}$ the free
$A$-module generated by $a,...,b$.  For a graded
ring $B$, let $B^+$ be the degree positive parts in $B$.
\begin{thm} (\cite{Vi-Ya}, \cite{YaB}, \cite{Me-Su})
The restriction 
$ res_{\Omega}:\Omega^*(R_n)\to \Omega^*(\bar R_n)$
is injective.  Let  $I_n=(p,...,v_{n-1})\subset BP^*$. Then
\[Im(res_{\Omega})\cong  BP^*\{1\}\oplus I_n\otimes \bZ_{(p)}[y]^{+}/(y^p)
\subset BP^*[y]/(y^p)\cong \Omega^*(\bar R_n).\]
Take  $c_i(y^j)\in \Omega^*(R_n)$ with 
 $res_{\Omega}(c_i(y^j))=v_iy^j$. Identifying $c_i(y^j)\in CH^*(R_n)$,   
\[CH^*(R_n)\cong \bZ_{(p)}\{1\}\oplus
\oplus_j\bZ_{(p)}\{c_0(y^j)\}\oplus \oplus_{i,j}\bZ/p\{c_i(y^j)\}\]
where $i,\ j$ range for  $1\le j\le p-1,\ 
1\le i\le n-1.$
\end{thm}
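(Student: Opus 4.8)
We sketch the proof strategy. The plan is to reduce the statement to two assertions: that $res_{\Omega}$ is injective, and that its image is the prescribed sub-$BP^*$-module of $\Omega^*(\bar R_n)$. The computation over $\bar k$ is routine, $\bar R_n$ being the finite sum of Tate motives $\bigoplus_{j=0}^{p-1}\bZ(bj)[2bj]$ with $b=(p^n-1)/(p-1)$, so that $\Omega^*(\bar R_n)\cong BP^*[y]/(y^p)$ with $|y|=2b$. Granting the two assertions, the description of $CH^*(R_n)$ follows from the isomorphism $CH^*(X)\cong ABP^{2*,*}(X)\otimes_{BP^*}\bZ_{(p)}$ recalled in $\S 2$: multiplication by $y^j$ identifies $BP^*$ with a free rank-one summand of $BP^*[y]/(y^p)$, whence $\Img(res_{\Omega})\cong BP^*\oplus\bigoplus_{j=1}^{p-1}I_n$ as $BP^*$-modules, and applying $-\otimes_{BP^*}\bZ_{(p)}$ (with $v_i\mapsto 0$) gives $\bZ_{(p)}\{1\}$, the free summands $\bZ_{(p)}\{c_0(y^j)\}$ carried by the images $p\,y^j$ of the generator $p\in I_n$, and --- since $\mathrm{Tor}_1^{BP^*}(BP^*/I_n,\bZ_{(p)})\cong(\bZ/p)^{\oplus(n-1)}$ with basis the classes of $v_1,\dots,v_{n-1}$ --- the torsion summands $\bZ/p\{c_i(y^j)\}$ for $1\le i\le n-1$.

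For injectivity, $R_n$ is a direct summand of the motive of a norm variety $X$ for the pure symbol $a$ (a Pfister quadric when $p=2$). By Rost nilpotence the kernel of $res_{\Omega}$ is a nil ideal, and it must vanish once the description of $\Omega^*(R_n)\cong\Img(res_{\Omega})$ obtained below realizes it as a sub-$BP^*$-module of $BP^*[y]/(y^p)$, a ring with no nonzero nilpotents; one may equally invoke the cobordism-theoretic nilpotence theorem of \cite{Vi-Ya}.

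It remains to identify $\Img(res_{\Omega})$, which I would do by induction on $n$, the base case $n=1$ being the Severi--Brauer variety of a degree-$p$ division algebra (a conic when $p=2$), where the answer is classical. Writing $a=a'\cup\{c\}$ with $c\in k^{\times}$, the symbol $a$ dies in $K^M_{n+1}(k(c^{1/p}))/p$, so $R_n$ splits over the degree-$p$ extension $k(c^{1/p})/k$; by the projection formula the transfer of $y^j\in\Omega^*(R_n|_{k(c^{1/p})})$ restricts to $p\,y^j$, giving the classes $c_0(y^j)$. The rest --- that $v_i\,y^j\in\Img(res_{\Omega})$ for $1\le i\le n-1$ and that nothing outside $I_n\,y^j$ appears in positive degrees --- is the heart of the matter, and is where the nonvanishing of the pure symbol $a$ of weight exactly $n+1$ is used. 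I would obtain it from Vishik's symmetric operations on $\Omega^*$, equivalently by running the Atiyah--Hirzebruch spectral sequence $H^{*,*'}(R_n;BP^{*''})\Rightarrow ABP^{*,*'}(R_n)$, whose differentials are built from the motivic Milnor operations $Q_0=\beta,Q_1,\dots,Q_{n-1}$ acting on the copies of the motivic cohomology of the \v{C}ech simplicial scheme $\check C(X)$ out of which $H^{*,*'}(R_n)$ is assembled (Voevodsky). Geometrically, the classes $v_i\,y^j$ are realized by pushing forward algebraic-cobordism fundamental classes along the correspondences of Rost's iterated construction of $X$, the Rost degree formula computing their restrictions modulo terms of higher $v$-weight (which lie in the image by the inductive hypothesis). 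The outcome is $\Img(res_{\Omega})=BP^*\{1\}\oplus I_n\otimes\bZ_{(p)}[y]^{+}/(y^p)$, whence the theorem.

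The step I expect to be the main obstacle is the last one: carrying the symmetric-operation (or spectral-sequence) computation through the induction on $n$ while controlling the interaction of the $\check C(X)$-part with the Tate part \emph{integrally} --- not merely modulo $p$ --- so that the answer lands on the invariant ideal $I_n$ exactly. This is precisely the technical core of \cite{Vi-Ya} for $p=2$ and of \cite{YaB}, \cite{Me-Su} for odd $p$.
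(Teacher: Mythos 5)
The paper does not prove this statement: Theorem 3.1 is recalled with citations (\cite{Vi-Ya}, \cite{YaB}, \cite{Me-Su}) and used as input, so there is no internal proof to compare yours against. Judged on its own terms, your outline correctly identifies the architecture of the proofs in those references (the Tate decomposition of $\bar R_n$, the transfer from the degree-$p$ splitting field producing $p\,y^j$, the Atiyah--Hirzebruch spectral sequence for $ABP^{*,*'}(R_n)$ with differentials governed by the Milnor operations $Q_i$ on the motivic cohomology of the \v{C}ech simplicial scheme, and the degree formula / symmetric operations for the upper bound on the image), and your derivation of $CH^*(R_n)$ from the module structure of $\Img(res_{\Omega})$ via $-\otimes_{BP^*}\bZ_{(p)}$ and the computation $I_n\otimes_{BP^*}\bZ_{(p)}\cong \bZ_{(p)}\oplus(\bZ/p)^{n-1}$ is correct and is exactly how the last assertion follows from the first two.

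Two genuine gaps remain, however. First, your injectivity argument is circular: knowing that $\ker(res_{\Omega})$ consists of nilpotents and that $BP^*[y]/(y^p)$ has no nonzero nilpotents proves nothing, because elements of the kernel are precisely those that leave no trace in the target; "realizing $\Omega^*(R_n)$ as a submodule of $\Omega^*(\bar R_n)$" \emph{is} the injectivity statement, not a consequence of it. In \cite{Vi-Ya} and \cite{YaB} injectivity is extracted from the same spectral-sequence computation that identifies the image (one shows the $E_\infty$-term in bidegrees $(2*,*)$ has no classes supported on the reduced part of $\check C(X)$ beyond those detected over $\bar k$), so it cannot be quoted separately as a soft nilpotence argument. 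Second, the identification of $\Img(res_{\Omega})$ in positive degrees --- both that $v_iy^j$ lies in the image for $1\le i\le n-1$ and that nothing outside $I_n\,y^j$ does --- is the entire content of the theorem, and you explicitly defer it to the "technical core" of the references rather than carrying it out. As a roadmap to the literature the proposal is accurate; as a proof it establishes only the formal reductions and leaves the substantive computation unproved.
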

{\bf Example.}  In particular,   we have  isomorphisms
\[ CH^*(R_2)\cong \bZ_{(p)}\{1,c_0(y),...,c_0(y^{p-1})\}
\oplus \bZ/p\{c_1(y),...,c_1(y^{p-1})
\}.\]

\begin{lemma}  In $\Omega^*(R)$, we have the equation
\[ v_sc_r(y^j)-v_rc_s(y^j)=0\quad for\ 0\le r,s\le n-1.\]
\end{lemma}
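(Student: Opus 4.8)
The plan is to work entirely inside $\Omega^*(\bar R_n) \cong BP^*[y]/(y^p)$, where the restriction map $res_\Omega$ is injective by the preceding theorem, so it suffices to verify the relation after restricting. First I would recall that by definition $res_\Omega(c_i(y^j)) = v_i y^j$ for $1 \le i \le n-1$, and that $c_0(y^j)$ can be taken with $res_\Omega(c_0(y^j)) = v_0 y^j = p y^j$ (writing $v_0 = p$). Then for $0 \le r,s \le n-1$ we compute directly
\[
res_\Omega\bigl(v_s c_r(y^j) - v_r c_s(y^j)\bigr) = v_s v_r y^j - v_r v_s y^j = 0
\]
in $BP^*[y]/(y^p)$, since $BP^*$ is commutative. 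Because $res_\Omega$ is injective on $\Omega^*(R_n)$, the element $v_s c_r(y^j) - v_r c_s(y^j)$ is already zero in $\Omega^*(R_n)$, which is the claim.

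The one point that requires a word of care is that $c_i(y^j)$ is only defined up to the kernel of $res_\Omega$ — but by the theorem that kernel is trivial, so each $c_i(y^j)$ is in fact a well-defined element of $\Omega^*(R_n)$ (equivalently, $\Omega^*(R_n)$ is identified with its image $BP^*\{1\} \oplus I_n \otimes \bZ_{(p)}[y]^+/(y^p)$). I would also note that the products $v_s c_r(y^j)$ make sense as elements of $\Omega^*(R_n)$: the image submodule is a $BP^*$-submodule, and $v_s \cdot v_r y^j = v_r v_s y^j$ indeed lies in $I_n \otimes \bZ_{(p)}[y]^+/(y^p)$ when at least one of $r,s$ is between $1$ and $n-1$ (and when $r=s=0$ the relation is trivially $0=0$, while the element $p \cdot p y^j$ lies in the image as well). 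So the module structure poses no obstruction.

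There is essentially no hard part: the entire content is that $res_\Omega$ is a ring homomorphism into a commutative ring and is injective, both of which are given. The only thing worth spelling out, if one wants to be scrupulous, is the identification of $c_0(y^j)$ with a genuine element (rather than a coset) and the observation that the displayed relation is the pullback of the commutativity relation $v_s v_r = v_r v_s$ in $BP^*$. I would therefore present the proof as a two-line computation: apply $res_\Omega$, use commutativity of $BP^*[y]/(y^p)$, and conclude by injectivity. If desired, one can remark that the same argument gives more generally $v_s c_r(z) - v_r c_s(z) = 0$ for any $z$ in the relevant submodule, and that this relation is exactly what forces the "$v$-linear" structure on $Im(res_\Omega)$ used in the Künneth computations of $\S 4$.
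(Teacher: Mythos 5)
Your proof is correct and is exactly the paper's argument: apply $res_\Omega$, observe that $v_s v_r y^j - v_r v_s y^j = 0$ in $\Omega^*(\bar R_n)$ by commutativity, and conclude from the injectivity of $res_\Omega$ established in Theorem 3.1. The extra remarks on well-definedness of the $c_i(y^j)$ are fine but not needed beyond what the paper already assumes.
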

\begin{proof}
In $\Omega^*(\bar R)$, we see
$ v_sc_r(y^j)-v_rc_s(y^j)=v_sv_ry^j-v_rv_sy^j=0.$
The injectivity of $res_{\Omega}$ implies the lemma.
\end{proof}

Now we compute the Morava $K$-theory of the Rost motive
$R_n$.   By definition,  
$k_m^*=BP^*/(v_1,...,\hat v_m,...),$
and hence $v_i=0\in k_m^*$ for $i\not =0,m$.
\begin{lemma} 
Let $R_a=R_n$ and $m\ge n$.  Then $k_m(R_n)\cong
k_m^*\otimes CH^*(R_n)$.
\end{lemma}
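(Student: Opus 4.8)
The plan is to combine the structure theorem for $\Omega^*(R_n)$ (Theorem 3.1) with the observation that passing to $k_m^*$ kills all the generators $v_i$ except $v_0 = p$ and $v_m$. First I would recall from Theorem 3.1 that $\mathrm{res}_\Omega : \Omega^*(R_n) \to \Omega^*(\bar R_n) \cong BP^*[y]/(y^p)$ is injective with image $BP^*\{1\} \oplus I_n \otimes \bZ_{(p)}[y]^+/(y^p)$, where $I_n = (p, v_1, \dots, v_{n-1})$. Since $m \ge n$, we have $v_m \notin I_n$, so $v_m$ does not appear among the ``ideal generators'' forcing torsion; the only relevant coefficient in $I_n$ after base change to $k_m^* = \bZ_{(p)}[v_m]$ is $v_0 = p$ (because $v_1, \dots, v_{n-1}$ all become $0$ in $k_m^*$, and indeed $n - 1 < m$ so none of them equals $v_m$).

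Next I would use the base-change formula noted in the excerpt, $k_m^{2*}(X) \cong ABP^{2*,*}(X) \otimes_{BP^*} k_m^* = \Omega^*(X) \otimes_{BP^*} k_m^*$, applied to $X = R_n$. Tensoring the short description $\Omega^*(R_n) = \mathrm{Im}(\mathrm{res}_\Omega)$ over $BP^*$ with $k_m^*$, the summand $BP^*\{1\}$ becomes $k_m^*\{1\}$, and the summand $I_n \otimes \bZ_{(p)}[y]^+/(y^p)$ becomes $(I_n \otimes_{BP^*} k_m^*) \otimes_{\bZ_{(p)}} \bZ_{(p)}[y]^+/(y^p)$. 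Because $v_1, \dots, v_{n-1}$ map to zero and $v_m$ is a non-zero-divisor not in the support of the relations, $I_n \otimes_{BP^*} k_m^*$ is just the ideal $(p) \subset k_m^*$, so each generator $c_i(y^j)$ with $1 \le i \le n-1$ contributes a $v_m$-free $\bZ/p$, and $c_0(y^j)$ contributes a free $\bZ_{(p)}$; matching against the description of $CH^*(R_n)$ in Theorem 3.1 gives $k_m^*(R_n) \cong k_m^* \otimes_{\bZ_{(p)}} CH^*(R_n)$. One should check there is no lurking $\mathrm{Tor}$ contribution obstructing flatness of the base change; here it is clean because $\bZ_{(p)}[y]^+/(y^p)$ is $\bZ_{(p)}$-free.

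The one point requiring care — and the main obstacle — is justifying that $k_m^{2*}(R_n)$ is genuinely computed by $\Omega^*(R_n) \otimes_{BP^*} k_m^*$ as a ring with no extension problems in the Atiyah–Hirzebruch spectral sequence: a priori the graded pieces could assemble nontrivially, and one must confirm that identifying $c_i(y^j) \in k_m^{2*}(R_n)$ with the corresponding element of $CH^*(R_n)$ is compatible with multiplication. This follows because, by the remark in $\S 2$, $k_m^{2*}(X) \cong ABP^{2*,*}(X) \otimes_{BP^*} k_m^*$ holds on the nose (this is the ``nice'' bidegree), so the isomorphism is as graded rings, not merely as graded groups, and Lemma 3.2 (the relations $v_s c_r(y^j) = v_r c_s(y^j)$) is automatically respected since $v_s = 0$ in $k_m^*$ for all $s$ with $1 \le s \le n-1$. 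Thus the ring structure on the right-hand side is exactly that of $k_m^* \otimes CH^*(R_n)$, completing the argument.
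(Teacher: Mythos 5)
Your overall route is the paper's: base-change $k_m^*(R_n)\cong \Omega^*(R_n)\otimes_{BP^*}k_m^*$ applied to the structure of $\Omega^*(R_n)$ from Theorem 3.1, noting that $m\ge n$ forces $v_1,\dots,v_{n-1}\mapsto 0$ in $k_m^*$. But the central computation is wrong as written, and in a way that would contradict your own conclusion. The module $I_n\otimes_{BP^*}k_m^*$ is \emph{not} the ideal $(p)\subset k_m^*$: you have confused the tensor product with the extended ideal $I_n\cdot k_m^*$ (the image of the natural map $I_n\otimes_{BP^*}k_m^*\to k_m^*$). Since $(p,v_1,\dots,v_{n-1})$ is a regular sequence, $I_n$ has the Koszul presentation on generators $e_0=p,e_1=v_1,\dots,e_{n-1}=v_{n-1}$ with relations $v_je_i-v_ie_j$; tensoring (right-exactly) with $k_m^*$ and setting $v_i=0$ for $1\le i\le n-1$ leaves only the relations $pe_j=0$ for $j\ge 1$, so
\[
I_n\otimes_{BP^*}k_m^*\ \cong\ k_m^*\{e_0\}\ \oplus\ \bigoplus_{j=1}^{n-1}\bigl(k_m^*/p\bigr)\{e_j\},
\]
which surjects onto $(p)$ with kernel the torsion summands. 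That kernel is exactly the $\mathrm{Tor}_1^{BP^*}(BP^*/I_n,k_m^*)$ contribution you dismiss as absent; it is nonzero and is precisely what produces the classes $c_i(y^j)$, $1\le i\le n-1$, as $\bZ/p[v_m]$-summands matching the torsion of $CH^*(R_n)$. If your identification with $(p)$ were correct, you would instead get $k_m^*(R_n)\cong k_m^*[y]/(y^p)$, with no torsion at all. (The irrelevance of $\bZ_{(p)}$-freeness of $\bZ_{(p)}[y]^+/(y^p)$ is a separate point: the non-flatness in question is that of $k_m^*$ over $BP^*$.)

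For comparison, the paper avoids the presentation of $I_n$ entirely by invoking Lemma 3.2: the relation $v_sc_r(y^j)=v_rc_s(y^j)$ in $\Omega^*(R_n)$ specializes to $pc_i(y^j)=v_ic_0(y^j)=0$ in $k_m^*(R_n)$ for $1\le i\le n-1$, and together with the base-change isomorphism this gives the module structure directly. Your proof would be repaired by replacing the claim ``$I_n\otimes_{BP^*}k_m^*=(p)$'' either with the Koszul computation above or with the appeal to Lemma 3.2; the remarks about the ring structure and the bidegree $(2*,*)$ are fine.
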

\begin{proof}
We see  $pc_i(y^j)=v_ic_0(y^j)=0\in k^*_m(R_n)$ for all $1\le i\le n-1$.   Since $k_m^*(R)\cong \Omega^*(R)\otimes_{BP^*}k_m^*$, we have  
\[k_m^*(R_n)\cong 
k_m^*\{1\}\oplus \oplus_i k_m^*\{c_0(y^j)\}\oplus \oplus_{i,j}k_m^*/p\{c_i(y^j)\},\]
which is isomorphic to $k_m^*\otimes CH^*(R)$.
\end{proof}
{\bf Remark.} In the Atiyah-Hirzebruch spectral sequence
\cite{YaA}
\[ H^{*,*'}(R_n;K_m^{*''})\cong E_2^{*,*',*''}\Longrightarrow A\tilde k(m)^{*,*'}(R_n),\]
the  nonzero differential is $d_{r+1}$ with
$r=0$ $mod(|v_m|)=mod(2p^m-2)$.
However, $|R_n|=2(p^n-1)$.  Hence if $m\ge n$,   then the spectral sequence collapses, in particular
\[ k_m^*(R_n)\cong k_m^*\otimes H^{2*,*}(R_n)\cong k_m^*\otimes CH^*(R_n).\] 
\begin{lemma}
Let $0\le m\le n-1$.  Then we have the $k_m^*$-module isomorphism $k_m^*(R_n)\cong k^*\{1\}\oplus KA\otimes \bZ[y]/(y^{p-1})$
 where  \[ KA=
k_m^*/(p,v_m)\{c_i(y)|i\not =0,m\}\oplus k_m^*\{c_0(y),c_m(y)\}/(pc_m(y)=v_mc_0(y)).\]
The restriction map induces the $K_m^*$-module isomorphism
\[K_m^{*}(R_a)\cong K_m^*\{1,c_m(y),....,c_m(y^{p-1})\}
\cong K_m^*[y]/(y^p)\cong  
K_m^*(\bar R_a).\]\end{lemma}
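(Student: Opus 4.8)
The plan is to reduce the statement to a $BP^*$-module computation and then specialize. The key input is the base-change identity from the Remark following Lemma 2.1: in this bidegree one has $k_m^*(R_n)\cong\Omega^*(R_n)\otimes_{BP^*}k_m^*$ with \emph{no} restriction on $m$, so the Atiyah--Hirzebruch spectral sequence plays no role here (even though for $m\le n-1$ it does not collapse). By Theorem 3.1 and the injectivity of $res_\Omega$, the $BP^*$-module $\Omega^*(R_n)$ is the submodule
\[ BP^*\{1\}\ \oplus\ \bigoplus_{j=1}^{p-1}I_n\cdot y^j\ \subset\ BP^*[y]/(y^p)=\Omega^*(\bar R_n),\]
in which $c_i(y^j)$ is the unique element restricting to $v_iy^j$, and abstractly the $j$-th slice $I_n\cdot y^j$ is a copy of the ideal $I_n=(v_0,\dots,v_{n-1})$, $v_0=p$, with $c_i(y^j)$ corresponding to the generator $v_i$. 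Hence
\[ k_m^*(R_n)\ \cong\ k_m^*\{1\}\ \oplus\ \bigoplus_{j=1}^{p-1}\bigl(I_n\otimes_{BP^*}k_m^*\bigr)\cdot y^j,\]
and everything comes down to the single module $I_n\otimes_{BP^*}k_m^*$.

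For that I would exploit that $v_0=p,v_1,\dots,v_{n-1}$ is a regular sequence in $BP^*=\bZ_{(p)}[v_1,v_2,\dots]$, so the truncated Koszul complex presents $I_n\cong BP^*\{e_0,\dots,e_{n-1}\}/(v_ie_j-v_je_i:i<j)$ with $e_i\mapsto v_i$. Since $-\otimes_{BP^*}k_m^*$ is right exact, this presentation descends to $I_n\otimes_{BP^*}k_m^*\cong k_m^*\{e_0,\dots,e_{n-1}\}/(\bar v_ie_j-\bar v_je_i)$, where $\bar v_0=p$, $\bar v_m=v_m$ and $\bar v_i=0$ for $i\ne 0,m$; the relations that survive are $p\,e_i=0$ and $v_me_i=0$ for each $i\ne 0,m$, together with the mixed relation $p\,e_m=v_me_0$. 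Thus
\[ I_n\otimes_{BP^*}k_m^*\ \cong\ k_m^*\{e_0,e_m\}/(p\,e_m=v_me_0)\ \oplus\ \bigoplus_{i}k_m^*/(p,v_m)\{e_i\}\qquad(1\le i\le n-1,\ i\ne m),\]
which is exactly $KA$ under $e_i\leftrightarrow c_i(y)$. Reassembling the $p-1$ copies indexed by $j$ (the $j$-th being $c_i(y)\otimes y^{j-1}\leftrightarrow c_i(y^j)$) gives $k_m^*(R_n)\cong k_m^*\{1\}\oplus KA\otimes\bZ[y]/(y^{p-1})$, the first assertion. One could instead argue straight from the explicit generators and the relations $v_sc_r(y^j)=v_rc_s(y^j)$ of Lemma 3.2, but then one must verify that no further relations occur; this ``no hidden relations'' step is exactly what regularity of $v_0,\dots,v_{n-1}$ secures (equivalently, it amounts to controlling $\mathrm{Tor}_1^{BP^*}(BP^*/I_n,k_m^*)$ in the exact sequence obtained by tensoring $0\to I_n\to BP^*\to BP^*/I_n\to 0$ with $k_m^*$), and I expect it to be the main technical obstacle; the remainder is bookkeeping.

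For the localized statement, invert $v_m$ in the decomposition just found. Each torsion summand $k_m^*/(p,v_m)\{c_i(y^j)\}$ then dies, while in the surviving summand the relation $p\,c_m(y^j)=v_mc_0(y^j)$ becomes $c_0(y^j)=p\,v_m^{-1}c_m(y^j)$, so $c_0(y^j)$ is redundant and one is left with the free rank-one module $K_m^*\{c_m(y^j)\}$. Running over $j=1,\dots,p-1$ yields $K_m^*(R_n)\cong K_m^*\{1,c_m(y),\dots,c_m(y^{p-1})\}$. Finally the restriction map sends $1\mapsto 1$ and $c_m(y^j)\mapsto v_my^j$; since $v_m$ is a unit in $K_m^*$, these images form a $K_m^*$-basis of $K_m^*[y]/(y^p)=K_m^*(\bar R_n)$, and therefore $res$ induces the asserted $K_m^*$-module isomorphism.
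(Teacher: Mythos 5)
Your proposal is correct and follows essentially the same route as the paper: both arguments reduce the computation to the base change $k_m^*(R_n)\cong \Omega^*(R_n)\otimes_{BP^*}k_m^*$ together with the description of $\Omega^*(R_n)$ as $BP^*\{1\}\oplus\bigoplus_{j=1}^{p-1} I_n\cdot y^j$ from Theorem 3.1, and both obtain the localized statement by inverting $v_m$ and eliminating $c_0(y^j)=pv_m^{-1}c_m(y^j)$. The one place where you go beyond the paper's proof is the ``no hidden relations'' step: the paper merely records the relations $v_mc_r(y^j)=v_rc_m(y^j)=0$ and $pc_r(y^j)=v_rc_0(y^j)=0$ from Lemma 3.2 and then asserts the first isomorphism, whereas you justify that these generate \emph{all} relations by noting that $p,v_1,\dots,v_{n-1}$ is a regular sequence, so the Koszul relations present $I_n$ and the presentation descends under the right-exact functor $-\otimes_{BP^*}k_m^*$; this correctly fills the gap you identified as the main technical point.
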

\begin{proof} 
From the preceding lemma, we see that
\[ v_mc_r(y^j)=v_rc_m(y^j)=0,\quad pc_r(y^j)=v_rc_0(y^j)=0\]
in $ k_m^*(R)$ for $ r\not =0,m.$
Since $k_m^*(R)\cong \Omega^*(R)\otimes _{BP^*}k_m^*$,
we have the first isomorphism.
Recall $K_m^*(X)\cong k_m^*(X)[v_m^{-1}]$.
In $K_m^*(R_n)$, we have $ c_0(y^j)=v_m^{-1}pc_m(y^j)$. Localized the first isomorphism by $v_m$, we get the second isomorphism.
\end{proof}

{\bf Remark.}  In Proposition 4.1 in \cite{Pe-Se1},
the $K_m^*$-motive decomposition of $R_n$ is given.
However we do note use $K_m^*$-motives in this paper.
We use $\Omega^*$-motives $R$ and $K_m^*(R)$
is defined just as $\Omega^*(R)\otimes_{BP^*}K_m^*$.

Let us write $gr_p(A)=A^0\oplus A^+/p\oplus pA^+$ for a graded ring $A$.
Recall the graded ring 
$gr(m)^*(X)=gr(K_m)_{geo}(X)$ defined in the preceding section.
\begin{cor}
There are isomorphisms
\[ gr(m)^*(R_n)\cong CH^*(R_n)/I(m)
\]
\[ \cong \bZ_{(p)}\{1,c_0(y),...,c_0(y^{p-1})\}\oplus 
\bZ/p\{c_m(y),...,c_m(y^{p-1})\}.\]where 
$I(m)=\bZ/p\{c_i(y^j)|i\not =0,m\}$.
We also have the (ungraded) $K_m^*$-module 
isomorphism
\[ K_m^*\otimes gr(m)(R_n)\cong gr_p(K_m^*(\bar R_n)).
\]
\end{cor}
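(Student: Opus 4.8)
The plan is to deduce everything from Lemma 3.4 together with the general mechanism recalled in $\S 2$. By Lemma 2.1 applied to $h=K_m$ we already have $gr(m)^*(R_n)\cong CH^*(R_n)/I(K_m)$, so the only real task is to pin down the ideal $I(K_m)\subset CH^*(R_n)$. Since $I(k_m)=0$, the bottom row $E_{\infty}^{2*,*,0}$ of the Atiyah--Hirzebruch spectral sequence converging to $k_m^*(R_n)$ is all of $CH^*(R_n)$, and, as recalled in $\S 2$, $I(K_m)$ is the ideal generated by those classes whose images in $k_m^*(R_n)$ are $v_m$-torsion (i.e.\ killed by a power of $v_m$).

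First I would locate this $v_m$-torsion using Lemma 3.4. By Lemma 3.2, $v_mc_i(y^j)=v_ic_m(y^j)$ in $\Omega^*(R_n)$, so after reducing modulo $(v_1,\dots,\hat v_m,\dots)$ we get $v_mc_i(y^j)=0$ in $k_m^*(R_n)$ whenever $i\neq 0,m$; these classes span exactly the summand $k_m^*/(p,v_m)\{c_i(y^j)\mid i\neq 0,m\}$ of $k_m^*(R_n)$, on which $v_m$ acts by zero. The complementary summand $k_m^*\{c_0(y^j),c_m(y^j)\}/(pc_m(y^j)=v_mc_0(y^j))$ carries no $v_m$-torsion, since inverting $v_m$ turns it into the free $K_m^*$-module on the $c_m(y^j)$ (Lemma 3.4). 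Hence the $v_m$-torsion of $k_m^*(R_n)$ is precisely $\bZ/p\{c_i(y^j)\mid i\neq 0,m\}$; this is already an ideal of $CH^*(R_n)$, because any product involving a positive-degree class lands in the span of the $y^\ell$ with $\ell\geq p$, which vanishes. Therefore $I(K_m)=\bZ/p\{c_i(y^j)\mid i\neq 0,m\}=I(m)$, and dividing the presentation of $CH^*(R_n)$ from Theorem 3.1 by $I(m)$ yields the stated formula for $gr(m)^*(R_n)$.

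For the last isomorphism I would compute $gr_p(K_m^*(\bar R_n))$ directly and compare. By Lemma 3.4 the restriction map is an isomorphism $K_m^*(R_n)\cong K_m^*(\bar R_n)$, so it suffices to identify $gr_p(K_m^*(R_n))$. By definition this equals $K_m^*\{1\}\oplus (K_m^*/p)\{c_m(y^j)\}\oplus K_m^*\{pc_m(y^j)\}$ for $1\leq j\leq p-1$, and by Lemma 3.2 (with $r=0$, $s=m$, and $v_0=p$) we have $pc_m(y^j)=v_mc_0(y^j)$ in $K_m^*(R_n)$; since $v_m$ is a unit in $K_m^*$ this gives $K_m^*\{pc_m(y^j)\}=K_m^*\{c_0(y^j)\}$, hence $gr_p(K_m^*(R_n))=K_m^*\{1,c_0(y^j)\}\oplus (K_m^*/p)\{c_m(y^j)\}$. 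This is exactly $K_m^*\otimes gr(m)^*(R_n)$ by the formula just proved, so the proof is complete.

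I expect the one genuinely substantive point to be the claim that the $v_m$-torsion of $k_m^*(R_n)$ is confined to the $k_m^*/(p,v_m)$-summand of Lemma 3.4, i.e.\ that nothing in the $k_m^*\{c_0(y^j),c_m(y^j)\}/(pc_m=v_mc_0)$-summand is $v_m$-torsion; this is exactly what makes $c_0(y^j)$ and $c_m(y^j)$ survive into $gr(m)^*(R_n)$, and it is immediate from Lemma 3.4 because localizing that summand at $v_m$ yields a free $K_m^*$-module. The remaining work is bookkeeping: tracking the $gr_p$ construction in the (ungraded) final identification, where the degree-shifting unit $v_m\in K_m^*$ converts the relation $pc_m(y^j)=v_mc_0(y^j)$ into the desired match $pc_m(y^j)\leftrightarrow c_0(y^j)$ of free $K_m^*$-module generators.
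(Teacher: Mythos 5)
Your proposal is correct and follows essentially the same route as the paper: identify $I(K_m)$ with the $v_m$-torsion of $k_m^*(R_n)$ via Lemma 3.4 (the paper states this more tersely as ``$v_mc_i(y^j)=0$ in $k_m^*(R)$ for $i\neq 0,m$, so $c_i(y^j)=0$ in $K_m^*(R)$''), and then compute $gr_p$ using the identifications $c_m(y^j)\leftrightarrow v_my^j$, $c_0(y^j)\leftrightarrow py^j$ and the invertibility of $v_m$; your version is if anything more careful, since you also verify that the summand $k_m^*\{c_0(y^j),c_m(y^j)\}/(pc_m=v_mc_0)$ contributes no $v_m$-torsion. One small inaccuracy: your reason that $I(m)$ is closed under multiplication (``any product involving a positive-degree class lands in the span of $y^\ell$ with $\ell\geq p$'') fails for $p\geq 3$, where $j+l$ can be less than $p$; the correct argument is that $c_i(y^j)c_k(y^l)$ equals the class restricting to $v_iv_ky^{j+l}$, hence equals $v_ic_k(y^{j+l})$ in $\Omega^*(R_n)$ by injectivity of $res_\Omega$, which dies in $CH^*(R_n)=\Omega^*(R_n)\otimes_{BP^*}\bZ_{(p)}$ since $v_i\mapsto 0$ for $i\geq 1$ (and equals $pc_k(y^{j+l})=0$ when $i=0$).
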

\begin{proof}
Since $v_mc_i(y^j)=0\in k_m(R)$ for $i\not =0,m$, so $c_i(y^j)=0\in K_m^*(R)$.
Hence the first isomorphism is immediate.

 The second isomorphism follows from the isomorphism (for $CH^*(R)$) in Theorem 3.1 using $v_m^{-1}\in K_m^*$, in fact
\[ K^*_m\otimes CH^*(R)\cong
K_m^*/p\{c_m(y),...,c_m(y^{p-1}\}\oplus
K_m^*\{1,c_0(y),...,c_0(y^{p-1})\}\]
\[\cong K_m^*/p\{v_my,...,v_my^{p-1}\}\oplus
K_m^*\{1,py,...,py^{p-1}\}\]
\[ \cong K^*_m/p\{y,...,y^{p-1}\}\oplus
K_m^*\{1,py,...,py^{p-1}\}\cong gr_p(K_m^*[y]/(y^p)).\]
\end{proof}
\begin{cor}
When $n=2$, we see $I(1)=0$.
\end{cor}

The Rost motive $R_n=R_a$ is defined from the norm variety $V_n$ with $V_n(\bC)\cong v_n$ and $a|_{k(V_n)}=0$.
In fact, there is $y_2\in CH^*(V_n\times V_n)$ \cite{Ro2}
with $|y_2|=2(p^n-1)/(p-1)$ such that 
$R_n=pr_{\theta}(V_n)$ for the  projector $pr_{\theta}$
defined from the correspondence $\theta=y_2^{p-1}
\in CH^{dim(V_n)}(V_n\times V_n)$.
In particular 
\[ CH^*(R_n\times R_n)\cong CH^*(R_n)\otimes
\bZ_{(p)}[y_2]/(y_2^p)\cong CH^*(R_n)\otimes CH^*(\bar R_n).\] 
\begin{lemma} 
For $h=CH,\Omega,k_m,K_m$ or $h=gr(m) $
with $h^*=\bZ_{(p)}$,  we have
\[  h^*(R_n\times R_n)\cong h^*(R_n)\otimes_{h^*} h^*(\bar R_n).\]
\end{lemma}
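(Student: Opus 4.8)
The plan is to reduce all cases to $h=CH$ and $h=\Omega$. The case $h=CH$ is exactly the displayed isomorphism preceding the statement (it holds because Rost's correspondence $\theta=y_2^{p-1}$ is defined over $k$), so the genuine content is the case $h=\Omega$, and the remaining theories follow formally from it. Indeed, for an $\Omega^*$-motive $M$ the theories $k_m^*(M)$ and $K_m^*(M)$ are by definition $\Omega^*(M)\otimes_{BP^*}k_m^*$ and $\Omega^*(M)\otimes_{BP^*}K_m^*$; since $\Omega^*(\bar R_n)=BP^*[y_2]/(y_2^p)$ is $BP^*$-free, tensoring the isomorphism $\Omega^*(R_n\times R_n)\cong\Omega^*(R_n)\otimes_{BP^*}\Omega^*(\bar R_n)$ with $k_m^*$ (respectively inverting $v_m$) at once gives $h^*(R_n\times R_n)\cong h^*(R_n)\otimes_{h^*}h^*(\bar R_n)$ for $h=k_m$ and $h=K_m$. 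For $h=gr(m)$ I would argue as in Corollary 3.5, now for the product: by Lemma 3.4 one has $v_mc_i(y^j)=0$ in $k_m^*(R_n)$ for $i\neq 0,m$, hence also in $k_m^*(R_n\times R_n)\cong k_m^*(R_n)[y_2]/(y_2^p)$, so the classes $c_i(y^j)y_2^l$ die after inverting $v_m$ and the geometric filtration of $A\tilde K(m)^{2*,*}(R_n\times R_n)$ cuts out the ideal $I(m)\otimes_{\bZ_{(p)}}\bZ_{(p)}[y_2]/(y_2^p)$. Combined with $gr(m)^*(\bar R_n)=CH^*(\bar R_n)=\bZ_{(p)}[y_2]/(y_2^p)$ (over $\bar k$ there is no torsion, hence no ideal), this yields
\[ gr(m)^*(R_n\times R_n)\cong\bigl(CH^*(R_n)/I(m)\bigr)\otimes_{\bZ_{(p)}}\bZ_{(p)}[y_2]/(y_2^p)\cong gr(m)^*(R_n)\otimes_{\bZ_{(p)}}gr(m)^*(\bar R_n). \]

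It thus remains to treat $h=\Omega$. First I would produce a comparison map: the non-torsion class $y_2\in CH^*(R_n\times R_n)$ lifts to $\Omega^*(R_n\times R_n)$, and together with the $\Omega^*(R_n)$-module structure (via the first projection) this gives a $BP^*$-algebra homomorphism
\[ \varphi\colon\ \Omega^*(R_n)\otimes_{BP^*}\bZ_{(p)}[y_2]/(y_2^p)\ \longrightarrow\ \Omega^*(R_n\times R_n). \]
The decisive point --- and the step I expect to be the main obstacle --- is the injectivity of the restriction
\[ res_\Omega\colon\ \Omega^*(R_n\times R_n)\longrightarrow\Omega^*(\bar R_n\times\bar R_n)\cong BP^*[y,y_2]/(y^p,y_2^p). \]
I would deduce it either from Rost's nilpotence principle for direct summands of products of norm varieties together with the Atiyah--Hirzebruch spectral sequence of $\S 2$ (its differentials vanish on $H^{2*,*}=CH^*$, while the $CH^*$-computation above controls the higher filtration quotients), in the spirit of \cite{Vi-Ya} and \cite{YaB}, or by observing that $R_n\times R_n$ becomes a sum of Tate motives over $k(V_n)$ and checking that $res_\Omega$ is compatible with that splitting, exactly as for a single $R_n$ in Theorem 3.1.

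Granting injectivity, the remaining task is to identify $Im(res_\Omega)$, which I would carry out by the standard bookkeeping of $BP^*$-coefficients: comparing $res_\Omega$ with $res_{CH}$ (whose image for $R_n\times R_n$ is $\bZ_{(p)}\{1,py,\dots,py^{p-1}\}\otimes_{\bZ_{(p)}}\bZ_{(p)}[y_2]/(y_2^p)$) and using the relations $v_sc_r(y^j)=v_rc_s(y^j)$ of Lemma 3.2, one finds that $Im(res_\Omega)$ equals $\bigl(BP^*\{1\}\oplus I_n\cdot\bZ_{(p)}\{y,\dots,y^{p-1}\}\bigr)\otimes_{BP^*}\bZ_{(p)}[y_2]/(y_2^p)$, a submodule of $BP^*[y,y_2]/(y^p,y_2^p)$. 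On the one hand this is the image of $res_\Omega$ for $R_n$ (Theorem 3.1) tensored over $BP^*$ with $\Omega^*(\bar R_n)$; on the other hand it is exactly the image of $\varphi$. Since $res_\Omega\circ\varphi$ and $res_\Omega$ are both injective with the same image, $\varphi$ is an isomorphism, giving $\Omega^*(R_n\times R_n)\cong\Omega^*(R_n)\otimes_{BP^*}\Omega^*(\bar R_n)$ and hence the lemma in all cases.
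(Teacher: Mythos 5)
Your reductions of $k_m$, $K_m$ and $gr(m)$ to the $\Omega$-case are fine: the paper indeed defines $k_m^*(R)$ and $K_m^*(R)$ as $\Omega^*(R)\otimes_{BP^*}k_m^*$, resp.\ $\Omega^*(R)\otimes_{BP^*}K_m^*$, for an $\Omega^*$-motive $R$, and $\Omega^*(\bar R_n)$ is $BP^*$-free. The genuine gap is in your $\Omega$-case, specifically the claim that $Im(res_{\Omega})$ for $R_n\times R_n$ can be identified ``by standard bookkeeping'' from $Im(res_{CH})$ and the relations of Lemma 3.2. That is not enough: $Im(res_{CH})$ only controls the $\bZ_{(p)}$-coefficient (leading) part of elements of $Im(res_{\Omega})$, so it cannot exclude, say, $v_n yy_2\in Im(res_{\Omega})$ (note $v_n\notin I_n$), and Lemma 3.2 is a relation among elements already known to lie in $\Omega^*(R_n)$, not an upper bound on the image for the product. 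If such an extra element existed, your $\varphi$ would fail to be surjective; so the upper bound on $Im(res_{\Omega})$ is exactly the hard content, and for a single $R_n$ it is the deep Theorem 3.1 (Vishik--Yagita, Merkurjev--Suslin), not formal bookkeeping. Your injectivity step is likewise only sketched, though the second alternative you mention (the splitting over $k(V_n)$) is close to what is actually needed.

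The paper never computes the restriction image for the product: the point of the paragraph preceding the lemma is that Rost's class $y_2\in CH^*(V_n\times V_n)$, defined over $k$, and the projector $pr_{\theta}$ with $\theta=y_2^{p-1}$ decompose the motive $R_n\otimes R_n$ into Tate twists of $R_n$ indexed by the basis $1,y_2,\dots,y_2^{p-1}$ of $CH^*(\bar R_n)$. From that motivic decomposition the isomorphism $h^*(R_n\times R_n)\cong h^*(R_n)\otimes_{h^*}h^*(\bar R_n)$ is immediate and simultaneous for every theory on the list --- including $\Omega$ (which then yields injectivity of $res_{\Omega}$ and the image for free) and $gr(m)$ --- with no case analysis. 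The fix is therefore to replace your image computation by an appeal to this decomposition; as written, the $\Omega$-case is not established.
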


\section{Kunneth maps for Rost motives}

Let us write
\[ R'=R_{n_1},\ \ R''=R_{n_2}\quad with\ 
1\le m\le min(n_1,n_2).\]
Let us write
$CH^*(\bar R')\cong \bZ[y_1]/(y_1^p)$
and  $CH^*(\bar R'')\cong \bZ[y_2]/(y_2^p)$, and 
\[ C(R',R'')=gr(m)^+(R')\otimes gr(m)^+(R'')\]
\[
\cong CH^+(R')/I'(m)\otimes CH^+(R'')/I''(m).\]
Then we have the decomposition $C(R',R'')
= C_0\oplus C_1\oplus C_2$
\[where\quad C_s=\begin{cases}
 \oplus_{1\le i,j\le p-1}\bZ\{c_0(y_1^i)c_0(y_2^j)\}
\qquad if\ s=0\\
   \oplus_{1\le i,j\le p-1}    \bZ/p\{c_m(y_1^i)c_m(y_2^j)\}\qquad if\ s=1\\
 \oplus_{1\le i,j\le p-1}\bZ/p\{c_m(y_1^i)c_0(y_2^j), c_0(y_1^i)c_m(y_2^j)\}\quad    if\ s=2.
 \end{cases}  \]

We consider some  motive $R$ such that there are maps
$R\to R',R"$, and hence $CH^*(R')\otimes CH^*(R'')\to CH^*(R)$ which induces over $\bar k$
\[(*)\quad CH^*(\bar R')\otimes CH^*(\bar R'')\stackrel{\cong}{\to}CH^*(\bar R).\]
Of course $R=R'\times R''$ satisfies the above conditions,
however, we mainly consider other cases.

Let $J_2\subset C_2$  be defined by \[J_2=\oplus_{1\le i,j\le p-1}\bZ/p\{c_m(y_1^i)c_0(y_2^j)- c_0(y_1^i)c_m(y_2^j)\}.\]
Let $gr_{p^2}(A)=A^0\oplus A^+/p\oplus pA^+/p^2A^+\oplus p^2A^+$.  Let us write
\[ \tilde gr(m)(R)= gr(m)(R)/(gr(m)(R')+gr(m)(R'')), \quad \]
\[\tilde K^*_m(\bar R)=K^*_m(R)/(K^*_m(\bar R')+K^*_m(\bar R'')).\]

\begin{lemma}  Let $R$ satisfies $(*)$.
We have an isomorphism
\[ K^*_m\otimes C(R',R'')/(J_2)
\cong 
gr_{p^2}(\tilde K^*_m(\bar R)).\]
The map $ j: C(R',R'')/J_2 \to \tilde gr(m)(R)/J_2$
is injective, if we have, in the connective $k_m^*(\bar R)$, 
\[ (**)\qquad  py_1^iy_2^j,\ v_my_1^iy_2^j\not \in Im(res_{k_m})\quad
for \ 1\le i,j\le p-1.\]
 \end{lemma}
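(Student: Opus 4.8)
I would split the statement into the displayed isomorphism and the injectivity of $j$, and attack both by comparison with the split situation over $\bar k$.

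\emph{The isomorphism.} Since $\bar R$ is cellular, the Atiyah--Hirzebruch spectral sequence converging to $K_m^*(\bar R)$ collapses, so by $(*)$ one has $K_m^*(\bar R)\cong K_m^*[y_1,y_2]/(y_1^p,y_2^p)$, with $y_1,y_2$ the generators of Theorem 3.1 for $\bar R'$ and $\bar R''$. Hence $\tilde K_m^*(\bar R)$ is $K_m^*$-free on the monomials $y_1^iy_2^j$, $1\le i,j\le p-1$, all of positive degree in the $y$-variables, so
\[ gr_{p^2}(\tilde K_m^*(\bar R))\cong
\bigoplus_{1\le i,j\le p-1}\bigl(K_m^*/p\ \oplus\ pK_m^*/p^2K_m^*\ \oplus\ p^2K_m^*\bigr)\{y_1^iy_2^j\}.\]
On the other hand $C(R',R'')/J_2=C_0\oplus C_1\oplus(C_2/J_2)$, with $C_0$ free over $\bZ_{(p)}$ on $\{c_0(y_1^i)c_0(y_2^j)\}$, $C_1=\bigoplus\bZ/p\{c_m(y_1^i)c_m(y_2^j)\}$, and $C_2/J_2=\bigoplus\bZ/p\{[c_m(y_1^i)c_0(y_2^j)]\}$. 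Composing the Kunneth map with $res_{\Omega}$ and with $v_m^{-1}\in K_m^*$, and using $res_{\Omega}(c_0(y^j))=py^j$ and $res_{\Omega}(c_m(y^j))=v_my^j$ (Theorem 3.1) together with the invertibility of $v_m$, one obtains, up to units,
\[ c_0(y_1^i)c_0(y_2^j)\mapsto p^2y_1^iy_2^j,\qquad
c_m(y_1^i)c_m(y_2^j)\mapsto y_1^iy_2^j\ (\mathrm{mod}\ p),\]
\[ c_m(y_1^i)c_0(y_2^j)\mapsto py_1^iy_2^j\ (\mathrm{mod}\ p^2). \]
The two classes $c_m(y_1^i)c_0(y_2^j)$ and $c_0(y_1^i)c_m(y_2^j)$ both restrict to $pv_my_1^iy_2^j$, which is exactly the relation defining $J_2$. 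Matching the three $p$-adic layers yields the isomorphism $K_m^*\otimes C(R',R'')/J_2\cong gr_{p^2}(\tilde K_m^*(\bar R))$, realized by $res$ followed by inversion of $v_m$.

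\emph{Injectivity of $j$.} The composite
\[ \iota\colon\ C(R',R'')/J_2\ \hookrightarrow\ K_m^*\otimes C(R',R'')/J_2\ \xrightarrow{\ \cong\ }\ gr_{p^2}(\tilde K_m^*(\bar R))\]
is injective ($K_m^*$ is free over $\bZ_{(p)}$), so it suffices to factor $\iota$ as $F\circ j$ for a suitable map $F\colon\tilde gr(m)(R)/J_2\to gr_{p^2}(\tilde K_m^*(\bar R))$. Such an $F$ is to be built by restricting, through $res_{k_m}\colon k_m^*(R)\to k_m^*(\bar R)$, the distinguished lifts in $\Omega^*(R)$ of the generators of $gr(m)(R)$, then inverting $v_m$ and passing to the associated graded of the $p$-adic filtration of $\tilde K_m^*(\bar R)$; the content is that this is well defined on $gr(m)(R)=CH^*(R)/I(m)$ modulo $gr(m)(R')+gr(m)(R'')$, and that its image really lies in $gr_{p^2}$ and not merely in $gr_p$. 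This is what $(**)$ secures: if $py_1^iy_2^j$ or $v_my_1^iy_2^j$ lay in $Im(res_{k_m})$, then, comparing with the known image of $res_{k_m}$ on $R'$, $R''$ and the Kunneth map, and using $res_{k_m}(c_m(y_1^i)c_0(y_2^j))=pv_my_1^iy_2^j$, $res_{k_m}(c_m(y_1^i)c_m(y_2^j))=v_m^2y_1^iy_2^j$, $res_{k_m}(c_0(y_1^i)c_0(y_2^j))=p^2y_1^iy_2^j$, one of the three families of generators would be forced to be divisible by $v_m$ (resp.\ by $p$) in $k_m^*(R)$, hence would drop out of $gr(m)(R)$ or fall into a lower $p$-adic layer, and $F$ could not exist with the stated target. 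Granting $(**)$, all three families stay non-divisible, $F$ is well defined, and $\iota=F\circ j$ forces $j$ to be injective.

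\emph{Main obstacle.} The delicate point is the construction of $F$: making precise the comparison between the geometric filtration $I(m)$ on $CH^*(R)$ and the $p$- and $v_m$-adic structure of $k_m^*(\bar R)$, and verifying that under $(**)$ no generator becomes divisible in $k_m^*(R)$. Since $res_{k_m}$ for $R$ is not assumed injective, one must control $Im(res_{k_m}\colon k_m^*(R)\to k_m^*(\bar R))$ from the outside, using only the known structure of $k_m^*(R')$, $k_m^*(R'')$ (Lemmas 3.3 and 3.4), the Kunneth map, and the hypothesis $(**)$.
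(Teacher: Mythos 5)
\emph{On the isomorphism.} Your treatment of the first assertion is essentially the paper's: one checks that $res$ followed by inversion of $v_m$ sends $c_0(y_1^i)c_0(y_2^j)$, $c_m(y_1^i)c_0(y_2^j)$ and $c_m(y_1^i)c_m(y_2^j)$ to $p^2y_1^iy_2^j$, $py_1^iy_2^j$ (up to a unit) and $y_1^iy_2^j$ (up to a unit) respectively, that $J_2$ dies under $res_K$, and then matches $C_0$, $C_2/J_2\cong C_2'$ and $C_1$ with the three layers $p^2\tilde K_m^*(\bar R)$, $p\tilde K_m^*(\bar R)/p^2$ and $\tilde K_m^*(\bar R)/p$. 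This part is fine.

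\emph{On the injectivity.} Here there is a genuine gap. Your plan is to factor the injection $\iota=F\circ j$ through a map $F\colon \tilde gr(m)(R)/J_2\to gr_{p^2}(\tilde K_m^*(\bar R))$, and you yourself flag the construction of $F$ as the ``delicate point'' without carrying it out. The obstacle is real: the geometric associated graded $gr(m)(R)$ is not known (in the later applications only injectivity of $j$, not surjectivity, is established, so $gr(m)(R)$ may be strictly larger than the image of the Kunneth map), and there is no natural map from the geometric graded of $K_m^*(R)$ to the $p$-adic graded of $K_m^*(\bar R)$; defining $F$ on classes outside $Im(j)$ would require exactly the control you do not have. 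The paper avoids constructing any such $F$ and instead argues directly by contradiction in the connective theory: take $x=c_m(y_1^i)c_0(y_2^j)\in C_2'$, so $res_{k_m}(x)=pv_my_1^iy_2^j$ in $k_m^*(\bar R)$; if $x$ were killed by $j/p\colon C(R',R'')/(p,J_2)\to gr(m)^*(R)/(p,J_2)$, then $pv_my_1^iy_2^j$ would lie in $(p,v_m)\,Im(res_{k_m})$, which by inspection of $k_m^*[y_1,y_2]/(y_1^p,y_2^p)$ forces $py_1^iy_2^j\in Im(res_{k_m})$ or $v_my_1^iy_2^j\in Im(res_{k_m})$, contradicting $(**)$; the cases $x\in C_1$ and $x\in C_0$ are handled the same way. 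One then deduces injectivity of $j$ from injectivity of $j/p$ using that $C_1$ and $C_2$ are $p$-torsion and that $j|_{C_0}$ is injective. To repair your argument you should replace the hypothetical factorization through $F$ by this direct contradiction argument (or at least restrict $F$ to the image of $j$, which amounts to the same thing).
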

\begin{proof} 
We first note that 
\[res_{K}:K_m^*(R)\to K_m^*(\bar R)\cong K_m[y_1,y_2]/(y_1^p,y_2^p)\]
is surjective, since $c_m(y^j_i)\mapsto v_my_i^j$.
We also note that
\[res_{K}(c_m(y_1^i)c_0(y_2^j)- c_0(y_1^i)c_m(y_2^j))
=v_my_1^ipy_2^j-py_1^iv_my_2^j=0\in K^*_m(\bar R).\]
Hence we see $res(J_2)=0$.

Let $C_2'=\oplus_{1\le i,j\le p-1}\bZ/p\{c_m(y_1^i)c_0(y_2^j)\},$ so that
$C_2'\cong C_2/J_2$.
We can show the isomorphism in this lemma.
In fact, we see \ \  
$K^*_m\otimes C_0\cong p^2\tilde K^*_m(\bar R),$ \
$K^*\otimes C_1\cong \tilde K^*(\bar R)/p$
from $v_m^{-1}\in K_m^*$, \ \ and
\[K_m^*\otimes C_2/J_2\cong  K^*_m\otimes C_2'\cong p\tilde K^*_m(\bar R)/p^2.\]

Next, we consider the map 
\[ k_m^*(R')\otimes_{k_m^*}k_m^*(R'')\to 
k_m^*(R)\to k_m^*(\bar R)\cong k_m^*[y_1,y_2]/(y_1^p,y_2^p).\]
Take $x=c_m(y_1^i)c_0(y_2^j)\in C_2'$ so that
$res_{k_m}(x)=pv_my_1^iy_2^j$ identifying
$x\in k_m^*(R)$.
Moreover, suppose $j/p(x)=0$  for $j/p : C(R',R'')/(p,J_2)\to gr(m)^*(R)/(p,J_2)$. 
  Then
\[ pv_my_1^iy_2^j\in (p,v_m)Im(res_{k_m})\subset k_m^*(\bar R).\]
This means that 
$py_1^iy_2^j\in Im(res_{k_m})$
or $v_my_1^iy_2^j\in Im(res_{k_m}).$
This contradicts to $(**)$ in this lemma.

We also show that $(**)$ implies $j/p(x)\not=0$ in the
cases $x\in C_1$ and $x\in C_0$.
So $(**)$ implies the injectivity of $j/p$.
 Recall that $C_1,C_2$ are just $p$-torsion and 
$j|C_0$ is injective.  Hence the injectivity of $j/p$ implies
so of $j$. 
\end{proof}

{\bf Remark.}
When $R'=R''$ and $R=R'\times R''$ satisfy $(*)$.
From Lemma 3.7,  we know over $k$,
\[ gr(m)^*(R)\cong gr(m)^*(R')\otimes gr(m)^*(\bar R')
.\]
Hence $j$ is not injective, e.g., $j(c_0(y_1)c_m(y_2))=v_mc_0(y_1)y_2=0,$
since $y_2\in gr(m)^*(\bar R').$
Note  $py_1y_2, v_my_1y_2\in Im(res_{k_m})$.  

\begin{cor} If the map $j: C(R',R'')/J_2\to \tilde gr(m)^*(R)/J_2$
is surjective, then it is isomorphic.
\end{cor}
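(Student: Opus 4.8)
The plan is to derive injectivity of $j$ from its surjectivity; $j$ is then a bijective homomorphism of the (finitely generated) $\bZ_{(p)}$-modules in question, hence an isomorphism. First I would pin down where $\Ker j$ can sit. Writing $C_2'=C_2/J_2$, we have $C(R',R'')/J_2=C_0\oplus C_1\oplus C_2'$ with $C_0$ free of rank $(p-1)^2$ over $\bZ_{(p)}$ and $C_1,C_2'$ each of dimension $(p-1)^2$ over $\bF_p$. By the computation in the proof of the preceding lemma, $j$ is injective on $C_0$: the generators $c_0(y_1^i)c_0(y_2^j)$ restrict in $K_m^*(\bar R)$ to the linearly independent non-torsion classes $p^2y_1^iy_2^j$. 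Hence $j(C_0)$ is a free submodule of $\tilde gr(m)^*(R)/J_2$, so it meets the torsion subgroup $T$ of $\tilde gr(m)^*(R)/J_2$ only in $0$; since $C_1\oplus C_2'$ is killed by $p$, a short diagram chase gives $\Ker j\subseteq C_1\oplus C_2'$, $j^{-1}(T)=C_1\oplus C_2'$, and an induced map $C_0\to(\tilde gr(m)^*(R)/J_2)/T$ that is surjective (by hypothesis) and injective (by the above), hence $C_0\stackrel{\cong}{\to}(\tilde gr(m)^*(R)/J_2)/T$.

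Next I would reduce the corollary to a single dimension count. By the previous step, surjectivity of $j$ makes $j$ restrict to a surjection $C_1\oplus C_2'\twoheadrightarrow T$ of $\bF_p$-vector spaces whose kernel is exactly $\Ker j$; in particular $T=j(C_1\oplus C_2')$ has $\dim_{\bF_p}T\le 2(p-1)^2$, and $j$ is an isomorphism if and only if $\dim_{\bF_p}T=2(p-1)^2$. So the whole statement is equivalent to the lower bound $\dim_{\bF_p}T\ge 2(p-1)^2$; and by the injectivity criterion of the preceding lemma this is in turn equivalent to showing that surjectivity of $j$ forces condition $(**)$, i.e.\ that none of $py_1^iy_2^j,\ v_my_1^iy_2^j$ ($1\le i,j\le p-1$) lies in $\Img(res_{k_m})$.

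For the lower bound I would tensor with the faithfully flat $\bZ_{(p)}$-algebra $K_m^*=\bZ_{(p)}[v_m^{\pm1}]$, which preserves ranks of torsion subgroups. By the preceding lemma $K_m^*\otimes C(R',R'')/J_2\cong gr_{p^2}(\tilde K_m^*(\bar R))$, whose torsion submodule is free of rank $2(p-1)^2$ over $K_m^*/p$. The restriction $res_K\colon K_m^*(R)\to K_m^*(\bar R)$ respects geometric filtrations and kills $J_2$ (this is exactly the relation $res_K(c_m(y_1^i)c_0(y_2^j)-c_0(y_1^i)c_m(y_2^j))=0$ from the preceding lemma), so it induces a comparison map $\tilde gr(m)^*(R)/J_2\to gr_{p^2}(\tilde K_m^*(\bar R))$. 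The task is to show that when $j$ is onto, this comparison map composed with $j$ is the isomorphism of the preceding lemma, so that $T$ maps onto the rank-$2(p-1)^2$ torsion of $gr_{p^2}(\tilde K_m^*(\bar R))$ and therefore $\dim_{\bF_p}T\ge 2(p-1)^2$.

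The main obstacle is precisely this last step, which is where surjectivity is indispensable. For $R=R'\times R''$ (the Remark after the preceding lemma) $j$ is \emph{not} surjective, the torsion of $\tilde gr(m)^*(R)/J_2$ genuinely drops below $2(p-1)^2$, and the comparison map composed with $j$ fails to be the isomorphism of the preceding lemma, because $py_1y_2,v_my_1y_2\in\Img(res_{k_m})$ and the products $c_m(y_1)c_m(y_2),\ c_0(y_1)c_m(y_2)$ collapse or become redundant in $gr(m)^*(R)$. So the heart of the proof is the implication ``$j$ surjective $\Rightarrow(**)$'' — ruling out these collapses purely from the existence of a surjection — after which the preceding lemma finishes the argument; all the remaining steps are formal manipulations with the explicit presentations of $\S 3$ and $\S 4$.
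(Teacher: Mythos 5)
Your reduction is the right one, and it is the same reduction the paper makes: granting surjectivity, the whole content of the corollary is the implication ``$j$ surjective $\Rightarrow(**)$'', after which Lemma 4.1 supplies injectivity. But you stop exactly there. You name this implication ``the main obstacle'' and ``the heart of the proof'' and then assert that ``the preceding lemma finishes the argument; all the remaining steps are formal manipulations'' --- without ever proving the implication. The $R'\times R''$ example you cite shows only that the implication is not vacuous; it does not establish it. As written, the proposal is an (accurate) outline with the one non-formal step left open, so it is not a proof.

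The missing argument is short. If $j$ is surjective, then every class of $\tilde gr(m)^*(R)$ is represented by a product $c_k(y_1^i)c_{\ell}(y_2^j)$ with $k,\ell\in\{0,m\}$, so the image of $res_{k_m}$ on the relevant part of $k_m^*(\bar R)\cong k_m^*[y_1,y_2]/(y_1^p,y_2^p)$ is generated over $k_m^*$ (together with the contributions from $R'$ and $R''$ alone) by the elements
\[
res_{k_m}\bigl(c_k(y_1^i)c_{\ell}(y_2^j)\bigr)\in\{\,p^2y_1^iy_2^j,\ pv_my_1^iy_2^j,\ v_m^2y_1^iy_2^j\,\},
\]
all of which lie in $(p,v_m)^2\cdot k_m^*[y_1,y_2]/(y_1^p,y_2^p)$. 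Since neither $p$ nor $v_m$ lies in the ideal $(p^2,pv_m,v_m^2)\subset\bZ_{(p)}[v_m]$, comparing coefficients of $y_1^iy_2^j$ shows that $py_1^iy_2^j$ and $v_my_1^iy_2^j$ cannot lie in $Im(res_{k_m})$; this is exactly $(**)$. With that supplied, your remaining bookkeeping (injectivity on $C_0$, the dimension count on torsion) is correct but also redundant, since Lemma 4.1 already converts $(**)$ directly into injectivity of $j$.
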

\begin{proof}
From the surjectivity of $j$, the image of $res_{k_m}$ is generated by  the image  of
 $c_k(y_1^i)c_{\ell}(y_2^j)$
for $k,\ell=0$ or $m$. These are
\[p^2y_1^jy_2^j, \ pv_my_1^iy_2^j\ \ or\ \ v_m^2y_1^iy_2^j.\]
Hence $(**)$ in the preceding lemma is satisfied.
\end{proof}

\section{flag varieties}

Let $G$ be a (compact) Lie group,
and  $T\subset P\subset G$ be a  maximal torus and a parabolic subgroups.
 Let us write by  
$G_k$ and $T_k,P_k$ the  split  reductive group and split maximal torus ( and the parabolic group) over a field $k$ with $ch(k)=0$, 
corresponding to  Lie groups  $G$ and  $T,P$.  Let $B_k$ be the Borel
subgroup containing $T_k$.
Moreover let
$\bG$ be  a non-trivial  $G_k$-torsor.
 Then  
$X=\bG/P_k$ is a twisted flag variety.

Petrov, Semenov and Zainoulline develop the theory of generically splitting varieties.
We say that  $L$ is splitting field of a variety of $X$ if $M(X|_L)$ is isomorphic to a
direct sum of twisted Tate motives $\bT^{\otimes i}$
and the restriction map $i_L: M(X)\to M(X|_L)$ is isomorphic after tensoring $\bQ$.
A smooth scheme $X$ is said to be generically split over $k$ if its function field $L=k(X)$
is a  splitting field.  Note that  the complete flag variety $X=\bG/B_k$
is always generically split.

\begin{thm} (Theorem 3.7 in \cite{Pe-Se-Za})
Let $Q_k\subset P_k$ be parabolic subgroups of $G_k$ which are generically split over $k$.
Then there is a decomposition of motive 
\[M(\bG/Q_k)\cong M(\bG/P_k)\otimes H^*(P/Q)\]
where we identify $H^*(P/Q)\cong \oplus_i \bT^{i\otimes}$,
the sum of (degree changing) Tate motives.
\end{thm}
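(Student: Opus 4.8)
This is Theorem 3.7 of \cite{Pe-Se-Za}; I would prove it in three steps: establish the decomposition geometrically over $\bar k$, observe that the fibre $P_k/Q_k$ is \emph{split} and use this to make the relevant classes $k$-rational, and finally descend to $k$ by Rost nilpotence. Over $\bar k$ the $G_k$-torsor $\bG$ becomes trivial, so $\bar{\bG}/Q_k\cong\bar G/Q$, $\bar{\bG}/P_k\cong\bar G/P$ and $\pi$ becomes the split flag bundle $\bar G/Q\to\bar G/P$. The Bruhat decompositions give $M(\bar G/Q)\cong\bigoplus_{w\in W/W_Q}\bT^{\otimes\ell(w)}$ and $M(\bar G/P)\cong\bigoplus_{v\in W/W_P}\bT^{\otimes\ell(v)}$ (minimal-length coset representatives); the canonical surjection $W/W_Q\to W/W_P$ has every fibre in bijection with $W_P/W_Q$, with additive length $\ell(w)=\ell(v)+\ell(w')$ on minimal-length representatives. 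Grouping cells fibrewise yields the Leray--Hirsch isomorphism $\bar\Phi\colon M(\bar G/P)\otimes H^*(P/Q)\stackrel{\sim}{\to}M(\bar G/Q)$, where $H^*(P/Q)=\bigoplus_{w'\in W_P/W_Q}\bT^{\otimes\ell(w')}$ is the cohomology of the compact flag manifold $P/Q$ graded by Schubert-cell dimension, and $\bar\Phi$ is built from $\bar\pi^{*}$ and multiplication by classes $\bar a_{w'}\in CH^{\ell(w')}(\bar G/Q)$ whose restriction to a fibre is the Schubert basis of $CH^*(P/Q)$.

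Next I would exploit that $P_k/Q_k$ is split over $k$. Since $G_k$ is split, $P_k$ and $Q_k$ are split parabolics, so $P_k/Q_k$ is a split cellular $k$-variety with $CH^*(P_k/Q_k)\stackrel{\sim}{\to}CH^*(\overline{P_k/Q_k})=H^*(P/Q)$. Every representation of the split group $Q_k$ is defined over $k$, so the associated bundle $\bG\times^{Q_k}V$ is a $k$-rational vector bundle on $\bG/Q_k$ restricting on each fibre to the homogeneous bundle of $V$ on $P_k/Q_k$; its Chern classes lie in $\Img\bigl(CH^*(\bG/Q_k)\to CH^*(\bar{\bG}/Q_k)\bigr)$. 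Because $CH^*(P_k/Q_k)$ is generated by Chern classes of homogeneous bundles (together with $\pi^*$ of the base; in the mild torsion cases one supplements this by the structure theory invoked below), one can choose the lifts $\bar a_{w'}$ to be $k$-rational, so that $\bar\Phi$ is the base change of a morphism $\Phi\colon M(\bG/P_k)\otimes H^*(P/Q)\to M(\bG/Q_k)$ defined over $k$ (also using the obviously $k$-rational $[\Gamma_\pi]$ and $[\Gamma_\pi]^{t}$).

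It then remains to see that $\Phi$ is an isomorphism, knowing that $\bar\Phi$ is, and this is the crux. Here the hypothesis that $\bG/Q_k$ and $\bG/P_k$ are generically split enters: they satisfy the Rost nilpotence principle, so the kernel of $\mathrm{End}(M(\bG/Q_k))\to\mathrm{End}(M(\bar{\bG}/Q_k))$ is a nil ideal, and the image contains the complete orthogonal system of idempotents cut out by $\bar\Phi$ (this uses the structure theory of motives of generically split varieties---e.g.\ over $L=k(\bG/Q_k)$ both $\bG/Q_k$ and $\bG/P_k$ acquire Tate motivic decompositions, which are inherited over $k$). Lifting these idempotents along a ring surjection with nil kernel gives a decomposition $M(\bG/Q_k)\cong\bigoplus_{w'}e_{w'}M(\bG/Q_k)$ over $k$; checking over $\bar k$ that $e_{w'}M(\bG/Q_k)\cong M(\bG/P_k)\otimes\bT^{\otimes\ell(w')}$ (again via Rost nilpotence, source and target being generically split and isomorphic over $\bar k$) and summing over $w'$ yields $M(\bG/Q_k)\cong M(\bG/P_k)\otimes H^*(P/Q)$.

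The routine part is the first step, which is Schubert-cell bookkeeping for a split flag bundle. The main obstacle is the last step: passing from the decomposition over $\bar k$ to one over $k$. This is precisely where generic splitness is indispensable---for a general twisted $\bG/P_k$ no such decomposition exists---and it is overcome by combining Rost nilpotence with the $k$-rational classes produced in the middle step from the split fibre $P_k/Q_k$.
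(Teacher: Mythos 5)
The paper does not prove this statement at all: it is quoted verbatim as Theorem 3.7 of \cite{Pe-Se-Za}, so there is no internal proof to compare against. Judged on its own terms, your sketch has the right skeleton --- Bruhat/Leray--Hirsch decomposition over $\bar k$, rationality of the fibrewise correspondences, then descent by lifting idempotents through the nilpotent kernel of $\mathrm{End}(M(\bG/Q_k))\to\mathrm{End}(M(\bar\bG/Q_k))$ --- and this is indeed the method of Petrov--Semenov--Zainoulline.

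The genuine gap is in your middle step. You produce $k$-rational lifts $a_{w'}$ of the fibrewise Schubert basis by taking Chern classes of the associated bundles $\bG\times^{Q_k}V$. Those Chern classes are certainly $k$-rational, but they do not generate enough: the restriction $CH^*(\bG/Q_k)\to CH^*(P_k/Q_k)$ lands, on the Chern-class subring, inside the image of the characteristic map, which fails to be surjective exactly when the Levi quotient has torsion index $>1$ --- i.e.\ precisely for the groups ($Spin(n)$, $G_2$, $F_4$, $E_6,\dots$) to which this paper applies the theorem. Your parenthetical ``in the mild torsion cases one supplements this by the structure theory invoked below'' buries the essential mechanism: in \cite{Pe-Se-Za} the rationality of the required correspondences on $\bar\bG/Q_k\times\bar\bG/Q_k$ is extracted from the generic splitting hypothesis itself (over $L=k(\bG/Q_k)$ the torsor $\bG$ splits, so everything becomes $L$-rational, and their \S 3 machinery for generically split varieties converts $L$-rationality of the relevant idempotents into $k$-rationality), not from Chern classes of homogeneous bundles. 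As written, your argument would only establish the theorem for groups of torsion index $1$, where the conclusion is far less interesting; the final Rost-nilpotence step is fine but inherits this gap, since the idempotents you lift must first be shown to lie in the image of $CH^*(\bG/Q_k\times\bG/Q_k)$.
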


\begin{thm} (Theorem 5.13 in \cite{Pe-Se-Za}, 
\cite{Se})
The ($p$-localized) motive  $M(X)$ of $X=\bG/B_k$ is decomposed as
\[ M(X)_{(p)}=M(\bG/B_k)_{(p)}\cong 
R(\bG)\otimes(\oplus_i \bT^{\otimes s_i})\]
where $\bT$ is the Tate motive and $R(\bG)$ is  some
irreducible motive (called generalized Rost motive).
\end{thm}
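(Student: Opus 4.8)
The plan is to build the decomposition from three inputs: Rost's nilpotence theorem, the Krull--Schmidt property of Chow motives of geometrically split, geometrically cellular varieties, and Karpenko's theory of upper motives, combined with the computation (from \cite{Pe-Se-Za}) of the ring of rational cycles in terms of the $J$-invariant of $\bG$. First I would set up the formal framework. Since $\bar X=\bG/B_k|_{\bar k}\cong\GC/B$ is cellular, $M(\bar X)_{(p)}\cong\bigoplus_{w\in W}\bT^{\otimes\ell(w)}$, where $W$ is the Weyl group and $\ell(w)$ the length, so $M(X)_{(p)}$ is a geometrically split, geometrically cellular motive. By Chernousov--Gille--Merkurjev the kernel of $\mathrm{End}(M(X)_{(p)})\to\mathrm{End}(M(\bar X)_{(p)})$ consists of nilpotents; hence idempotents lift along this map and the category of summands of $M(X)_{(p)}$ has the Krull--Schmidt property. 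Thus $M(X)_{(p)}\cong\bigoplus_j M_j$ with each $M_j$ indecomposable, the family $\{M_j\}$ unique up to isomorphism and permutation; it remains to show every $M_j$ is a Tate twist of one fixed indecomposable.

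Next I would identify the indecomposables. By Theorem 5.1, for the maximal parabolic $P_k\supset B_k$ whose quotient $\bG/P_k$ is still generically split one has $M(\bG/B_k)\cong M(\bG/P_k)\otimes H^*(P/B)$, reducing the problem to $M(\bG/P_k)$ up to Tate twists. By Karpenko's theorem every indecomposable summand of $M(\bG/P_k)_{(p)}$ is, up to Tate twist, the upper motive of some $\bG/Q_k$; since all these $\bG/Q_k$ are generically split they are split by the common field $k(\bG/B_k)$, and a specialization argument on Chow groups (each such variety has a rational point over the function field of any other) shows their upper motives coincide. Call this common indecomposable $R(\bG)$: it is the unique summand of $M(X)_{(p)}$ through which the class $1\in CH^0(\bar X)$ survives. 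Using the $J$-invariant $J(\bG)=(j_1,\dots,j_r)$ and the description of $\bch^*(X)=\Img\bigl(CH^*(X)\to CH^*(\bar X)\otimes\bF_p\bigr)$ in \cite{Pe-Se-Za}, one identifies $\bch^*(\bar R(\bG))$ with the truncated polynomial algebra $\bF_p[e_1,\dots,e_r]/(e_1^{p^{j_1}},\dots,e_r^{p^{j_r}})$ inside $CH^*(\GC/B)\otimes\bF_p$; in particular $\bar R(\bG)$ is a sum of Tate motives. Comparing Poincar\'e polynomials (the total $\bar k$-rank of $\bigoplus_j\bar M_j$ is $|W|$, and $CH^*(\bar X)$ is free over the rational ``characteristic'' subring) then forces $M(X)_{(p)}\cong R(\bG)\otimes\bigl(\bigoplus_i\bT^{\otimes s_i}\bigr)$, the $s_i$ being the degrees of a homogeneous basis of $CH^*(\bar X)$ over $\bch^*(\bar R(\bG))$.

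The main obstacle is the middle step --- the uniqueness of the indecomposable summand up to Tate twist. This is exactly where the $J$-invariant machinery is indispensable: one must control the subalgebra $\bch^*(X)\subset CH^*(\bar X)\otimes\bF_p$ of rational cycles by analysing the action of the mod-$p$ Steenrod (Landweber--Novikov) operations on rational cycles, and then verify that the ``shift'' idempotents, which over $\bar k$ merely permute the Tate summands within a single orbit, lift to $k$ by Rost nilpotence and are pairwise conjugate. Once the indecomposable is shown to be unique, the tensor decomposition and the identification of $R(\bG)$ as the generalized Rost motive follow formally.
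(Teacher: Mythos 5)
This theorem is stated in the paper purely as a citation (Theorem~5.13 of \cite{Pe-Se-Za} and \cite{Se}); the paper contains no proof of it, so there is nothing internal to compare your argument against. Judged as a reconstruction of the Petrov--Semenov--Zainoulline proof, your outline is essentially correct and follows the standard strategy: geometric splitness, Rost nilpotence (Chernousov--Gille--Merkurjev) to lift idempotents and obtain Krull--Schmidt, control of the rational cycles $\bch^*(X)\subset CH^*(\bar X)\otimes\bF_p$ via the $J$-invariant and Steenrod/Landweber--Novikov operations, and a freeness/Poincar\'e-polynomial count to read off the shifts $s_i$. Two remarks. First, your route through Karpenko's upper-motive theorem is an anachronistic shortcut relative to \cite{Pe-Se-Za}: their argument does not invoke upper motives but directly manufactures the pairwise-orthogonal projectors from products of rational cycles with dual classes, using that $CH^*(\bar X)\otimes\bF_p$ is free over the subalgebra corresponding to $\bar R(\bG)$; the upper-motive formulation buys a cleaner uniqueness statement but is logically a later development. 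Second, and this is the only substantive gap, both Karpenko's theorem and the projector construction are native to $\bF_p$-coefficients, whereas the statement is over $\bZ_{(p)}$; you need the standard but nontrivial lifting of an $\bF_p$-motivic decomposition of a geometrically split variety satisfying nilpotence to a $\bZ_{(p)}$-decomposition (Vishik, Petrov--Semenov, De~Clercq), which your sketch does not mention. Finally, your closing paragraph re-raises as ``the main obstacle'' the uniqueness of the indecomposable summand, which your second paragraph has already disposed of via upper motives; once that uniqueness is granted, the tensor decomposition is formal and the $J$-invariant is needed only to identify the Poincar\'e polynomial of $\bar R(\bG)$ and hence the exponents $s_i$.
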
 

It is known that 
the  
 generalized Rost motive $R(\bG)$ is 
isomorphic to  the original 
Rost  motive $R_n$ in the following cases.

(i)\quad  cases $G$ are of type (I) \\ 
We  consider here the cases
\[ gr H^*(G;\bZ/p)\cong \bZ/p[y]/(y^p)\otimes 
\Lambda(x_1,...,x_{\ell})\]
where the degree are $|y|=2(p+1)$, $|x_i|=odd$,
and where $\Lambda(x_1,...,x_{\ell})$ is the exterior algebra
generated by $x_1,...,x_{\ell}$.
The following simple Lie groups satisfy 
the above isomorphism \cite{Mi-Tod};
\[ (G,p)=\begin{cases}   G_2,\ F_4,\ E_6, \ Spin(7),\ Spin(8),\ Spin(9)\quad for \ p=2\\ F_4,\ E_6,\ E_7\quad for \ p=3\\
E_8\quad p=5.
\end{cases}\]
We call that these (simply connected) groups $G$ are of type $(I)$.  Then from Petrov-Semenov-Zainoulline,
we know $R(\bG)\cong R_2$.  From Lemma 2.1 and Corollary 3.6,
we get
\begin{lemma}  We have the isomorphism
$gr(1)^*(X)\cong CH^*(X)$.
\end{lemma}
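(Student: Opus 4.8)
The plan is to reduce the claim to the single Rost motive $R_2$ via the motivic decomposition of Theorem 5.2, and then to quote the computation of $gr(1)^*(R_2)$ from $\S 3$. By Lemma 2.1 there is an ideal $I(1)\subset CH^*(X)$ with $gr(1)^*(X)\cong CH^*(X)/I(1)$, so it suffices to show $I(1)=0$ for $X=\bG/B_k$. Since $G$ is of type $(I)$, we have $R(\bG)\cong R_2$ (as recalled just above), so Theorem 5.2 gives
\[ M(X)_{(p)}\cong \oplus_i\,(R_2\otimes \bT^{\otimes s_i}), \]
i.e.\ $M(X)_{(p)}$ is a finite direct sum of Tate twists of $R_2$; correspondingly $CH^*(X)$ is the analogous (degree-shifted) direct sum of copies of $CH^*(R_2)$.

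Next I would use that the geometric filtration and the ideal $I(1)$ are built from (motivic) Atiyah--Hirzebruch spectral sequences that are functorial and additive with respect to direct sums of motives, and compatible with Tate twists (this is part of the setup in \cite{YaA}, \cite{YaB}). Applying this to the decomposition above, the isomorphism $CH^*(X)/I(1)\cong E_\infty^{2*,*,0}$ of Lemma 2.1 breaks up as the direct sum over $i$ of the corresponding degree-shifted statements for $R_2$, so that $I(1)$ for $X$ is the analogous direct sum of copies of $I(1)$ for $R_2$. It then remains only to note that $I(1)=0$ for $R_2$: this is Corollary 3.6 (equivalently, in Corollary 3.5 the generating set $\{c_i(y^j)\mid i\ne 0,m\}$ of $I(m)$ is empty when $n=2$ and $m=1$, since then the index range $1\le i\le n-1$ forces $i=1=m$). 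Hence $I(1)=0$ for $X$, and $gr(1)^*(X)=CH^*(X)/I(1)=CH^*(X)$.

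The one point that needs care is the middle step: that $I(1)$ of a finite direct sum of Tate-twisted motives is the direct sum of the $I(1)$'s of the summands, with the appropriate degree shifts. This is routine once one recalls that the Atiyah--Hirzebruch spectral sequence, its differentials, its $E_\infty$-term and the resulting geometric filtration are all functorial and additive in the motive, while a Tate twist $\bT^{\otimes s}$ merely relabels bidegrees; beyond this bookkeeping I expect no genuine difficulty, since the statement then rests entirely on the vanishing $I(1)=0$ for $R_2$ already established in $\S 3$.
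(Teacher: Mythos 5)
Your proposal is correct and follows essentially the same route as the paper, which derives the lemma in one line from the decomposition $R(\bG)\cong R_2$ (Theorem 5.2), Lemma 2.1, and the vanishing $I(1)=0$ for $R_2$ (Corollary 3.6). You merely make explicit the additivity of the Atiyah--Hirzebruch spectral sequence over the motivic direct sum, which the paper leaves implicit.
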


Let $BB_k$ be the classifying space of $B_k$.
Then we have the classifying map
\[c:CH^*(BB_k)\to CH^*(X)=CH^*(\bG/B_k).\]
Let us write
$CH^*(BB_k)=S(t)\cong \bZ_{(p)}[t_1,...,t_{\ell}]$
where $|t_i|=2$ and $\ell=rank_pG$.  
There is a regular sequence  $(b_1,...,b_{\ell})$ in $S(t)$ such that  we have the graded additive isomorphism 
\[ CH^*(\oplus_i\bT^{\otimes s_i})
\cong S(t)/(b_1,...,b_{\ell}).\]
The Chow ring
of $X$ is known.
\begin{thm} (\cite{Ro2},\cite{YaC})
Let $G$ be  of type $(I)$ and $rank(G)=\ell$.
Then $2p-2 \le \ell$, and we can take $b_i\in CH^*(BB_k)$ for $1\le i\le \ell$
such that $ c(b_{2i-1})=c_1(y^i)$ and $c(b_{2i})=c_0(y^i)$, 
and there are isomorphisms
\[ CH^*(\bar X)/p\cong \bZ/p[y]/(y^p)\otimes S(t)/
(b_{i'}|0\le i'\le \ell)\]
\[ CH^*(X)/p\cong S(t)/(p, b_ib_j,b_k|1\le  i,j\le 2p-2<k\le \ell).\]
\end{thm}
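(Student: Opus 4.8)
The plan is to reduce the computation to the known structure of $CH^*(R_2)$ (Theorem~3.1) via the motivic decomposition of $X$, and then to control the characteristic (classifying) map $c$ by comparison with the classical topology of $G/T$. First invoke Theorem~5.3: since $G$ is of type $(I)$, the generalized Rost motive is the original one, $R(\bG)\cong R_2$ (the case $n=2$), so $M(X)_{(p)}\cong R_2\otimes(\oplus_i\bT^{\otimes s_i})$. Because $\Omega^*$ and $CH^*$ are additive on motives and $res_{\Omega}$ is injective on each $R_2$-summand (Theorem~3.1), this gives a $\bZ_{(p)}$-module isomorphism $CH^*(X)\cong CH^*(R_2)\otimes_{\bZ_{(p)}}N$ and, over $\bar k$, $CH^*(\bar X)\cong CH^*(\bar R_2)\otimes_{\bZ_{(p)}}N\cong\bZ_{(p)}[y]/(y^p)\otimes_{\bZ_{(p)}}N$, where $N=CH^*(\oplus_i\bT^{\otimes s_i})$ is a finite free graded $\bZ_{(p)}$-module of rank $|W|/p$ ($W$ the Weyl group). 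By the Example after Theorem~3.1, $CH^*(R_2)/p$ is the $\bZ/p$-span of $1,c_0(y),\dots,c_0(y^{p-1}),c_1(y),\dots,c_1(y^{p-1})$, so both formulas come down to presenting $N$ and locating where the $c_s(y^i)$ sit.

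Next I would analyze $c\colon S(t)=CH^*(BB_k)\to CH^*(X)$. Over $\bar k$ its reduction $c_{\bar k}\colon S(t)\to CH^*(\bar X)=CH^*(G_k/B_k)$ is surjective, since the Chow ring of a split complete flag variety is generated in codimension one by Schubert divisors, which are the images of the $t_i$; hence $CH^*(\bar X)\cong S(t)/\Ker(c_{\bar k})$. Comparing with the first step, and since $N$ is finite free while $S(t)$ is a polynomial ring in $\ell$ variables, $\Ker(c_{\bar k})$ contains mod $p$ a regular sequence $b_1,\dots,b_\ell$ with $N/p\cong S(t)/(p,b_1,\dots,b_\ell)$, which yields the first displayed isomorphism $CH^*(\bar X)/p\cong\bZ/p[y]/(y^p)\otimes S(t)/(p,b_1,\dots,b_\ell)$. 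The refinement needed is to choose $b_1,\dots,b_{2p-2}$ to be the ``special'' relations: using the $\Omega^*$-version of $c$, one builds elements of $\Omega^*(BB_k)$ out of first Chern classes of homogeneous line bundles combined through the formal group law, whose restrictions to $\Omega^*(\bar R_2)\subset\Omega^*(\bar X)$ are $v_1y^i$ and $py^i$; by injectivity of $res_{\Omega}$ on $\Omega^*(X)$ (Theorem~3.1 applied to the $R_2$-summands) these map in $CH^*(X)$ exactly to $c_1(y^i)$ and $c_0(y^i)$, i.e.\ $c(b_{2i-1})=c_1(y^i)$ and $c(b_{2i})=c_0(y^i)$ for $1\le i\le p-1$. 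Since the $2p-2$ classes $c_0(y^i),c_1(y^i)$ lie in pairwise distinct degrees and account for $2p-2$ minimal generators of the complete-intersection ideal cutting out $N$, one gets $2p-2\le\ell$ (which can also be read off the list of type $(I)$ groups).

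For the second formula I would show that $c\bmod p$ is surjective onto $CH^*(X)/p$: the subring it generates contains $N/p$ (all images of $S(t)$) and the classes $c_0(y^i),c_1(y^i)$ just produced, hence all of $CH^*(R_2)/p\otimes N/p$, and a count against $\dim_{\bZ/p}\bigl(CH^*(R_2)/p\otimes N/p\bigr)=(2p-1)|W|/p$ shows that nothing else appears and no further relations are needed. The kernel is then forced to be $(p,\,b_ib_j,\,b_k\mid 1\le i,j\le 2p-2<k\le\ell)$: each $b_k$ with $k>2p-2$ must vanish because, after projecting away the $y$-part, its restriction lies in $\Ker(res_{CH})$ while $N/p$ already has the correct rank; and for $1\le i,j\le 2p-2$ the product $c(b_ib_j)=c_s(y^l)c_{s'}(y^{l'})$ equals, by Lemma~3.2 and the description of $\Omega^*(R_2)$ in Theorem~3.1, one of $p\,c_0(y^{l+l'})$, $p\,c_1(y^{l+l'})$, or $0$ (when $l+l'\ge p$), each of which is $0$ in $CH^*(X)/p$. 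This gives $CH^*(X)/p\cong S(t)/(p,b_ib_j,b_k)$ as asserted.

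The step I expect to be the main obstacle is proving that $c\bmod p$ is genuinely surjective — equivalently, that the torsion classes $c_1(y^i)\in CH^*(X)$ are actual characteristic classes in the image of $\Omega^*(BB_k)$, and that the regular sequence $b_1,\dots,b_\ell$ can be chosen compatibly with $c(b_{2i-1})=c_1(y^i)$, $c(b_{2i})=c_0(y^i)$. This is exactly where the type $(I)$ hypothesis must be used in earnest: one has to bring in the explicit structure $gr\,H^*(G;\bZ/p)\cong\bZ/p[y]/(y^p)\otimes\Lambda(x_1,\dots,x_\ell)$ (Mimura--Toda) and analyze the transgressions of $x_1,\dots,x_\ell$ and of the class $y$ in the fibration $G/T\to BT\to BG$ in order to pin down $b_1,\dots,b_\ell$; everything else is bookkeeping on top of Theorems~3.1 and~5.3.
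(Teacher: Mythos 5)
The paper states this result without proof (it is quoted from \cite{Ro2} and \cite{YaC}), so your sketch can only be judged on its own merits. Its overall shape --- reduce to $CH^*(R_2)$ via the motivic decomposition of Theorem 5.3, then control the characteristic map $c$ --- is the right one, but two load-bearing steps are wrong or unjustified.

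First, your claim that $c_{\bar k}\colon S(t)\to CH^*(\bar X)$ is surjective because ``the Chow ring of a split complete flag variety is generated by Schubert divisors'' is false precisely for the groups at hand. $CH^*(G_k/B_k)_{(p)}$ is multiplicatively generated by degree-one classes only when the ($p$-part of the) torsion index is $1$, and type $(I)$ groups are exactly those with $p$-torsion in $H^*(G)$: the class $y$ of codimension $p+1$ is \emph{not} in the image of $S(t)$ over $\bar k$ --- only $py=c_0(y)$ and $v_1y$ are. Your own target formula already tells you this: if $c_{\bar k}$ were surjective, $CH^*(\bar X)/p$ would be a quotient of $S(t)/p$, whereas the first displayed isomorphism has $\bZ/p[y]/(y^p)$ as an extra tensor factor outside $S(t)/(b_{i'})$. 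So the derivation of the regular sequence from $\Ker(c_{\bar k})$ as you set it up does not go through; one must instead produce $b_1,\dots,b_\ell$ from the fibration $G/T\to BT\to BG$ (Toda's method) or from $\Omega^*(BB_k)\to\Omega^*(\bar X)$, where the image is $BP^*\{1\}\oplus I_2\otimes\bZ_{(p)}[y]^+/(y^p)$ on the Rost summand (Theorem 3.1), not all of $BP^*[y]/(y^p)$.

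Second, the surjectivity of $c\colon S(t)\to CH^*(X)/p$ over $k$ --- which is what the second displayed formula encodes --- cannot be obtained from a dimension count; a count can only show that an already-established surjection is an isomorphism. This surjectivity is exactly Karpenko's theorem for \emph{versal} flag varieties (Theorem 6.4 and Corollary 6.5 of the paper), and it is the place where the genericity of the torsor $\bG$ enters; your sketch never invokes it. Relatedly, your argument that $c(b_k)=0$ in $CH^*(X)$ for $k>2p-2$ is not sound as stated: such a class lies in $T(X)=\Ker(res_{CH})$, which is spanned by the torsion classes $c_1(y^i)\otimes N$, so it could a priori be a nonzero torsion element; ruling this out requires the degree and module bookkeeping carried out in \cite{YaC}, not just the rank of $N/p$. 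The computation of the products $c(b_ib_j)$ via Theorem 3.1 and Lemma 3.2 (giving $pc_0(y^{i+j})$, $pc_1(y^{i+j})$ or $0$, all zero mod $p$) is correct.
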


(ii) \quad The Pfister quadrics.

Let  $(G,p)=(SO(2^{n+1})+2,2)$ and $P=SO(2^{n+1})\times SO(2)$.  Then the quadric $X_q$ defined by the  
$(n+1)$ Pfister form $q$ is written as
$ X_q\cong G/P$.
It is well known that $R(\bG)\cong R_n$.
For ease of arguments, we consider the maximal
neighbor of the $(n+1)$ Pfister form $q'$. Then
\[X_{q'}\cong SO(2^{n+1}+1)/(SO(2^{n+1}-1)\times SO(2))\]
is generically split and 
 $R(\bG)\cong R_n$ also.

\begin{thm} (\cite{Ro1},\cite{Tod-Wa},\cite{YaE})
Let $X=X_{q'}=\bG/P_k$ be the maximal neighbor of the $(n+1)$ Pfister quadric.  Then  we have the ring isomorphisms 
\[ CH^*(\bar X)\cong \bZ_{(2)}[y,h]/(y^2,u_0=2y)\quad
where\ \  u_0=h^{2^{n}-1},\  |h|=2,\]
\[CH^*(X)\cong 
 \bZ_{(2)}[h,u_1,...,u_{n-1}]/(u_iu_j,2u_k)\]
where $0\le i,j,k\le n-1, k\not =0$ and 
 $u_i=c_i(y)=v_iy\in \Omega^*(\bar X)$.
\end{thm}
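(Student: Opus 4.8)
The plan is to compute both sides of the claimed description of $CH^*(X)$ and $CH^*(\bar X)$ for the maximal neighbor $X = X_{q'}$ of the $(n+1)$-Pfister quadric by combining the motivic decomposition $M(X)_{(p)} \cong R_n \otimes (\bigoplus_i \bT^{\otimes s_i})$ from Theorem 5.3 with the known $\Omega^*$- and $CH^*$-computations for $R_n$ in Theorem 3.1. First I would establish the integral Chow ring of $\bar X = X|_{\bar k}$ directly: since $X_{q'}$ split over $\bar k$ is the split maximal-neighbor quadric, it is a smooth cellular variety whose Chow ring is the standard one generated by the hyperplane class $h$ (with $|h|=2$) and the class $y$ of a maximal linear subspace, subject to $y^2 = 0$ (because we are in the maximal-neighbor, not the full Pfister quadric, and $n+1 \le \dim$ forces this relation in the relevant degree) and the relation $u_0 := h^{2^n - 1} = 2y$ linking the middle-dimensional hyperplane power to twice the linear-space class; this is the classical computation for odd-dimensional (split) quadrics, and I would cite \cite{Ro1}, \cite{Tod-Wa} for it. This gives the first displayed isomorphism $CH^*(\bar X) \cong \bZ_{(2)}[y,h]/(y^2, h^{2^n-1} - 2y)$.

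Next, for $CH^*(X)$ itself, the strategy is to transfer the $R_n$-computation through the Tate-motive decomposition. By Theorem 5.3 and the compatibility of Chow groups with motivic direct sums, $CH^*(X)$ is a free module over $CH^*(\bar{\text{(Tate part)}})$ — i.e. over $\bZ_{(2)}[h]/(h^{2^n})$ roughly speaking, the Chow ring of the $\bT$-summands — tensored with the ``$R_n$-part'', and one reads off generators: $h$ comes from the Tate summands, and the torsion/$p$-divisible structure comes from $R_n$. Concretely, identifying $y \in CH^*(\bar X)$ with the generator of $CH^*(\bar R_n)$ (here $p=2$, so $\bZ_{(2)}[y]/(y^2)$), the classes $c_i(y) \in \Omega^*(X)$ from Theorem 3.1 with $res_\Omega(c_i(y)) = v_i y$ descend to classes $u_i := c_i(y) \in CH^*(X)$, with $u_0 = c_0(y)$ mapping to $v_0 y = p y = 2y$ and the $u_i$ for $1 \le i \le n-1$ being the $2$-torsion generators. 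The relations $u_i u_j = 0$ (for all $i,j$) and $2 u_k = 0$ (for $k \ne 0$) then come from pulling back Lemma 3.2 (the relation $v_s c_r(y^j) = v_r c_s(y^j)$ in $\Omega^*(R_n)$, applied with $y^2 = 0$ so only $j=1$ survives) together with the structure of $Im(res_\Omega)$ in Theorem 3.1, which forces products of the $c_i(y)$'s to vanish since $y^2 = 0$ kills everything in $CH^{\ge 2|y|}$. The one subtlety is locating $h$ inside this picture and checking $h$ is a genuine generator with no extra relation beyond what the $u_i$ impose — this follows because $h$ restricts to the hyperplane class which is non-torsion and generates the Tate part, and the only relation among the $u_i$ and $h$ is $u_0 = 2y = $ (the image of $h^{2^n-1}$ under a comparison with $CH^*(\bar X)$), but over $k$ the class $h^{2^n-1}$ itself need not equal $2y$-of-anything rational, so actually in $CH^*(X)$ one gets $\bZ_{(2)}[h, u_1, \dots, u_{n-1}]/(u_i u_j, 2 u_k)$ freely, with $u_0$ not among the generators because it becomes $2y \notin CH^*(X)$ — I would need to be careful here about whether $u_0$ survives to $X$ or only to $\bar X$.

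The key steps in order: (1) compute $CH^*(\bar X)$ by classical split-quadric geometry; (2) invoke Theorem 5.3 to write $M(X)_{(p)} \cong R_n \otimes (\bigoplus \bT^{\otimes s_i})$ and hence express $CH^*(X)$ additively; (3) import the generators $c_i(y)$ and relations from Theorem 3.1 and Lemma 3.2 for $R_n$, renaming $c_i(y) = u_i = v_i y$; (4) match the Tate-summand contribution with the powers of $h$ and verify the only surviving relations are $u_i u_j = 0$ and $2 u_k = 0$ for $k \ne 0$; (5) assemble the ring presentation. The main obstacle I expect is step (4): controlling the multiplicative structure — specifically, verifying that $h^{2^n - 1}$ is \emph{not} $2$-divisible in $CH^*(X)$ (so that $u_0$ does not appear as a generator, only $h$ does) and that no further relations among $h$ and the $u_i$ are forced by the motivic decomposition, since the decomposition is a priori only additive and one must work in $\Omega^*(X) \hookrightarrow \Omega^*(\bar X)$ (using injectivity of $res_\Omega$ from Theorem 3.1, which holds because $X$ is generically split) to pin down products. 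Comparing with the simpler $(I)$-type computation of Theorem 5.5, where the analogous regular sequence $(b_i)$ is used, suggests the right bookkeeping device is to exhibit the $u_i$ as images of an explicit regular sequence in $CH^*(BP_k)$ and argue the quotient presentation from there.
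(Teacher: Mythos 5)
Your overall route --- decompose $M(X_{q'})$ into Rost motives tensored with Tate twists, import $CH^*(R_n)$ and the classes $c_i(y)$ from Theorem 3.1, and control products via the injection $res_{\Omega}:\Omega^*(X)\hookrightarrow\Omega^*(\bar X)$ --- is the standard one and is essentially what the cited sources do. Note that the paper itself does not reprove this theorem: its entire argument is the Remark following the statement, which quotes from \cite{YaE} the $\bZ_{(2)}[h]$-module isomorphism $CH^*(X)\cong \bZ_{(2)}[h]/(h^{2^{n+1}-2})\oplus \bZ/2[h]/(h^{2^n-1})\{u_1,\ldots,u_{n-1}\}$ with $u_iu_j=0$, and observes that setting $u_0=h^{2^n-1}$ converts this into the stated ring presentation. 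Your proposal is a reasonable reconstruction of that omitted proof.

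However, your step (4) contains a genuine confusion about $u_0$ that would derail the presentation if carried out as written. You suggest that ``$u_0$ does not appear as a generator because it becomes $2y\notin CH^*(X)$'' and that one must check that $h^{2^n-1}$ is not $2$-divisible in $CH^*(X)$ ``so that $u_0$ does not appear.'' This is backwards: $u_0$ is by definition the class $h^{2^n-1}\in CH^*(X)$, a power of the hyperplane class and hence trivially rational, and its restriction to $\bar X$ equals $2y$; the class that fails to be rational is $y$, not $2y$. More importantly, the relations indexed by $i=0$ or $j=0$ are not optional: $u_0^2=h^{2^{n+1}-2}=0$ and $u_0u_j=h^{2^n-1}u_j=0$ are precisely the relations that bound the powers of $h$ and reproduce the module structure above. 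If you discard $u_0$ from the relation set, $h$ becomes a free polynomial variable and the resulting ring is infinite-dimensional, hence wrong. These relations are in fact easy: every such product sits in codimension at least $2(2^n-1)=2^{n+1}-2>\dim X_{q'}$, or alternatively one lifts to $\Omega^*(X)$, where $res_{\Omega}$ is injective and $c_i(y)c_j(y)\mapsto v_iv_jy^2=0$; no divisibility analysis of $h^{2^n-1}$ is required. A secondary point: Theorem 5.3 as stated applies to $\bG/B_k$, whereas $X_{q'}=\bG/P_k$; for the quadric you should instead invoke Rost's decomposition of the maximal Pfister neighbor into $R_n\otimes\bT^{j}$ for $0\le j\le 2^n-2$ (this is why \cite{Ro1} is cited), and then verify that the Tate summands are carried by powers of $h$, which is what identifies $c_0(y)$ with $h^{2^n-1}$.
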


{\bf Remark.}  In \cite{YaE}, we have the $\bZ_{(2)}[h]$-module isomorphism
\[ CH^*(X)\cong \bZ_{(2)}[h]/(h^{2^{n+1}-2})\oplus \bZ/2[h]/(h^{2^n-1})\{u_1,...,u_{n-1}\}\]
with the multiplication $u_iu_j=0$.  Identifying $h^{2^n-1}=u_0$, 
the isomorphism in the theorem is immediate.

\begin{lemma}  For $X=X_{q'}$, we have the isomorphism
\[gr(m)^*(X)\cong \begin{cases}
  CH^*(X)/I(m)\quad if\  1\le m\le n-1\\
  CH^*(X)\quad if \ m\ge n.
\end{cases} \]
where $I(m)=Ideal(u_1,...,\hat u_m,...,u_{n-1}). $
\end{lemma}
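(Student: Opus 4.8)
The plan is to reduce the statement about $gr(m)^*(X)$ to the already-known Morava $K$-theory computation for the Rost motive $R_n$, using the motivic decomposition $M(X)_{(2)} \cong R(\bG) \otimes (\oplus_i \bT^{\otimes s_i})$ from Theorem 5.3 together with the identification $R(\bG) \cong R_n$ in the Pfister-quadric case (ii). Since the geometric filtration and the ideal $I(K_m)$ are defined via the Atiyah--Hirzebruch spectral sequence, and that spectral sequence is compatible with direct sums of twisted Tate motives (on which it degenerates trivially), it suffices to understand $gr(m)^*(R_n)$ and then tensor up with the Tate pieces. Concretely, first I would invoke Corollary 3.6, which already gives $gr(m)^*(R_n) \cong CH^*(R_n)/I(m)$ with $I(m) = \bZ/p\{c_i(y^j)\mid i \neq 0, m\}$ for $1 \le m \le n-1$, and $gr(m)^*(R_n) \cong CH^*(R_n)$ (i.e. $I(m) = 0$) for $m \ge n$ by Lemma 3.3.

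Next I would transport this through the decomposition. Under $M(X) \cong R_n \otimes (\oplus_i \bT^{\otimes s_i})$, one has $CH^*(X) \cong CH^*(R_n) \otimes S(t)/(b_1,\dots,b_\ell)$ additively (the Tate part contributing $S(t)/(b_1,\dots,b_\ell)$), and more to the point the element $u_i = c_i(y) \in \Omega^*(\bar X)$ is the image of $v_i y$, exactly as $c_i(y) \in \Omega^*(\bar R_n)$. The key observation is that the differential structure in the AHSS for $X$ is "controlled by" the $R_n$ summand: the classes $u_i$ with $i \neq 0,m$ are precisely the ones killed by $v_m$ in $K_m^*(X)$ (since $v_m u_i = v_m v_i y = v_i (v_m y) = v_i u_m$ and $v_i = 0$ in $k_m^*$ for $i \neq 0,m$, so $v_m u_i = 0$ in $k_m^*(\bar X)$, forcing $u_i \in I(K_m)$), while $h$, $u_0$ (with $u_0 = 2y$) and $u_m = v_m y$ survive. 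I would run the same argument as in the proof of Corollary 3.6: since $k_m^*(X) \cong \Omega^*(X) \otimes_{BP^*} k_m^*$, the relations $v_m u_i = 0$ and $2 u_i = 0$ ($i \neq 0$) combined with localization at $v_m$ show $u_i = 0$ in $K_m^*(X)$ for $i \neq 0,m$, so these $u_i$ lie in $I(m)$; conversely the surviving classes $h, u_0, u_m, \dots$ are detected in $K_m^*(X)$, so $I(m)$ is exactly the ideal generated by $u_1,\dots,\hat u_m,\dots,u_{n-1}$. For $m \ge n$, the AHSS degenerates because $|R_n| = 2(p^n-1) = 2(2^n - 1)$ is smaller than $|v_m| = 2p^m - 2 = 2^{m+1} - 2$ once $m \ge n$ (as in the Remark after Lemma 3.3), hence no differentials hit $CH^*$ and $I(m) = 0$.

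I should be slightly careful about one point: in case (ii) the Rost motive here is $R_n$ (not $R_2$), and the relevant range is $1 \le m \le n-1$, so I must genuinely use the general Corollary 3.6 rather than just the $n=2$ Corollary 3.7. I would also want to note, using the explicit $\bZ_{(2)}[h]$-module description from the Remark after Theorem 5.6, that the ideal $I(m)$ does not accidentally meet the free $\bZ_{(2)}[h]/(h^{2^{n+1}-2})$ part, so that $CH^*(X)/I(m)$ has the expected form $\bZ_{(2)}[h]/(h^{2^{n+1}-2}) \oplus \bZ/2[h]/(h^{2^n-1})\{u_m\}$.

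The main obstacle I anticipate is justifying that the geometric filtration of $K_m^*$ on $X$ really is "detected" on the $R_n$ summand, i.e. that passing from $R_n$ to $X = R_n \otimes (\text{Tate})$ introduces no new $v_m$-torsion beyond $I(m)$ coming from $R_n$ and no cancellation of the expected torsion. This is essentially the statement that tensoring with split Tate motives is exact for all the theories involved and commutes with the formation of $I(K_m)$; it follows from the fact that the AHSS is a module over $H^{*,*}(\oplus_i \bT^{\otimes s_i})$ and the latter is a free $CH^*$-module concentrated in the right bidegrees, but writing this out cleanly — and matching the resulting ideal with $Ideal(u_1,\dots,\hat u_m,\dots,u_{n-1})$ via Theorem 5.6 — is where the real work lies.
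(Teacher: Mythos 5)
Your argument is correct and is exactly the one the paper intends: Lemma 5.6 is stated without proof, as an immediate consequence of the motivic decomposition of $X_{q'}$ into Tate twists of $R_n$ together with Corollary 3.6 (for $1\le m\le n-1$) and Lemma 3.3 (for $m\ge n$), which is precisely the route you take, with the identification $u_i=c_i(y)=v_iy$ and the relations $v_mu_i=v_iu_m=0$ doing the work. The reference slips (Theorem 5.2 for the decomposition, Theorem 5.5 for $CH^*(X_{q'})$) are harmless.
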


More generally, a quadric $X_{\xi}$ is called  excellent if
its motive $M(X_{\xi})$ is isomorphic to  a  direct sum of  Rost motives
$R_s\otimes \bT^{r}$.  
\begin{thm} (\cite{YaE})
Let $X_{\xi}$ be an odd dimensional excellent anisotropic quadric with
$2^n-1\le dim(X_{\xi})=d\le 2^{n+1}-2$.
Then there are elements $c_1(d),...,c_{n-1}(d)$ 
in $CH^*(X_{\xi})$ and positive integers
$d_1(d)\ge...\ge d_{n-1}(d)$ such that
\[CH^*(X_{\xi})\cong \bZ_{(2)}[h,c_1(d),...,c_{n-1}(d)]/R,\]
\[\quad where\ \ R=(h^{d+1},h^{d_i(d)}c_i(d),2c_i(d),c_i(d)c_j(d)|
1\le i,j\le n-1).\]
 \end{thm}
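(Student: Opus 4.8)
The plan is to follow the same strategy used in \cite{YaE} for the Pfister quadric $X_{q'}$, which is essentially Theorem 5.7, and to adapt it to the excellent quadric $X_\xi$ using the Rost-motivic decomposition $M(X_\xi)\cong\bigoplus_s R_s\otimes\bT^{r}$. First I would pin down exactly which Rost motives $R_s$ (with which Tate twists) appear in $M(X_\xi)$: for an odd dimensional excellent anisotropic quadric with $2^n-1\le d\le 2^{n+1}-2$ this is classical, the ``largest'' summand being $R_{n}$ (the Pfister motive attached to the leading $n$-fold subform) and the remaining summands being excellent quadrics of strictly smaller dimension, handled inductively. The generators $c_i(d)$ should then be read off as $c_i(y)=v_iy$ coming from the $R_n$-summand exactly as in Theorem 3.1, while $h$ is the hyperplane class; the exponents $d_i(d)$ record where the $\bZ/2[h]$-module $\bZ/2[h]/(h^{d_i(d)})\{c_i(d)\}$ truncates, determined by the splitting pattern of $\xi$.

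Next I would compute $\Omega^*(X_\xi)$, or at least its image under $res_\Omega$ in $\Omega^*(\bar X_\xi)\cong BP^*[h]/(h^{d+1})$ (using that $\bar X_\xi$ is cellular with $CH^*(\bar X_\xi)\cong\bZ_{(2)}[h]/(h^{d+1})$ in odd dimension, up to the identification of the middle-dimensional class). Theorem 3.1 gives $\Img(res_\Omega)$ for each Rost summand $R_s$ as $BP^*\{1\}\oplus I_s\otimes\bZ_{(2)}[y]^+/(y^2)$, and for the quadric these splices together: the torsion classes $c_i(d)$ come with relations $2c_i(d)=v_i\cdot(\text{hyperplane power})$ in $\Omega^*$, collapsing to $2c_i(d)=0$ and $c_i(d)c_j(d)=0$ in $CH^*$ by the injectivity of $res_\Omega$ and the same $v_sc_r-v_rc_s=0$ computation as Lemma 3.2. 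The relations $h^{d+1}=0$ and $h^{d_i(d)}c_i(d)=0$ then follow from the explicit $\bZ_{(2)}[h]$-module structure, which is the excellent-quadric analogue of the Remark after Theorem 5.7. Finally I would check that these relations generate all relations by a dimension/rank count: the stated presentation has the right Poincar\'e series over $\bZ_{(2)}$ and over $\bZ/2$ to match $\bigoplus_s CH^*(R_s)\otimes(\text{Tate part})$, which forces the map from the polynomial ring modulo $R$ to be an isomorphism.

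The main obstacle will be the bookkeeping of the exponents $d_i(d)$ and the verification that the products $h^{a}c_i(d)$ vanish in exactly the claimed range rather than somewhere earlier or later — equivalently, controlling the $\bZ_{(2)}[h]$-module structure of the ``lower'' excellent summands and how their hyperplane classes interact. This is where one must genuinely use the recursive structure of excellent forms (the splitting pattern $d=2^n-1+d'$ with $d'$ again excellent) rather than just the motivic decomposition, and it is the step where a careful induction on $n$ is unavoidable. Everything else — the torsion orders $2c_i(d)=0$, the vanishing $c_i(d)c_j(d)=0$, and the polynomial generation — is formal once $\Img(res_\Omega)$ is identified, by the same arguments as Theorems 3.1 and 5.7.
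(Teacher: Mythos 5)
The paper itself gives no proof of this theorem; it is quoted verbatim from \cite{YaE}, and your outline --- the decomposition of $M(X_{\xi})$ into shifted Rost motives, the identification of $\Img(res_{\Omega})\subset \Omega^*(\bar X_{\xi})$ via Theorem 3.1 (Vishik--Yagita), the resulting relations $2c_i(d)=0$ and $c_i(d)c_j(d)=0$ from injectivity of $res_{\Omega}$, and an induction on the excellent tower to fix the $d_i(d)$ --- is exactly the strategy of that reference. The one point you rightly flag as the genuine work is the $\bZ_{(2)}[h]$-module bookkeeping: the relations $h^{d_i(d)}c_i(d)=0$ come from the fact that a suitable power of $h$ restricts to twice a generator on the relevant Rost summand (the $u_0=h^{2^n-1}=2y$ phenomenon of Theorem 5.5), so $h^{d_i(d)}c_i(d)$ is a multiple of $2c_i(d)=0$, and tracking where this happens for each lower excellent summand is precisely the recursion you describe.
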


{\bf Remark.}
From $CH^*(X)$ in the preceding three theorems, we
see $T(X)^2=Ker(res_{CH})^2=0$ as stated  in the introduction. In fact, $b_ib_j=0$ in $T(X)=(b_1,b_3,...,b_{2p-3})$ for  Theorem 5.4,  $u_iu_j=0$ in $T(X)=(u_1,...,u_{n-1})$ for  Theorem 5.5,
and $c_i(d)c_j(d)=0$  in $T(X)=(c_1(d),...,c_{n-1}(d))$ in Theorem 5.7.

\section{$m=1$ and versal flag varieties}

In this section, we consider the case $m=1$ related to
algebraic groups.  Of course $K^*_1(X)$ essentially is
isomorphic to  the algebraic 
$K$-theory $K^*(X)$ of $X$.   We simply write $K_1^*(X)$
by $K^*(X)$.  It is well known from Panin (\cite{Pa})
that $K^*(\bG/P_k)$  is torsion free for each parabolic subgroup.  
\begin{lemma} The restriction map
$res_K:K^*(\bG/P_k)\to K^*(\bar \bG/P_k)$ is injective.
\end{lemma}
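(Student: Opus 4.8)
The plan is to reduce the injectivity of $res_K : K^*(\bG/P_k) \to K^*(\overline{\bG/P_k})$ to Panin's theorem that $K^*(\bG/P_k)$ is a free abelian group of finite rank, together with the fact that, after passing to a splitting field $L$ of the torsor $\bG$, the restriction becomes an isomorphism rationally. First I would recall that by Panin \cite{Pa}, for any parabolic $P_k$, the $K$-theory ring $K^*(\bG/P_k)$ is a finitely generated free $\bZ$-module, and likewise $K^*(\overline{\bG/P_k}) = K^*(\bG_{\bar k}/P_{\bar k})$ is free of the same rank — indeed $\bG/P_k$ is an inner form and $K$-theory does not see the twisting after completion at the appropriate augmentation ideal (this is the content of Panin's computation via the $\gamma$-filtration, where the associated graded of $K^*(\bG/P_k)$ agrees with that of $K^*(\overline{\bG/P_k})$, and both are torsion-free).

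Next I would use that the kernel of $res_K$ is, by the localization/transfer formalism, detected after passing to a finite extension: choose a splitting field $L/k$ for the $G_k$-torsor $\bG$ so that $\bG_L/P_L \cong \overline{\bG/P_k}_L$ becomes split, and consider the composite $K^*(\bG/P_k) \to K^*(\bG_L/P_L) \xrightarrow{\sim} K^*(\overline{\bG/P_k})\otimes_{?}$; since the ranks match and everything is torsion-free, a transfer argument (multiplication by $[L:k]$ on $\Ker(res_K)$, combined with torsion-freeness) shows $\Ker(res_K)$ is both torsion and torsion-free, hence zero. Alternatively, and more cleanly, the associated graded map $\mathrm{gr}_\gamma K^*(\bG/P_k) \to \mathrm{gr}_\gamma K^*(\overline{\bG/P_k})$ is an isomorphism by Panin, and since the $\gamma$-filtration on the target is separated (finitely many steps, free graded pieces), an element in the kernel of $res_K$ must lie in $\bigcap_i F^i = 0$.

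The main obstacle is making precise the claim that \emph{ranks agree and the map is surjective modulo torsion}. One has to invoke the correct form of Panin's result: for a generically split variety (and a complete flag variety $\bG/B_k$ is always generically split, as noted just before Theorem 5.2; a general $\bG/P_k$ is handled by Theorem 5.1 writing it as $M(\bG/B_k)\otimes H^*(B/P)$), the restriction to the splitting field is an isomorphism after inverting nothing — the Rost-motive/Tate-motive decomposition of $M(\bG/B_k)_{(p)}$ from Theorem 5.2 forces $K^*$ (which kills the torsion in Chow groups by the Atiyah–Hirzebruch collapse for $K$-theory of Tate motives and of $R(\bG)$, cf. Lemma 3.4 and Corollary 3.6) to be free and of the correct rank. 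I would spell this out: $K^*(X) \cong \bigoplus_i K^*(R(\bG))\otimes \bT$-contributions, and by the $K_m^*$-module isomorphism $K_m^*(R_n) \cong K_m^*(\bar R_n)$ of Lemma 3.4 (case $m=1$), each summand restricts isomorphically, whence $res_K$ is injective (in fact split injective) on the whole of $K^*(X)$. The only genuinely delicate point is checking that the motivic decomposition is compatible with the $K$-theory restriction maps, which is exactly where one cites Panin \cite{Pa} rather than reproving it.
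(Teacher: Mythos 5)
Your main argument --- Panin's torsion-freeness of $K^*(\bG/P_k)$ combined with a transfer over a finite splitting field $L/k$ (so that an element of $\Ker(res_K)$ dies over $L$, is therefore killed by $[L:k]$, and hence vanishes by torsion-freeness) --- is correct and is exactly the argument the paper leaves implicit, since the lemma is stated immediately after citing Panin's result that $K^*(\bG/P_k)$ is torsion free. (Your ``alternative'' route, asserting that $res_K$ induces an isomorphism on the associated graded of the $\gamma$-filtration, is not what Panin proves and is not needed, but this does not affect your main argument.)
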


Moreover if $G$ is simply connected, then $res_K$
is isomorphic for $P_k=B_k$ from a Chevalley result.

\begin{cor} Let $G$ be simply connected. Then we have
\[ K^*(G/T)\cong K^*(\bar \bG/B_k)\cong 
K^*(\bG/B_k).\]
\end{cor}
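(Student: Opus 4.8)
The plan is to establish the two isomorphisms of the corollary separately and then compose. The second one, $K^*(\bar \bG/B_k)\cong K^*(\bG/B_k)$, is exactly the statement recalled just above the corollary: by Lemma 6.1 the restriction $res_K\colon K^*(\bG/B_k)\to K^*(\bar \bG/B_k)$ is injective (Panin), and for $G$ simply connected it is moreover surjective, hence bijective. I would spell this out by recalling Chevalley's description of the $K$-theory of a split flag variety together with Panin's computation in the twisted case: for $P_k=B_k$, $K^*(\bG/B_k)$ is a free $K^*(k)$-module whose summands are $K$-groups of Tits algebras indexed by the Weyl group $W$, and in the simply connected case these algebras contribute only the split part, so the image of $res_K$ already contains a $K^*(k)$-basis of $K^*(\bar \bG/B_k)$.

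For the first isomorphism $K^*(G/T)\cong K^*(\bar \bG/B_k)$, I would first note that $\bar k$ is algebraically closed, so $H^1(\bar k,G_{\bar k})=1$ and the torsor $\bG$ becomes trivial over $\bar k$; thus $\bar \bG/B_k\cong G_{\bar k}/B_{\bar k}$, the split flag variety over $\bar k$. Since $B_{\bar k}/T_{\bar k}$ is an affine space, $G_{\bar k}/T_{\bar k}\to G_{\bar k}/B_{\bar k}$ is an affine bundle and induces an isomorphism on $K^*$. Finally the split flag variety is cellular (Bruhat decomposition into Schubert cells indexed by $W$, all of even dimension), so $K^*$ is free of rank $|W|$ with ring structure determined by the root datum alone; in particular the algebraic-to-topological comparison map $K^*(G_{\bar k}/B_{\bar k})\to K^*_{top}(G_{\bC}/B_{\bC})$ is an isomorphism, and $G_{\bC}/B_{\bC}$ is diffeomorphic to the compact flag manifold $G/T$. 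Chaining these identifications gives $K^*(G/T)\cong K^*(\bar \bG/B_k)$, and composing with the second isomorphism proves the corollary.

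The step I expect to be the real content — and the only possible obstacle — is the bijectivity of $res_K$ for simply connected $G$ with $P_k=B_k$, i.e. the ``Chevalley result'' invoked above: one must know that for simply connected $G$ the Tits algebras entering Panin's computation do not enlarge $K^*(\bG/B_k)$ beyond the split answer. Since this is quoted as known (Chevalley, Panin, Lemma 6.1), the remaining ingredients — triviality of the torsor over $\bar k$, the affine-bundle reduction from $T$ to $B$, and the comparison isomorphism for cellular varieties — are routine and require no new input.
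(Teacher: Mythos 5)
Your proposal is correct and follows essentially the same route as the paper: the second isomorphism is Lemma 6.1 (Panin's injectivity) combined with the quoted Chevalley result that $res_K$ is surjective for simply connected $G$ and $P_k=B_k$, and the first isomorphism is the standard identification of $K^*$ of the split flag variety (trivialization of the torsor over $\bar k$, the affine bundle $G/T\to G/B$, and the cellular comparison with the compact flag manifold $G/T$), which the paper treats as implicit. The only caution is that your gloss on \emph{why} the Chevalley result holds (``the Tits algebras contribute only the split part'') is a heuristic rather than a proof, but since both you and the paper take that result as a citation, this does not affect the argument.
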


Let $gr_{\gamma}(X)$ be the associated graded ring
for the gamma filtration of $K$-theory  (see \cite{Ga-Za},\cite{Za}, \cite{YaF},
\cite{YaG} for details of gamma filtrations).
Note that gamma filtrations are defined both on
algebraic and topological $K$-theories.  From the above lemma
\begin{cor}  We  have $gr_{\gamma}(G/T)\cong gr_{\gamma}(\bG/B_k)$.
\end{cor}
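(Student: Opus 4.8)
The plan is to deduce Corollary 6.3 from the ring isomorphism $K^*(G/T)\cong K^*(\bG/B_k)$ of Corollary 6.2 by upgrading it to an isomorphism of $\lambda$-rings and then invoking the general principle that the gamma filtration is intrinsic to the special $\lambda$-ring structure. First I would recall that for an augmented special $\lambda$-ring $K$ with augmentation ideal $I_\gamma$, the term $F_\gamma^i$ of the gamma filtration is the ideal generated by all products $\gamma^{i_1}(a_1)\cdots\gamma^{i_r}(a_r)$ with $a_s\in I_\gamma$ and $i_1+\cdots+i_r\ge i$, and each operation $\gamma^j$ is a universal polynomial in the exterior-power operations $\lambda^s$. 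Consequently any isomorphism of augmented special $\lambda$-rings automatically preserves the gamma filtration and induces an isomorphism of the associated graded rings. So the whole problem reduces to checking that the two isomorphisms composing the statement of Corollary 6.2 are maps of $\lambda$-rings.

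Next I would treat the restriction map $res_K\colon K^*(\bG/B_k)\to K^*(\bar\bG/B_k)$: it is pull-back along a morphism of schemes, hence commutes with exterior powers and is a $\lambda$-ring homomorphism; being bijective (by Lemma 6.1 together with the Chevalley argument quoted just before Corollary 6.2, $G$ being simply connected), it is a $\lambda$-ring isomorphism. Then I would handle the identification of $K^*(\bar\bG/B_k)$ with the topological $K$-theory of the compact homogeneous space $G/T$: both are to be identified, compatibly with $\lambda$-operations, with $R(T)\otimes_{R(G)}\bZ$ — Chevalley's presentation on the algebraic side, the Atiyah--Hirzebruch/Hodgkin computation on the topological side — where the tensor factor $R(T)$ carries its representation-ring $\lambda$-structure. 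Composing the two yields a $\lambda$-ring isomorphism $K^*(G/T)\cong K^*(\bG/B_k)$, and the first paragraph then gives $gr_{\gamma}(G/T)\cong gr_{\gamma}(\bG/B_k)$.

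The step I expect to be the main obstacle is the second one: making precise that the comparison between the topological $K$-theory of $G/T$ and the algebraic $K$-theory of the split flag variety $\bar\bG/B_k$ is an isomorphism of $\lambda$-rings, not merely of rings. One has to verify that the two identifications with $R(T)\otimes_{R(G)}\bZ$ match up line bundles (respectively characters of $T$) and that the $\lambda$-operations are computed in the same way on both sides; this is standard, since line bundles are the generators and the $\lambda$-structure of a $\lambda$-ring is determined by its values on them, but it is the one point that genuinely requires care. Everything else — functoriality of the $\lambda$-operations under pull-back and under base field extension, and the purely formal passage from a $\lambda$-ring isomorphism to an isomorphism of the $\gamma$-graded rings — is routine.
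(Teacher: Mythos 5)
Your proposal is correct and follows essentially the same route as the paper, which simply observes that the gamma filtration is defined on both the topological and algebraic $K$-theories and is determined by the $\lambda$-ring structure, so the isomorphisms of Corollary 6.2 (restriction being a pull-back, and both sides of the split comparison being $R(T)\otimes_{R(G)}\bZ$ compatibly with line bundles) transport it. You have merely made explicit the $\lambda$-ring compatibility checks that the paper leaves implicit.
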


Let us consider an embedding of $G_k$ into the general linear group $GL_N$ for some $N$.  This makes $GL_N$ a $G_k$-torsor over the quotient variety $S=GL_N/G_k$.
Let $F$ be the function field $k(S)$ and  define
the $versal$ $G_k$-$torsor$ $E$ to be the $G_k$-torsor over $F$ given by the generic fiber of $GL_N\to S$. 
(For details, see \cite{Ga-Me-Se}, \cite{To1}, \cite{Me-Ne-Za}, \cite{Ka1}.) 
\[\begin{CD}
        E@>>> GL_N\\
       @VVV     @VVV \\
     Spec(k(S))  @>>> S=GL_N/G_k
\end{CD}\]
The corresponding flag variety $E/B_k$ is called a $versal$ flag
variety, which is considered as the most complicated twisted
flag variety.  It is known that the Chow ring
$CH^*(E/B_k)$ is not dependent to the choice
of  generic $G_k$-torsors $E$ (Remark 2.3 in \cite{Ka1}).

Karpenko and Merkurjev  showed the following result
 for a versal  flag variety. 
\begin{thm}
(Karpenko Lemma 2.1 in \cite{Ka1})
Let $h^*(X)$ be an oriented cohomology theory
(e.g., $CH^*(X)$, $\Omega^*(X)$).
Let  $\bG/B_k$ be a versal flag variety.
Then the natural map
$h^*(BB_k)\to h^*(\bG/B_k)$ is surjective.
\end{thm}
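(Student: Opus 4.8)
The plan is to reduce the statement to a known case over a convenient $G_k$-torsor, exploiting that $CH^*(E/B_k)$ is independent of the choice of generic torsor. First I would set up the geometry: embed $G_k \hookrightarrow GL_N$, form $S = GL_N/G_k$, and let $E$ be the generic fiber of $GL_N \to S$ over $F = k(S)$. The key structural input is that $GL_N \to S$ is a \emph{locally trivial} $G_k$-torsor in the Zariski topology (since $GL_N$ is a special group, every $GL_N$-torsor — and hence the $G_k$-torsor structure pulled back appropriately — splits Zariski-locally), so over a dense open $U \subset S$ the torsor is trivial, and $(GL_N/B_k)|_U \cong U \times (G_k/B_k)$. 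This gives a cellular-type description that I can compare with the classifying space $BB_k$.

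The core of the argument is a localization/limit comparison. I would use that $BB_k$ is approximated by the quotients $(GL_N \times^{B_k} \mathbb{A}^r)/(\text{linear action})$ or more directly that $CH^*(BB_k)$ is the limit of $CH^*$ of approximating varieties $(V \setminus Z)/B_k$ for representations $V$ of $B_k$; similarly $E/B_k$ arises as a fiber of $GL_N/B_k \to S$. Because $GL_N \to GL_N/B_k$ is itself a $B_k$-torsor which is a suitable approximation to $EB_k \to BB_k$, one gets a surjection $CH^*(BB_k) \to CH^*(GL_N/B_k)$ (this is essentially the statement that $GL_N/B_k$ is an approximation of $BB_k$ up to high codimension). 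Then restricting along the inclusion of the generic fiber $E/B_k \hookrightarrow GL_N/B_k$, I would invoke the localization exact sequence $CH^*(GL_N/B_k) \twoheadrightarrow CH^*(E/B_k)$ (the generic fiber is obtained by removing closed subvarieties, which only kills classes, never creates them). Composing the two surjections $h^*(BB_k) \twoheadrightarrow h^*(GL_N/B_k) \twoheadrightarrow h^*(E/B_k)$ gives the claim. For a general oriented cohomology theory $h^*$ in the sense of Levine--Morel, the same localization sequence and homotopy-invariance properties hold, so the argument goes through verbatim with $CH^*$ replaced by $h^*$ (in particular for $\Omega^*$).

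The main obstacle I expect is making precise the surjection $h^*(BB_k) \to h^*(GL_N/B_k)$: this requires care about whether $GL_N/B_k$ (equivalently $GL_N/T_k$ up to an affine bundle) is a good enough approximation of $BB_k$ in the relevant range of degrees, and one must either fix $N$ large relative to the degree of interest or pass to a colimit over $N$ and standard embeddings. The cleanest route is to observe $GL_N/B_k \cong GL_N/T_k$ modulo an affine-space bundle, and $GL_N/T_k$ fibers over $\prod \mathbb{P}^{N-1}$-type flag bundles, for which the projective bundle formula expresses $h^*(GL_N/B_k)$ as a quotient of a polynomial ring that maps onto from $h^*(BB_k) = h^*(BT_k)$; one then checks that every generator in $h^*(GL_N/B_k)$ lifts. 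A secondary point to verify is that $E/B_k$ is genuinely a fiber of $GL_N/B_k \to S$ over $\operatorname{Spec} F$ and hence obtained by a base change that factors through open immersions and closed complements, so that the localization surjection applies; this uses continuity of $h^*$ under filtered limits of schemes, which holds for $CH^*$ and $\Omega^*$.
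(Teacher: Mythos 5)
The paper gives no proof of this statement (it is quoted from Karpenko's Lemma 2.1), but your two-step skeleton --- a surjection $h^*(BB_k)\twoheadrightarrow h^*(GL_N/B_k)$ followed by the localization/continuity surjection $h^*(GL_N/B_k)\twoheadrightarrow h^*(E/B_k)$ onto the generic fiber --- is exactly the cited argument, and the second step is handled correctly. However, the assertion you single out as ``the key structural input'' is false: the $G_k$-torsor $GL_N\to S=GL_N/G_k$ is \emph{not} Zariski-locally trivial in general. Specialness of $GL_N$ says that $GL_N$-\emph{torsors} are Zariski-locally trivial; it says nothing about torsors under the subgroup $G_k$, and the map $H^1(-,G_k)\to H^1(-,GL_N)$ kills classes precisely when $G_k$ admits nontrivial torsors. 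If $GL_N\to S$ were trivial over a dense open $U\subset S$, the generic fiber $E$ would be the trivial $G_k$-torsor over $k(S)$ and $E/B_k$ would be the split flag variety, which would make the versal construction (and the theorem) vacuous. Fortunately this claim is not used in your actual chain of surjections, so it is not load-bearing --- but it must be deleted, not repaired.

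The step that does need a correct input is the first surjection, and your justification there is wobbly. It is not a matter of $GL_N/B_k$ approximating $BB_k$ ``up to high codimension'': the complement of $GL_N$ in $\bA^{N^2}$ is a divisor, so this is a poor approximation in Totaro's sense and no passage to large $N$ improves it; nor does $GL_N/B_k$ fiber over a product of projective spaces, since $B_k$ is a Borel of $G_k$, not of $GL_N$. The correct argument is: $B_k$ itself is special (split solvable), so $GL_N\to GL_N/B_k$ is a Zariski-locally trivial $B_k$-torsor; filtering $B_k$ through $T_k$ and its unipotent radical and using homotopy invariance together with the projective bundle formula, the characteristic map $h^*(BB_k)\to h^*(GL_N/B_k)$ is surjective as soon as $h^*(GL_N)$ is generated by $1$ over $h^*(pt)$ --- which holds because $GL_N$ is open in $\bA^{N^2}$ and $h^*$ satisfies localization and homotopy invariance. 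With that substitution (and keeping your continuity argument identifying $h^*(E/B_k)$ with the colimit of $h^*$ over the open preimages $f^{-1}(U)$, $U\subset S$), your proof is complete and coincides with the one the paper cites.
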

\begin{cor}
If $\bG$ is versal, then $CH^*(\bG/B_k)$
is multiplicatively generated by elements $t_i$ in $S(t)$. 
\end{cor}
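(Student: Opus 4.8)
The plan is to deduce the corollary directly from Karpenko's surjectivity theorem (Theorem 6.5) by specializing to the case $h^* = CH^*$. First I would recall from $\S5$ that $CH^*(BB_k) = S(t) \cong \bZ_{(p)}[t_1,\dots,t_\ell]$, the polynomial ring on the degree-two generators $t_i$ dual to a basis of the character lattice of $T_k$; this is the standard presentation of the Chow ring of the classifying space of a split torus (or split Borel), and it is already invoked before Theorem 5.4. The classifying map is the ring homomorphism $c\colon CH^*(BB_k)\to CH^*(\bG/B_k)$ appearing in the displayed formula in $\S5$.

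The key step is then just to apply Theorem 6.5 with $h^* = CH^*$ and $X = \bG/B_k$ the versal flag variety $E/B_k$: the theorem asserts precisely that $c$ is surjective. Since $CH^*(BB_k) = S(t)$ is generated as a ring by the $t_i$, and $c$ is a ring homomorphism, the image $CH^*(\bG/B_k)$ is generated as a ring by the elements $c(t_i)$; writing these images again as $t_i \in CH^*(\bG/B_k)$ (the abuse of notation already in force in the excerpt) gives the statement. One should note that $CH^*$ is indeed an oriented cohomology theory in the sense of Levine--Morel, so Theorem 6.5 applies; this is immediate from $\Omega^{2*}(X) = ABP^{2*,*}(X)$ and $CH^*(X) \cong \Omega^*(X)\otimes_{BP^*}\bZ_{(p)}$ recorded in $\S2$--$\S3$, which realizes $CH^*$ as a free oriented theory's specialization.

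There is essentially no obstacle here: the corollary is a formal consequence of Theorem 6.5 together with the polynomiality of $CH^*(BB_k)$. The only point requiring a word of care is the independence of $CH^*(E/B_k)$ from the chosen embedding $G_k \hookrightarrow GL_N$ and the generic torsor $E$, so that "the versal flag variety" is well defined; but this is exactly Remark 2.3 of \cite{Ka1}, already cited in the paragraph preceding Theorem 6.5, so it may be quoted rather than reproved. Hence the proof is a single line once the setup is in place.
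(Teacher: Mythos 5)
Your proof is correct and is exactly the argument the paper intends: the corollary is an immediate consequence of Theorem 6.5 applied to $h^*=CH^*$, since $CH^*(BB_k)=S(t)$ is polynomial on the $t_i$ and the image of a surjective ring homomorphism is generated by the images of the generators. The paper leaves this implicit, so no further comparison is needed.
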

It is well known \cite{At}, \cite{YaF} that
if $gr_{geo}(X)$ is multiplicatively generated by Chern classes in
$CH^*(X)$, then $gr_{\gamma}(X)\cong
gr_{geo}(X)$.
\begin{cor}  If $\bG$
be versal, then $gr_{\gamma}(\bG/B_k)\cong gr_{geo}(\bG/B_k)$.
\end{cor}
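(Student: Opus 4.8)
The plan is to combine the two facts already recorded just before the statement. By Karpenko's Lemma (Theorem~6.4 in the excerpt) applied to $h^*=CH^*$, the natural map $CH^*(BB_k)\to CH^*(\bG/B_k)$ is surjective when $\bG$ is versal; equivalently, as spelled out in Corollary~6.5, the ring $CH^*(\bG/B_k)$ is multiplicatively generated by the degree-$2$ classes $t_i\in S(t)=CH^*(BB_k)$. Each such $t_i$ is $c_1$ of the associated line bundle on $BB_k$ pulled back to $\bG/B_k$, hence a Chern class. So $CH^*(\bG/B_k)$, and therefore $gr_{geo}(\bG/B_k)$ as a quotient of it, is multiplicatively generated by Chern classes in $CH^*(\bG/B_k)$.

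The second input is the cited comparison principle (\cite{At}, \cite{YaF}): if the associated graded ring $gr_{geo}(X)$ of the geometric (topological/codimension) filtration is generated as a ring by Chern classes lying in $CH^*(X)$, then the gamma filtration and the geometric filtration coincide, i.e.\ $gr_{\gamma}(X)\cong gr_{geo}(X)$. The point behind this is standard: the gamma filtration is by definition the smallest multiplicative filtration for which all Chern classes $c_j(E)$ sit in filtration $\geq j$, so one always has $F^i_{\gamma}\subseteq F^i_{geo}$; and when the geometric graded ring is generated by images of Chern classes in the correct degrees, the reverse containment on associated graded pieces follows, forcing equality.

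Putting these together: take $X=\bG/B_k$ with $\bG$ versal. First I would invoke Corollary~6.5 to see that $CH^*(\bG/B_k)$ is multiplicatively generated by the classes $t_i$, and observe these are Chern classes (of pullbacks of the tautological line bundles on $BB_k=(BT_k)$). Next I would note that the geometric filtration graded ring $gr_{geo}(\bG/B_k)$ is a quotient of $CH^*(\bG/B_k)$ compatible with the ring structure, so it too is generated by (the images of) these Chern classes, in their correct filtration degrees. Finally I would apply the Atiyah--Yagita comparison result to conclude $gr_{\gamma}(\bG/B_k)\cong gr_{geo}(\bG/B_k)$, which is the assertion of Corollary~6.6.

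Honestly there is not much of an obstacle here: the corollary is a formal consequence of the two preceding results, and the only thing to be careful about is the bookkeeping that the generators $t_i$ really are Chern classes sitting in geometric filtration exactly $1$ (rather than merely $\leq 1$), so that the hypothesis of the comparison theorem is met verbatim — but this is immediate since $t_i\in CH^1(\bG/B_k)$ and $CH^1$ is precisely $F^1_{geo}/F^2_{geo}$. So the proof is just "apply Theorem~6.4 (via Corollary~6.5) and then apply the cited Atiyah--Yagita result."
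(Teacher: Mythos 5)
Your proposal is correct and is exactly the paper's (implicit) argument: the corollary is stated without proof as an immediate consequence of Corollary~6.5 together with the cited Atiyah--Yagita fact that generation of $gr_{geo}(X)$ by Chern classes in $CH^*(X)$ forces $gr_{\gamma}(X)\cong gr_{geo}(X)$. Your additional bookkeeping — that the $t_i$ are first Chern classes of line bundles pulled back from $BB_k$ and sit in the correct filtration degree — is the right way to fill in the details the paper leaves unstated.
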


Now we consider the case $G=G_1\times G_2$
with $G_i$ are of type $(I)$.  Let 
us write by $R^i$ for $R(\bG_i)$. (So $R^i\cong R_2$.)  
Let us write
$CH^*(\bar R^i)\cong \bZ_{(p)}[y_i]/(y_i^p)$.
Take $R=R(\bG)$  for a  versal torsor $\bG$
over $k(S)$ with $S=GL_N/(G_1\times G_2)$.  Moreover let  $R^i=R(\bG_i)|_{k(S)}$ for versal $\bG_i$, while they are defined over $k(S')$ with $S'=GL_N/(G_i)$.
Then these $R,R',R''$ satisfies $(*)$ in Lemma 4.1.

Recall $J_2=0$ in $K^*(\bar R)$.  From Corollary 6.2, we see $K^*(R)\cong K^*(\bar R)$.
Hence $J_2=0$ in $K^*(R)$ and so in $ gr_{\gamma}(R)$. 
From Corollary 4.2 and the above Karpenko result,
the following lemma is immediate
\begin{lemma}
Let $G=G_1\times G_2$ with $G_i$ of type $(I)$,
and  $\bG$ be  versal.  Then the map
$j: CH^*(R^1)\otimes CH^*(R^2)\to CH^*(R)$
is surjective, and 
we have isomorphisms
\[  gr_{\gamma}(R)\cong gr(1)^*(R)\cong (CH^*(R^1)\otimes CH^*(R^2))/(J_2),\] 
\[ K^*\otimes (\tilde gr_{\gamma}(R)) 
\cong 
gr_{p^2}(\tilde K^*(\bar R)).\]
 \end{lemma}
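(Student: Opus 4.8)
The plan is to assemble the conclusion from pieces that are already in place. The statement to prove is Lemma 7.1: for $G = G_1 \times G_2$ with each $G_i$ of type $(I)$ and $\mathbb{G}$ versal, the Kunneth map $j \colon CH^*(R^1)\otimes CH^*(R^2) \to CH^*(R)$ is surjective and we get the two displayed isomorphisms. First I would set up the map $j$ itself: since $R^i \cong R_2 = R(\mathbb{G}_i)$ is a summand of $\mathbb{G}_i/B_k$ and $R$ is a summand of $\mathbb{G}/B_k$, and the versal torsor $\mathbb{G}$ over $k(S)$ with $S = GL_N/(G_1\times G_2)$ admits maps to the individual $\mathbb{G}_i|_{k(S)}$, one obtains correspondences $R \to R^i$ and hence the Kunneth map; one also checks condition $(*)$ of Lemma 4.1 holds (this is asserted in the paragraph just before the lemma), so that over $\bar k$ the induced map $CH^*(\bar R^1)\otimes CH^*(\bar R^2) \xrightarrow{\cong} CH^*(\bar R)$ is an isomorphism.

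The heart of the surjectivity claim is the Karpenko result, Theorem 6.3: for the versal flag variety $\mathbb{G}/B_k$ the natural map $CH^*(BB_k) = S(t) \to CH^*(\mathbb{G}/B_k)$ is surjective. Here $BB_k = BB_{k,1}\times BB_{k,2}$ so $S(t) = S(t_1)\otimes S(t_2)$, and by Theorem 5.4 applied to each factor the classes $b_{2i-1}, b_{2i}$ map to $c_1(y_\ell^i), c_0(y_\ell^i)$ in $CH^*(\mathbb{G}_\ell/B_k)$, hence (after projecting to the Rost summand) generate $CH^*(R^\ell)$ over $\mathbb{Z}_{(p)}$. Since $CH^*(\mathbb{G}/B_k)$ is generated by the $t_i$'s coming from the two factors, and these already lie in the image of $CH^*(R^1)\otimes CH^*(R^2) \otimes (\text{Tate part})$, restricting to the summand $R$ of $\mathbb{G}/B_k$ shows $j$ is surjective. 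This is where I would be most careful: one needs that the generation statement survives passage to the motivic summand $R$, i.e., that the projector cutting out $R$ from $\mathbb{G}/B_k$ is compatible with the products of $b$-classes — this is essentially the content of how $R$ sits inside the Petrov–Semenov–Zainoulline decomposition, and is the main obstacle to writing cleanly.

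Once surjectivity of $j$ is established, the two isomorphisms follow formally. For the first chain: $gr_\gamma(R) \cong gr(1)^*(R)$ is Corollary 6.4 together with the fact (Corollary 6.2 / Lemma 2.1) that for the type $(I)$ situation $gr(1)^* = gr_{geo}$, and Corollary 4.3 (surjectivity of $j$ on $C(R',R'')/J_2 \to \tilde{gr}(1)^*(R)/J_2$ upgrades to an isomorphism) gives $gr(1)^*(R) \cong (CH^*(R^1)\otimes CH^*(R^2))/J_2$; note $J_2 = 0$ in $K^*(\bar R)$, and since $K^*(R)\cong K^*(\bar R)$ by Corollary 6.2, $J_2 = 0$ in $K^*(R)$ and hence dies in $gr_\gamma(R)$, so the quotient by $J_2$ is the honest relation. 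For the second isomorphism $K^* \otimes \tilde{gr}_\gamma(R) \cong gr_{p^2}(\tilde K^*(\bar R))$, I would invoke the isomorphism part of Lemma 4.1, namely $K_m^* \otimes C(R',R'')/J_2 \cong gr_{p^2}(\tilde K_m^*(\bar R))$, specialized to $m=1$, combined with the first displayed isomorphism. The only real verification is bookkeeping: matching $C(R^1,R^2)/J_2$ with $\tilde{gr}_\gamma(R)$ modulo the images of $gr_\gamma(R^1)$ and $gr_\gamma(R^2)$, which is exactly what surjectivity of $j$ plus Corollary 4.3 delivers.
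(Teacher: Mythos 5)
Your proposal is correct and follows essentially the same route as the paper, which declares the lemma ``immediate'' from the Karpenko surjectivity theorem (Theorem 6.4), the identification $K^*(R)\cong K^*(\bar R)$ killing $J_2$ (Corollary 6.2), and the surjective-implies-isomorphic statement of Corollary 4.2; you have merely filled in the bookkeeping the paper omits (your citations ``Corollary 6.4'' and ``Corollary 4.3'' should read Corollary 6.6 and Corollary 4.2 in the paper's numbering).
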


{\bf Remark.} In particular, let $p=2$.  Then
\[ CH^*(R)\cong\ \  CH^*(R_1)\otimes CH^*(R_2)\ \ 
 or\ \ 
CH^*(R_1)\otimes CH^*(R_2)/J_2.\]
Karpenko conjectures $CH^*(X)\cong gr_{\gamma}(X)$
in such cases.
So if it is correct, then we have the second case.

Next consider the case  $G=\times _{s=1}^nG_s$ with $G_s$ is of type $(I)$,  with $CH^*(\bar R_s)\cong \bZ_{(p)}[y_s]/(y_s^p)$.
Let $J_n\subset \otimes_{s=1}^nCH^*(R^s)$  be defined by \[J_n=\bZ/p\{c_1(y_r^i)c_0(y_t^j)- c_0(y_r^i)c_1(y_t^j)|1\le i,j\le p-1,\ 1\le r<t\le n \}.\]
Let us write $gr_{p^n}(A)=A^0\oplus A^+/p\oplus pA^+/p^2A^+\oplus...
\oplus p^nA^+$.

\begin{thm}
Let $G=\times _{i=1}^sG_i$ with $G_s$ of type $(I)$, and $\bG$ be versal.  Let $R,R^i$ are the Rost motives for $G,G_i$
respectively. 
Then we have isomorphisms
\[ gr_{\gamma}(R)\cong (\otimes_{i=1}^sCH^*(R^i))/(J_s),\]
\[  \tilde gr_{\gamma}(R)\cong K^*\otimes(\otimes_{i=1}^sCH^+(R^i)/(J_s))
\cong gr_{p^s}(\tilde K^*(\bar R)).\]
Here  $\tilde gr_{\gamma}=gr_{\gamma}(R)/\sum_{i=1}^s
gr_{\gamma}(R^{-i})$ and  $\tilde K^*(R)=K^*(R)/(\sum_{i=1}^s 
K^*(R^{-i}))$  where $R^{-i}$ is the (Rost) motive for
the group $G_{-i}=G_1\times ...\times\hat G_i\times...\times G_s$.
\end{thm}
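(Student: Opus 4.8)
The plan is to reduce Theorem 6.6 (the multi-factor case $G=\times_{i=1}^s G_i$) to the two-factor Lemma 6.5 by an induction on $s$, using the fact that $G_1\times\cdots\times G_s = (G_1\times\cdots\times G_{s-1})\times G_s$ and that the product of type $(I)$ groups again has a versal torsor whose Rost motive is a summand of its complete flag variety. Concretely, write $G'=G_{-s}=G_1\times\cdots\times G_{s-1}$ and $G''=G_s$, and let $R'=R(\bG')$, $R''=R(\bG'')\cong R_2$ be the associated Rost motives over $k(S)$ (here $\bG$ versal over $k(S)$ with $S=GL_N/G$ restricts to versal torsors for each factor, and all the relevant Rost motives are defined and are summands of the corresponding versal complete flag varieties by Theorem 6.3 and Theorem 5.3). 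By the inductive hypothesis applied to $G'$, we have $gr_\gamma(R')\cong(\otimes_{i=1}^{s-1}CH^*(R^i))/(J_{s-1})$, and this motive satisfies condition $(*)$ of Lemma 4.1 together with $R''$ and the ambient $R$.

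The second step is to apply the machinery of Lemma 4.1 and Corollary 4.2 to the pair $(R',R'')$ inside $R$. Since $\bG$ is versal, Theorem 6.3 (Karpenko) gives surjectivity of $h^*(BB_k)\to h^*(\bG/B_k)$, hence $CH^*(R)$ is multiplicatively generated by the $t_i$; combined with Corollary 4.2 this makes $j:CH^*(R')\otimes CH^*(R'')\to CH^*(R)$ surjective, so Corollary 4.2 upgrades the injection of Lemma 4.1 to an isomorphism $C(R',R'')/J_2\cong\tilde gr(1)^*(R)/J_2$ once we know $K^*(R)\cong K^*(\bar R)$. That last point follows from Corollary 6.2 (Panin/Chevalley): $K^*$ of a versal complete flag variety is the split one, so $J_2=0$ in $K^*(R)$ and hence in $gr_\gamma(R)$, and also $gr_\gamma(R)\cong gr(1)^*(R)=gr_{geo}(R)$ by Corollary 6.4. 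Feeding in the inductive description of $gr_\gamma(R')$ and tracking how the "new" relations $c_1(y_r^i)c_0(y_s^j)-c_0(y_r^i)c_1(y_s^j)$ for $r<s$ get added, one sees that $J_{s-1}$ together with the $J_2$ for the pair $(R',R'')$ assembles exactly into $J_s$; this gives the first displayed isomorphism $gr_\gamma(R)\cong(\otimes_{i=1}^s CH^*(R^i))/(J_s)$. The second chain of isomorphisms, involving $\tilde gr_\gamma$ and $gr_{p^s}(\tilde K^*(\bar R))$, comes from iterating the $gr_{p^2}$ computation in the proof of Lemma 4.1: each factor of $C_0,C_1$-type contributes a power of $v_m=v_1$ (invertible in $K^*$), so the $s$-fold tensor product of $CH^+(R^i)$ modulo $J_s$, after tensoring with $K^*$, reads off precisely the successive quotients $p^{j}\tilde K^*(\bar R)/p^{j+1}$, i.e. $gr_{p^s}(\tilde K^*(\bar R))$.

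The main obstacle I expect is bookkeeping around the quotient by $\sum_i gr_\gamma(R^{-i})$ and making sure the inductive step's $J_2$-relation really is "compatible" with, rather than independent of, the relations already present in $J_{s-1}$ — in particular checking that no extra relations beyond $J_s$ appear in $gr_\gamma(R)$ (equivalently, that $j$ has kernel exactly $J_s$, not something larger). The clean way to handle this is to verify the hypothesis $(**)$ of Lemma 4.1 directly for the pair $(R',R'')$: one must show that in $k_1^*(\bar R)$ the elements $py_1^i y_2^j$ and $v_1 y_1^i y_2^j$ (with $y_1$ now a multi-index monomial coming from $\bar R'$ and $y_2$ from $\bar R''$) are not in the image of the restriction from the connective $k_1^*$ of the twisted motive. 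By the inductive structure and the surjectivity of $res_K$ onto $K^*(\bar R)$, this reduces to the corresponding non-integrality statement one factor at a time; since the versal torsor is the "most complicated" one and $CH^*(R^i)$ has the torsion $c_1(y_i^j)$ surviving, $(**)$ holds, and then Corollary 4.2 forces the isomorphism. I would also remark, as in the $s=2$ Remark after Lemma 6.5, that for $p=2$ one cannot a priori distinguish $CH^*(R)$ from $gr_\gamma(R)$, so the theorem is stated for $gr_\gamma$ and the consequence $T(X)^s\neq0$ (Corollary 1.3) is then extracted from the nonvanishing of the $J_s$-quotient.
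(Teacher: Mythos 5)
The paper gives no proof of this theorem at all: it is stated immediately after the two--factor Lemma (the $G_1\times G_2$ case, whose proof is itself only the one line ``from Corollary 4.2 and the Karpenko result the lemma is immediate''), with the evident intention that the general case follows by iteration. Your strategy --- Karpenko surjectivity of $S(t)\to CH^*(\bG/B_k)$ to get surjectivity of $j$, Corollary 6.2 to kill $J_2$ in $K^*(R)$, Corollary 4.2 to upgrade to an isomorphism, and then induction on $s$ --- is exactly the intended route, so in that sense you are reconstructing the argument the author omitted.

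There is, however, a genuine gap in the inductive step as you set it up. Lemma 4.1 and Corollary 4.2 are formulated only for a pair of \emph{original} Rost motives $R'=R_{n_1}$, $R''=R_{n_2}$: the decomposition $C(R',R'')=C_0\oplus C_1\oplus C_2$ and the definition of $J_2$ rest on $CH^+(R')$ being spanned by classes $c_0(y_1^i)$, $c_m(y_1^i)$ for a \emph{single} generator $y_1$. In your induction $R'$ is the generalized Rost motive of $G_1\times\cdots\times G_{s-1}$, whose positive-degree classes are monomials $c_{a_1}(y_1^{i_1})\cdots c_{a_{s-1}}(y_{s-1}^{i_{s-1}})$; such a class is neither ``of type $c_0$'' nor ``of type $c_m$'', so the trichotomy $C_0,C_1,C_2$ and the pairwise $J_2$ do not literally apply to the pair $(R',R'')$. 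What is actually needed is the $(s+1)$-fold grading by the number of $c_1$-factors (this is precisely what produces $gr_{p^s}$ rather than $gr_{p^2}$), together with a check that the pairwise relations $c_1(y_r^i)c_0(y_t^j)-c_0(y_r^i)c_1(y_t^j)$ for $r<t$ generate \emph{all} relations among these monomials in $gr_\gamma(R)$ --- i.e.\ that after restriction to $K^*(\bar R)$ two monomials agree iff they have the same exponents and the same number of $c_1$'s, and that nothing further is killed. This is true, but it requires either restating Lemma 4.1/Corollary 4.2 in an $s$-fold form (arguing directly with the map $\otimes_{i=1}^s k^*(R^i)\to k^*(\bar R)$ and the filtration by powers of $p$), or explicitly generalizing the two-factor lemma to allow $R'$ a product-type generalized Rost motive. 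You identify the right issue (``checking that $j$ has kernel exactly $J_s$'') but the proposal asserts rather than proves the assembly of $J_{s-1}$ and the new relations into $J_s$; that verification is the actual content of the theorem beyond the $s=2$ case.
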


\begin{cor}  Let $G=\times_{i=1}^sG_i$ and $G_i$ be of type $(I)$.  Then we have the isomorphisms
\[ gr_{\gamma}(G/T)\cong gr_{geo}(\bG/B_k)
\cong (\otimes _{i=1}^s gr_{\gamma}(G_i/T_i))/(J_s).\]
\end{cor}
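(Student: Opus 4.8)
The plan is to reduce the statement to Theorem 6.8 by splitting off, on each side, the ``Tate part'' of the flag variety and absorbing it into a free graded coefficient module. Two remarks set the stage. First, although $K^*(G/T)\cong\otimes_i K^*(G_i/T_i)$ by the K\"unneth formula, the $\gamma$-filtration of a product is in general strictly finer than the tensor product of the $\gamma$-filtrations, and the ideal $J_s$ measures precisely this discrepancy; so the assertion has genuine content and cannot be obtained by naive multiplicativity. Second, since each $G_i$ of type $(I)$ is simply connected, so is $G=\times_{i=1}^s G_i$; hence Corollary 6.3 gives $gr_{\gamma}(G/T)\cong gr_{\gamma}(\bG/B_k)$ for the versal torsor $\bG$, and Corollary 6.6 gives $gr_{\gamma}(\bG/B_k)\cong gr_{geo}(\bG/B_k)$, since $\bG$ is versal. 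This proves the first isomorphism.

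For the second isomorphism I would unwind both sides through the motivic decomposition of Theorem 5.2. Write $M(\bG/B_k)_{(p)}\cong R\otimes N$ and $M(\bG_i/B_{k,i})_{(p)}\cong R^i\otimes N_i$, where $R=R(\bG)$ is the generalized Rost motive, $R^i=R(\bG_i)\cong R_2$, and $N, N_i$ are the corresponding sums of Tate motives carrying their natural ring structures; let $\bar N,\bar N_i$ be the underlying free graded $\bZ_{(p)}$-modules. Since $gr_{geo}=gr(1)^*$ is additive on motives and merely shifts degree under Tate twists, $gr_{geo}(\bG/B_k)\cong gr(1)^*(R)\otimes_{\bZ_{(p)}}\bar N$; and as in the proof of Theorem 6.8 (cf. Lemma 6.7, and the Chern-generation criterion preceding Corollary 6.6) one has $gr(1)^*(R)\cong gr_{\gamma}(R)$, so the middle term equals $gr_{\gamma}(R)\otimes\bar N$. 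On the other side, Corollary 6.3, Corollary 6.6 and Lemma 5.3 give, for $\bG_i$ the versal torsor,
\[gr_{\gamma}(G_i/T_i)\cong gr_{geo}(\bG_i/B_{k,i})=gr(1)^*(\bG_i/B_{k,i})\cong CH^*(\bG_i/B_{k,i})\cong CH^*(R^i)\otimes\bar N_i.\]
Tensoring over $i$, reassociating, and using that the ideal generated by $J_s$ lies inside $\otimes_i CH^*(R^i)$---so that passage to the quotient commutes with tensoring by the $\bZ_{(p)}$-free module $\otimes_i\bar N_i$---one obtains
\[(\otimes_i gr_{\gamma}(G_i/T_i))/(J_s)\cong((\otimes_i CH^*(R^i))/(J_s))\otimes(\otimes_i\bar N_i)\cong gr_{\gamma}(R)\otimes(\otimes_i\bar N_i),\]
the last step being Theorem 6.8. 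Finally, over $\bar k$ the variety $\bG/B_k$ becomes $\prod_i(\bG_i/B_{k,i})|_{\bar k}$, which, together with condition $(*)$ (that is, $CH^*(\bar R)\cong\otimes_i CH^*(\bar R^i)$), forces $\bar N\cong\otimes_i\bar N_i$ by a comparison of graded ranks. Matching the two computations gives the second isomorphism, and one checks that the identifications are of graded rings by using the multiplicativity of the decomposition of Theorem 5.2.

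The step I expect to be the real obstacle is the identification $gr(1)^*(R)\cong gr_{\gamma}(R)$ on the generalized Rost summand. One cannot simply restrict the isomorphism of Corollary 6.6 to the summand $R$, because the $\gamma$-filtration need not be stable under the correspondence realizing the motivic projector that cuts out $R$ (whereas the geometric, i.e.\ coniveau, filtration obviously is). Instead one must verify for $R$ itself the hypothesis of the criterion preceding Corollary 6.6, namely that $gr_{geo}(R)$ is multiplicatively generated by images of Chern classes: by Theorem 5.4 the generators $c_0(y^j)=c(b_{2j})$ and $c_1(y^j)=c(b_{2j-1})$ of $CH^*(R)$ come from elements $b_j\in S(t)=CH^*(BB_k)$ that are themselves polynomials in the Chern classes $t_i$, which is what makes the criterion applicable. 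The remaining points---that $\bar N$ agrees with $\otimes_i\bar N_i$ as a graded module, and that the $J_s$-quotient commutes with tensoring by these free modules---are routine, but worth carrying out carefully enough to confirm that the displayed isomorphisms respect the graded ring structures.
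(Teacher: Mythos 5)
Your proposal is correct and follows essentially the route the paper intends: the paper states this corollary without proof, treating it as immediate from Theorem 6.8 combined with Corollaries 6.3 and 6.6, Lemma 5.3, and the motivic decomposition of Theorem 5.2, which is exactly the chain of reductions you carry out. Your additional bookkeeping of the Tate summands $\bar N\cong\otimes_i\bar N_i$ and your caution about $gr(1)^*(R)\cong gr_{\gamma}(R)$ on the Rost summand are legitimate refinements of details the paper leaves implicit, not deviations from its argument.
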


\section{Pfister quadrics}

In this section, we consider the case 
  $(G,p)=(SO(2^{n+1}+1),2)$ and $P=SO(2^{n+1}-1)\times SO(2)$.  Then the quadric $X_{q'}$ defined by the  
maximal neighbor $q'$ of the 
$(n+1)$ Pfister form is written as
$ X_{q'}\cong G/P$.
Throughout this section, we assume $1\le m\le n-1$.
Recall Theorem 5.5 and Lemma 5.6.
\begin{lemma}  We have, for $u_0=h^{2^n-1}$, 
\[gr(m)^*(X_{q'})\cong
CH^*(X_{q'})/I(m)\cong 
\bZ_{(2)}[h,u_m]/(u_0^2, u_0u_m,u_m^2,2u_m^2).\]
\end{lemma}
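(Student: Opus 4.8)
The plan is to read this off from Theorem 5.5, since in the range $1\le m\le n-1$ Lemma 5.6 already identifies $gr(m)^*(X_{q'})$ with $CH^*(X_{q'})/I(m)$, where $I(m)=Ideal(u_1,\dots,\hat u_m,\dots,u_{n-1})$. So the only content is to compute this quotient ring.

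First I would recall, from the Remark after Theorem 5.5, the $\bZ_{(2)}[h]$-module decomposition
\[ CH^*(X_{q'})\cong \bZ_{(2)}[h]/(h^{2^{n+1}-2})\ \oplus\ \bigoplus_{i=1}^{n-1}\bZ/2[h]/(h^{2^n-1})\{u_i\}, \]
with multiplication determined by $u_iu_j=0$ (for all $i,j$) and by $u_0=h^{2^n-1}$. The ideal $I(m)$ is generated by the $u_i$ with $i\neq 0,m$; since multiplying such a $u_i$ by $h$ keeps it inside its own $\bZ/2[h]$-summand and multiplying it by any $u_j$ gives $0$, the ideal $I(m)$ is exactly the direct summand $\bigoplus_{i\neq m}\bZ/2[h]/(h^{2^n-1})\{u_i\}$: nothing in it touches the free tower $\bZ_{(2)}[h]/(h^{2^{n+1}-2})$ or the $u_m$-line. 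Hence
\[ CH^*(X_{q'})/I(m)\cong \bZ_{(2)}[h]/(h^{2^{n+1}-2})\ \oplus\ \bZ/2[h]/(h^{2^n-1})\{u_m\}, \]
still with $u_m^2=0$.

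To finish I would recognize this as the displayed presentation: the ring homomorphism $\bZ_{(2)}[h,u_m]\to CH^*(X_{q'})/I(m)$ sending $h\mapsto h$, $u_m\mapsto u_m$ is surjective and kills $u_0^2=h^{2^{n+1}-2}$, $u_0u_m=h^{2^n-1}u_m$, $u_m^2$, and $2u_m$ (the last because $u_m$ is $2$-torsion in $CH^*(X_{q'})$), so it factors through $\bZ_{(2)}[h,u_m]/(u_0^2,u_0u_m,u_m^2,2u_m)$; and that quotient, read as a $\bZ_{(2)}[h]$-module, is again $\bZ_{(2)}[h]/(h^{2^{n+1}-2})\oplus\bZ/2[h]/(h^{2^n-1})\{u_m\}$ with the monomials $h^a$ and $h^au_m$ as basis (using $|h|=1$ and $|u_m|=2^n-2^m$ in the Chow grading), so the induced map matches bases and is an isomorphism. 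The computation is elementary; the one place that deserves a word of care is the claim that $I(m)$ is a direct summand, which is where the relations $u_iu_j=0$ (including $u_m^2=0$) and the fact that $u_0$ is an $h$-power rather than an independent generator get used. Alternatively one may work directly from the presented form $CH^*(X_{q'})\cong\bZ_{(2)}[h,u_1,\dots,u_{n-1}]/(u_iu_j,2u_k)$ of Theorem 5.5 and simply impose $u_i=0$ for $i\neq 0,m$.
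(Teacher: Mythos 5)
Your proof is correct and is exactly the argument the paper intends: the paper gives no written proof of this lemma, treating it as immediate from Theorem 5.5 and Lemma 5.6, and your verification (passing through the $\bZ_{(2)}[h]$-module decomposition in the Remark, or equivalently setting $u_i=0$ for $i\ne 0,m$ in the presentation of Theorem 5.5) is the intended computation. Note only that the relation $2u_m^2$ in the printed statement is evidently a typo for $2u_m$, as you implicitly assume in your verification; with the literal relation $2u_m^2$ the quotient would have $u_m$ torsion-free, contradicting the relation $2u_k=0$ ($k\ne 0$) of Theorem 5.5.
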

{\bf Remark.}
If $u_1$ is represented by a Chern class, then we have
$gr_{\gamma}(X_{q'}(\bC))\cong gr(1)^*(X_{q'})$ as above.

Next we consider the product of Pfister forms.
\begin{lemma}
Let $\ell=2^n-1$.  Let $G=G_1\times G_2$ with $G_i\cong SO(2\ell +1)$. 
$\bG$ be  versal and $R=R(\bG)$.  Then the following map
\[j: (gr(m)^*(R^1)\otimes gr(m)^*(R^2))/J_2\to gr(m)^*(R)/J_2\]
is injective  for $J_2=(c_0(y_1)c_m(y_2)-c_m(y_1)c_0(y_2))$.
 \end{lemma}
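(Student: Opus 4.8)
The plan is to verify condition $(**)$ of Lemma 4.1 for the versal product $R=R(\bG)$ with $\bG$ a versal $(G_1\times G_2)$-torsor, $G_i\cong SO(2\ell+1)$, $\ell=2^n-1$; once $(**)$ holds the injectivity of $j$ follows immediately from Lemma 4.1 (here $R'=R''=R_n$ since $R(\bG_i)\cong R_n$ for the maximal neighbor of the $(n+1)$-Pfister quadric, by the discussion in $\S 5$). So the real content is: show that in the connective Morava $K$-theory $k_m^*(\bar R)$ the elements $p\,y_1^iy_2^j$ and $v_m\,y_1^iy_2^j$ do not lie in $\Img(res_{k_m})$ for $1\le i,j\le p-1$; since $p=2$, $i=j=1$, this is the single statement $2y_1y_2\notin\Img(res_{k_m})$ and $v_my_1y_2\notin\Img(res_{k_m})$.

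First I would reduce everything to the complete flag (equivalently Borel) variety. By Theorem 5.1 (Petrov--Semenov--Zainoulline) the motive $M(\bG/B_k)$ splits off $R$ as a Tate-twisted summand, so $k_m^*(R)$ is a summand of $k_m^*(\bG/B_k)$, and the image of $res_{k_m}$ for $R$ is a direct summand of the image for $\bG/B_k$; hence it suffices to understand $\Img(res_{k_m}:k_m^*(\bG/B_k)\to k_m^*(\bar\bG/B_k))$ in the relevant degrees, and to identify the components $y_1y_2$, $2y_1y_2$, $v_my_1y_2$ inside it. Because $\bG$ is versal, Karpenko's Theorem 6.3 applies to $h^*=\Omega^*$ (hence to $k_m^*=\Omega^*\otimes_{BP^*}k_m^*$): the natural map $k_m^*(BB_k)\to k_m^*(\bG/B_k)$ is surjective, so $\Img(res_{k_m})$ is exactly the image of the composite $k_m^*(BB_k)\to k_m^*(\bar\bG/B_k)$, i.e. the subring generated by the characteristic classes $t_i$. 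For $G=G_1\times G_2$ this image is the tensor product (over $k_m^*$) of the two single-factor images inside $k_m^*(\overline{\bG_1/B_1})\otimes_{k_m^*}k_m^*(\overline{\bG_2/B_2})$, so the question factorizes: the obstruction is that $2y_1y_2$ and $v_my_1y_2$ would have to be \emph{products} of characteristic classes coming from the two factors separately, while $y_s$ itself is \emph{not} in the single-factor image.

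The heart of the matter is therefore the single-factor computation: in $k_m^*(\overline{X_{q'}})=k_m^*(\bar R_n)\{Tate\}$, what is $\Img(res_{k_m})$ in the degree of $y$ and neighbouring degrees? By Theorem 5.5 and Lemma 5.6, $CH^*(X_{q'})$ has $y$-coefficient generated by $u_0=2y$ and $u_m=v_my$ (all $u_i$ with $i\ne 0$ being $2$-torsion), i.e. $y\notin\Img(res_{CH})$ but $2y,v_my$ are; lifting through the Atiyah--Hirzebruch spectral sequence / using $k_m^*(X_{q'})\cong\Omega^*(X_{q'})\otimes_{BP^*}k_m^*$ from the $\Omega^*$-description, the same holds in $k_m^*$: the degree-$|y|$ part of $\Img(res_{k_m})$ for one factor is $k_m^*\{2y,\ v_my\}\subset k_m^*\{y\}=k_m^*\cdot y$, a proper $k_m^*$-submodule since $2$ and $v_m$ generate a proper ideal of $k_m^*=\bZ_{(2)}[v_m]$. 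Then, in the tensor product, the degree-$(|y_1|+|y_2|)$ part of $\Img(res_{k_m})$ is spanned over $k_m^*$ by $\{2y_1,v_my_1\}\otimes\{2y_2,v_my_2\}$, i.e. by $4y_1y_2$, $2v_my_1y_2$, $v_m^2y_1y_2$ — precisely $(2,v_m)^2\cdot y_1y_2$. Neither $2y_1y_2$ nor $v_my_1y_2$ lies in $(2,v_m)^2 k_m^*\cdot y_1y_2$, since $2,v_m\notin(2,v_m)^2$ in $\bZ_{(2)}[v_m]$. This is exactly $(**)$, and the lemma follows.

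The main obstacle I expect is the second paragraph's claim that $\Img(res_{k_m})$ for the product \emph{equals} the external tensor product of the two single-factor images — i.e. that Karpenko's surjectivity, combined with the Künneth isomorphism over $\bar k$ ($k_m^*(\overline{\bG/B_k})\cong k_m^*(\overline{\bG_1/B_1})\otimes_{k_m^*}k_m^*(\overline{\bG_2/B_2})$, from $\bar\bG/B_k$ being cellular) and the multiplicativity of characteristic classes under $BB_k=B(B_1)\times B(B_2)$, really forces the image to be generated by decomposable classes $t'\otimes t''$ with no "new" indecomposables in the critical degree $|y_1|+|y_2|$. One must check that nothing of degree $|y_1|+|y_2|$ in $\Img(res_{k_m})$ arises from characteristic classes of $BB_k$ that are \emph{not} products of a positive-degree class from $B(B_1)$ and a positive-degree class from $B(B_2)$; such a class would have to be purely from one factor, landing in $k_m^*\{y_1\}\otimes 1$ or $1\otimes k_m^*\{y_2\}$, but those sit in degree $|y_1|$ or $|y_2|$ tensored with $1$, not in bidegree $(|y_1|,|y_2|)$, so they contribute nothing to the $y_1y_2$-coefficient. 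Making this bookkeeping precise — in practice, tracking the bigraded pieces of $S(t)=S(t^{(1)})\otimes S(t^{(2)})$ modulo the regular sequences $(b_i)$ of Theorem 5.4/5.5 and reading off the $(y_1,y_2)$-graded component — is the one genuinely technical step; everything else is formal.
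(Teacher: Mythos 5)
Your overall strategy --- reduce the lemma to verifying condition $(**)$ of Lemma 4.1 for the classes $y_1y_2$ --- is exactly the paper's, but your method of verifying $(**)$ is genuinely different. The paper argues: (a) the torsion index is multiplicative, $t(G)=t(G_1)t(G_2)=2^{2\ell}$, and since $2y_{2j},2y'_{2j'}$ already lie in $\Img(res_{CH})$ for all $j,j'$, an extra relation $2yy'\in\Img(res_{CH})$ would force $t(G)\le 2^{2\ell-1}$, a contradiction; (b) the case $v_myy'$ is then reduced to case (a) by applying the Quillen operation $r_{\Delta_m}$ (with $r_{\Delta_m}(v_m)=2$), which commutes with $res_\Omega$. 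You instead bound the image from above: Karpenko surjectivity forces $\Img(res_{k_m})$ for the product to be the subring generated by the two single-factor images, and since each single-factor image has $y_i$-coefficient in $(2,v_m)$ (Theorems 3.1/5.5), the $y_1y_2$-coefficient lands in $(2,v_m)^2=(4,2v_m,v_m^2)$, which contains neither $2$ nor $v_m$. Your route is arguably cleaner in that it handles $2y_1y_2$ and $v_my_1y_2$ uniformly and dispenses with the Quillen operation; the paper's route buys robustness, as explained next.

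The one step you should justify more carefully is the single-factor upper bound \emph{over the field $k(S)$}. Theorems 3.1 and 5.5 give $y_i\notin\Img(res)$ for the anisotropic Rost motive / Pfister neighbor over its field of definition, but your argument needs this after base change to $k(S)=k(GL_N/(G_1\times G_2))$, i.e.\ that neither $R^1$ nor $R^2$ splits (even partially, in the sense that $y_i$ becomes rational) over $k(S)$. You take this for granted. The paper's torsion-index argument sidesteps exactly this issue: it uses only \emph{lower} bounds on the image (elements known to be rational) together with the exact value $t(\bG)=2^{2\ell}$ of the torsion index of the versal torsor, which is a global invariant of $G_1\times G_2$ and does not require analyzing each factor over $k(S)$ separately. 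Also note a smaller imprecision: $R^i=R(\bG_i)$ for the \emph{complete} flag variety is not the original Rost motive $R_n$ (over $\bar k$ it is $\bZ/2[y_2,\dots,y_{2\ell}]/(y_{2j}^2)$ mod $2$, not $\bZ[y]/(y^2)$); one must first pass to $\bG_i/P_i$ via Theorem 5.1 and identify $y_i=y_{2\ell}$, as the paper does, before quoting Theorem 3.1. With these two points patched, your argument goes through.
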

\begin{proof}
Consider the diagram for $X'=\bG/P_k$, $X_i'=\bG_i/P_k$,
$X=\bG/B_k$ and $X_i=\bG_i/B_k$ for $i=1,2$.
\[\begin{CD}
 CH^*(X_1')\otimes CH^*(X_2')@>>>CH^*(X')@>{res'}>> CH^*(\bar X')@>>>\bZ_{(2)}[y,y']/(y^2,(y')^2)\\
 @VVV @VVV  @VVV @V{=}VV  \\
 CH^*(X_1)\otimes CH^*(X_2)@>>>
CH^*(X)@>{res'}>> CH^*(\bar X)@>>>\bZ_{(2)}[y,y']/(y^2,(y')^2)
\end{CD}.\]
Here  $y=y_{2\ell}$ and $y'=y_{2\ell}'$ in 
$CH^*(R)$ identifying
\[ CH^*(\bar R^1)/2 \cong \bZ/2[y_2,...,y_{2\ell})]/(y_2^2,...,y_{2\ell}^2)\quad |y_{2i}|=2i.\]

Recall that the torsion index $t(G_i)=2^{\ell}$.
(Note that the fundamental class  is written as $f=y_{top}t_{top}$
for $t_{top}\in S(t)$ and  $y_{top}=y_{2}y_{4}...y_{2\ell}$, 
 for details see \cite{To1}, Theorem 7.3 in \cite{YaC}).
Hence $t(G)=t(G_1)t(G_2)=2^{2\ell}$.  If 
$2y(y')\in Im(res_{CH})$,  then
$ t(G)\le 2^{2\ell-1}$ 
because  $2y_{2j}$, $2y_{2j'}'$ are in $Im(res_{CH})$
for all $j,j'$.
This is a contradiction and we see $2y(y')\not \in Im(res_{CH})$.

  Recall the  Quillen operation $r_{\Delta_m}$
so that $r_{\Delta_m}(v_m)=2$ in $\Omega^*(R)$
(see \cite{Ha}, \cite{YaA}, \cite{YaC} for the properties of $r_{\Delta_m}$). If $v_myy'\in Im(res_{\Omega})$, then
the fact $r_{\Delta_m}(v_myy')=2yy'$ imlplies $2yy'\in Im(res_{\Omega})$, and this is a contradiction.
Hence 
we also  see $v_myy'\not \in Im(res_{\Omega})$.
Therefore the map $j$ is injective from  Lemma 4.1.
\end{proof}
{\bf Remark.} If $K_m^*(R)$ is torsion free
(e.g., for $m=1$), then $J_2=0$ in $gr(m)^*(R)$.

\begin{cor}  Let $G=\times_{i=1}^sG_i$ with 
 $G_i=SO(2^{n+1}-1)$ and $\bG$ be versal.  Then we have the injection
\[  (\otimes _{i=1}^s gr(m)^*(\bG_i/P_i))/(J_s)
\subset  gr(m)^*(\bG/P_k)/J_s.\]
\end{cor}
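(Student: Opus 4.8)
The plan is to reduce the multi-factor statement to an iterated application of Lemma 7.2, exactly as Theorem 6.7 builds on Lemma 6.6 in the type $(I)$ setting. First I would set up the notation: write $X=\bG/P_k$, and for each subset $A\subset\{1,\dots,s\}$ let $G_A=\prod_{i\in A}G_i$, let $\bG_A$ be the corresponding versal torsor (obtained from the generic fibre over $GL_N/G_A$, so that the restriction $\bG_A\to\bG$ over the appropriate extension is again versal in each factor by Remark 2.3 of \cite{Ka1}), and let $R^A=R(\bG_A)$ be its Rost motive; by the Pfister case each $R^{\{i\}}$ is the original Rost motive $R_n$. Over $\bar k$ we have $CH^*(\bar R^A)\cong\bigotimes_{i\in A}\bZ_{(2)}[y^{(i)}]/((y^{(i)})^2)$ after reduction mod $2$, so the product motive and the Rost summands satisfy condition $(*)$ of Lemma 4.1 at every stage.

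The key step is the induction: I would prove that for each $1\le t\le s$ the map
\[
j_t:\Big(\bigotimes_{i=1}^{t} gr(m)^*(\bG_i/P_i)\Big)/(J_t)\to gr(m)^*(\bG_{\{1,\dots,t\}}/P_k)/J_t
\]
is injective, the base case $t=2$ being Lemma 7.2. For the inductive step I would apply Lemma 4.1 with $R'=R^{\{1,\dots,t-1\}}$, $R''=R^{\{t\}}$, $R=R^{\{1,\dots,t\}}$, $n_1=n_2=n$: one must verify the hypothesis $(**)$, namely that $p\,y_1^iy_2^j$ and $v_m y_1^iy_2^j$ do not lie in $\Img(res_{k_m})$ for the relevant monomials. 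This is where the torsion-index argument of Lemma 7.2 reappears in the multiplicative form: since $t(G_i)=2^{\ell}$ with $\ell=2^n-1$, multiplicativity of the torsion index gives $t(G_{\{1,\dots,t\}})=2^{t\ell}$, and if $2\,y_{top}^{(1,\dots,t-1)}y_{top}^{(t)}$ (the product of the top classes) were in $\Img(res_{CH})$ the torsion index would drop, a contradiction; the Quillen operation $r_{\Delta_m}$ with $r_{\Delta_m}(v_m)=2$ then rules out $v_m$ times the product as well, exactly as in the two-factor case. Together with the fact that $C_1,C_2$ are $2$-torsion and $j|C_0$ is injective, $(**)$ gives injectivity of $j_t$, and the induction closes.

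The main obstacle I anticipate is bookkeeping rather than a genuinely new idea: one must be careful that the ideal $J_s$ defined by all the ``cross'' relations $c_m(y_r^i)c_0(y_t^j)-c_0(y_r^i)c_m(y_t^j)$ for $1\le r<t\le s$ is compatible with the stepwise $J_t$'s — i.e. that the quotient by $J_{t-1}$ tensored with the new factor and then further quotiented matches $J_t$ — so that the telescoping of the $j_t$ actually yields the map in the statement; this is the analogue of the passage from $J_2$ to $J_s$ in Theorem 6.7 and follows from the explicit form of $C_2$ and $J_2$ in \S4, but it should be spelled out. Once the $j_t$ are assembled, the displayed injection of the corollary is $j_s$ after identifying $gr(m)^*(\bG_i/P_i)$ with $gr(m)^*(R^{\{i\}})$ via Lemma 5.6 (using $1\le m\le n-1$), which is legitimate since the Tate-motive summands of each $\bG_i/P_i$ contribute only torsion-free parts that split off $J$ trivially.
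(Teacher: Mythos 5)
The paper states this corollary without proof, treating it as the evident $s$-fold iteration of Lemma 7.2, and your proposal correctly identifies and generalizes the two essential inputs of that lemma: the multiplicativity of the torsion index, $t(G)=2^{s\ell}$, which rules out $2\,y^{(i_1)}\cdots y^{(i_t)}\in \Img(res_{CH})$ for any product of top classes from $t\ge 2$ distinct factors, and the Quillen operation $r_{\Delta_m}$ with $r_{\Delta_m}(v_m)=2$, which then rules out $v_m\,y^{(i_1)}\cdots y^{(i_t)}\in \Img(res_{\Omega})$. Your closing remarks about the $J_t$ versus $J_s$ bookkeeping and about passing from the Rost summands to the full $gr(m)^*(\bG_i/P_i)$ (via Lemma 5.6 and Theorem 5.1) are also the right points to flag.

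The one step that does not go through as literally written is the inductive application of Lemma 4.1 with $R'=R^{\{1,\dots,t-1\}}$. That lemma is formulated only for \emph{original} Rost motives $R'=R_{n_1}$, $R''=R_{n_2}$: its decomposition $C(R',R'')=C_0\oplus C_1\oplus C_2$ and the verification that $(**)$ forces injectivity both use the explicit presentation of $gr(m)^*(R_n)$ from Corollary 3.5. For $t\ge 3$ the motive $R^{\{1,\dots,t-1\}}$ is a generalized Rost motive whose $gr(m)^*$ is, at that stage of the induction, only known to \emph{contain} $(\otimes_{i<t}gr(m)^*(R^i))/J_{t-1}$, so the hypotheses of Lemma 4.1 are not met and the collective variable ``$y_1$'' has no meaning of the required form. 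The repair is straightforward but should replace the induction: run the argument of Lemma 4.1 directly on all $s$ factors at once, decomposing $\otimes_{i=1}^s gr(m)^+(R^i)$ into summands $C_t$ graded by the number of $c_m$-factors in a monomial $c_{\epsilon_1}(y^{(1)})\cdots c_{\epsilon_s}(y^{(s)})$, observing that modulo $J_s$ each such class restricts to $2^{a}v_m^{b}\,y^{(i_1)}\cdots y^{(i_t)}$ with $a+b=t$, and deducing injectivity from the generalized condition $(**)$ that your torsion-index and Quillen-operation computations already establish. With that reorganization the proof is complete and is exactly the argument the paper intends.
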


\begin{proof}[Proof of Corollary 1.3 in the introduction.]
Take $X=\bG/P_k$.  Let us write by $c_m(y_i)$ the element $u_m\in gr(m)^*(G_i/P_i)$
so that $c_m(y_i)=v_my_i$ in $k_m^*(\bar G_i/P_i)$. 
Then the element
\[x=c_m(y_1)c_m(y_2)...c_m(y_s)\not =0\quad in \ (\otimes_{i=1}^sgr(m)^*(G_i/P_i)))/J_s,\]
which is also nonzro in $gr(m)^*(X)/J_s\cong CH^*(X)/(I(m),J_s)$.  So $x\in T(X)^s$ and nonzero
in $CH^*(X)$.
For $G$ of type $(I)$ , the case $X=\bG/B_k$ is proved
similarly letting $m=1$. 
\end{proof}
{\bf Remark.} For each example $X$, if $x\in T(X)$, then it seems $px=0$ and $x^p=0$.

{\bf Remark.}  For each prime $p$ and $n\ge 2$,
we have the norm  variety $X=V_n$
which contains $R_n$.  Hence $T(X)\not=0$.

{\bf Question.}
Is there a geometric cellular algebraic variety  $X$ such that the following
(1) or (2)  is satisfied ? 

(1)  \quad $T(X)^s\not =0$ for some $s\ge 2$ and $p\ge 7$,

(2)\quad there is an element  $x\in T(X)$ with $x^{p}\not =0$.

\quad

\end{document}